\numberwithin{equation}{section}
\begin{document}
\centerline{\bf COX RINGS OF ALMOST HOMOGENEOUS $\mathbf{\SL_2}$-THREEFOLDS}
\bigskip

\centerline{\small ANTOINE VEZIER}
\bigskip

\begin{abstract}
We study Cox rings of normal threefolds on which $\SL_2$ acts with a dense orbit. Exploiting the method of $U$-invariants, we obtain combinatorial criteria for the total coordinate space and the base variety to have log terminal singularities. Then, we investigate the iteration of Cox rings and obtain a bound on its length. Finally, we develop a general approach to the description of the Cox ring by generators and relations which is effective for normal $\SL_2/\mu_n$-embeddings.
\end{abstract}

\section{Introduction}
The \textit{complexity} of the action of a connected reductive group is a very important birational invariant in studying the geometry of the action. This is the minimal codimension of an orbit of a Borel subgroup. The normal algebraic varieties of complexity zero are the \textit{spherical varieties}, they constitute a natural generalization of toric varieties and have a well understood geometry (\cite[Chap. 5]{TimashevBook}). The next step is the study of varieties of complexity one, which is not so well developed, except for the case of actions by tori (\cite{KKMS}, \cite{HausenIteration}, \cite{LiendoSuss}). In general, two cases are possible for a normal variety of complexity one: either it is \textit{almost homogeneous} (i.e. there exists a dense orbit), or it admits a one-parameter family of spherical orbits. The first examples of the former case consist of almost homogeneous $\SL_2$-threefolds, that is, normal $\SL_2$-varieties of dimension three with a dense orbit. Various aspects make this class of examples especially important, and one can view their study as a preliminary step toward a better understanding of general almost homogeneous varieties of complexity one. For example, these examples yield all homogeneous spaces of complexity and rank one via "parabolic induction", as shown by Panyushev in \cite{Panyushev2}.

An almost homogeneous $\SL_2$-threefold $X$ can be viewed as a \textit{normal embedding} of a homogeneous space of the form $\SL_2/F$ where $F$ is a finite subgroup. This means that the orbit morphism associated to a point in the dense orbit factors through a $\SL_2$-equivariant open immersion $\SL_2/F\xhookrightarrow{} X$. This point of view is interesting because one can take advantage of the combinatorial data defining the embedding to approach various questions. The setup of this combinatorial framework goes back to the seminal work of Luna and Vust (\cite{LunaVust}). It has been considerably clarified in the complexity zero case by Brion, Knop, Luna, Vust, and by Timashev in the complexity one case (\cite{TimashevClassification}). In fact Timashev's description doesn't restrict to almost homogeneous spaces but allows to classify varieties in any fixed equivariant birational class in terms of objects of convex geometry. The combinatorial description of normal $\SL_2/F$-embeddings has been obtained by Luna and Vust (\cite{LunaVust}) when $F$ is trivial, Moser-Jauslin (\cite{LMJ}) who extended the classification of Luna-Vust for arbitrary $F$, and Timashev (\cite{TimashevClassification}) who put these results in his framework. 

In the present work, we mainly focus on \textit{Cox rings} of normal $\SL_2/F$-embeddings. The Cox ring of a normal variety $X$, denoted $\cox(X)$, is an important invariant that encodes a lot of geometric information, see \cite{coxrings} for a comprehensive reference on this rich subject. It is the ring of global sections of the \textit{Cox sheaf}, which is, roughly speaking, the direct sum 
\begin{center}
$\Rr_X:=\bigoplus_{[\Ff]\in\clg(X)}\Ff$
\end{center}
indexed by elements of the \textit{class group} of $X$, i.e. the group of isomorphism classes of divisorial sheaves on $X$. Under mild assumptions, $\Rr_X$ can be endowed with a structure of quasi-coherent $\clg(X)$-graded $\Oo_X$-algebra, whence a structure of $\clg(X)$-graded ring on $\cox(X)$. When $\Rr_X$ is of finite type as an $\Oo_X$-algebra, its relative spectrum $\hat{X}$ is a normal $\Gamma_{\clg(X)}$-variety over $X$, where $\Gamma_{\clg(X)}$ is the diagonalizable group with character group $\clg(X)$. This is called the \textit{characteristic space} of $X$, and the structural morphism $\hat{X}\rightarrow X$ is a good quotient by $\Gamma_{\clg(X)}$. For a finitely generated Cox ring, its spectrum $\tilde{X}$ is called the \textit{total coordinate space} of $X$, and the affinization morphism $\hat{X}\rightarrow \tilde{X}$ is a $\Gamma_{\clg(X)}$-equivariant open immersion whose image has a complement of codimension $\geq 2$ in $\tilde{X}$. When $X$ is smooth, the characteristic space is a $\Gamma_{\pic(X)}$-torsor over $X$ called the \textit{universal torsor}.

Let $X$ be a normal $\SL_2/F$-embedding. In fact, we consider the \textit{equivariant Cox ring} of $X$ introduced and studied in \cite{avezier_EqCoxRings}. For the case of $\SL_2$, it is canonically isomorphic to the ordinary Cox ring (\cite[2.3.4]{avezier_EqCoxRings}). However, we take advantage of the structure of graded $\SL_2$-algebra provided by the construction of the equivariant Cox ring. Also, we occasionally use general results from \cite{avezier_EqCoxRings}. In particular, $\cox(X)$ is a finitely generated normal domain, and we use the description of the $k$-subalgebra of $U$-invariants $\cox(X)^U$, where $U$ is the unipotent part of the standard Borel subgroup $B$ of $\SL_2$. For the convenience of the reader, we recall in Section \ref{Sec_GeneralitiesComplexityOne} useful facts on normal rational varieties of complexity one and their (equivariant) Cox ring. In Section \ref{SecSL2Threefolds}, we describe the class group of $X$ by generators and relations. Then, we provide a combinatorial criterion for $X$ to have log terminal singularities, extending a previous result of Degtyarev (\cite{Degtyarev}).

The study of the Cox ring of $X$ starts in Section \ref{Sec_CharacterizingLogTerminality}. As an application of \cite[3.4.3]{avezier_EqCoxRings}, we obtain a criterion of combinatorial nature for the total coordinate space $\tilde{X}$ to have log terminal singularities. This is an interesting question, particularly since the work of Gongyo, Okawa, Sannai and Takagi characterizing varieties of Fano type via singularities of Cox rings (\cite{Gongyo}).

In Section \ref{Sec_GeoSpecialFiber}, we put to light some interesting new phenomena with regard to the \textit{special fiber}, i.e. the schematic fiber at zero of the quotient morphism
\begin{center}
$\tilde{X}\xrightarrow{//\SL_2} \AAA_k^N$,
\end{center}
where $N$ is the number of $\SL_2$-invariant prime divisors in $X$ (\cite[2.8.5]{avezier_EqCoxRings}). The general fibers of this morphism are isomorphic to the total coordinate space of the dense orbit (\cite[2.8.1]{avezier_EqCoxRings}). For a spherical variety, the quotient morphism is faithfully flat, and the special fiber is a normal affine variety which is moreover \textit{horospherical}, in the sense that the product of two irreducible representations, say $V_\lambda$, $V_\mu$, in its coordinate algebra is their Cartan product $V_{\lambda+\mu}$ (\cite[3.2.3]{Brion2007}). These nice geometric properties are important ingredients for the determination of a presentation of the Cox ring of a spherical variety in loc. cit. We show by examples that these properties do not extend to the complexity one world. Nevertheless, we give a criterion for the special fiber to be a normal variety, in terms of the basic geometry of $X$.

Pursuing our investigation, we turn to the study of the \textit{iteration of Cox rings} for $X$ in Sections \ref{SecUTorsor} and \ref{Sec_Iteration}. Roughly, iteration of Cox rings consists in studying the sequence of total coordinate spaces
\begin{center}
$...\rightarrow\tilde{X}^{(n)}\rightarrow\tilde{X}^{(n-1)}\rightarrow...\rightarrow\tilde{X}^{(2)}\rightarrow\tilde{X}$,
\end{center}
where $\tilde{X}^{(n)}$ denotes the total coordinate space of $\tilde{X}^{(n-1)}$. A basic question is whether this sequence is finite, in which case $X$ is said to have \textit{finite iteration of Cox rings}, and the last obtained Cox ring is called \textit{the master Cox ring}. By virtue of \cite[3.5.1]{avezier_EqCoxRings}, $X$ has finite iteration of Cox rings with a factorial master Cox ring. More precisely, we obtain
\begin{prop*}
The length of the iteration of Cox rings sequence of $X$ is bounded by $4$.
\end{prop*}
\noindent See the statement \ref{Prop_BoundedIteration} where more precise bounds are given depending on the finite subgroup $F\subset\SL_2$. Also, we obtain a commutative diagram
\begin{center}
\begin{tikzcd}
\tilde{X}^{(m)} \arrow[d] \arrow[r] & ... \arrow[r] & \tilde{X}^{(2)} \arrow[r] \arrow[d] & \tilde{X} \arrow[d] \\
\tilde{Y}^{(m)} \arrow[r]           & ... \arrow[r] & \tilde{Y}^{(2)} \arrow[r]           & \tilde{Y},          
\end{tikzcd}
\end{center}
where the horizontal arrows are structural morphisms of characteristic spaces, the vertical arrows are \textit{almost principal $U$-bundles}, and all squares are cartesian. Moreover, $\tilde{X}^{(m)},\tilde{Y}^{(m)}$ are factorial and $\tilde{Y}$ is the total coordinate space of a certain variety of complexity one under the standard maximal torus of $\SL_2$. 

Finally, we develop in Section \ref{SecPresentationCox} an approach for the description of the Cox ring by generators and relations. In the same way as it is important for projective geometry to have explicit homogeneous coordinates, it is an important problem to find an explicit description by generators and relations of the Cox ring. Moreover, the shape of the equations defining the total coordinate space turns out to have important arithmetic consequences for the base variety, see for example the recent preprint \cite{ManinPeyre} related to the Manin-Peyre conjecture. Our approach is effective for normal $\SL_2/\mu_n$-embeddings. This eventually leads us to compare our treatment with previous work by Batyrev and Haddad in the particular case of affine almost homogeneous $\SL_2$-threefolds (\cite{Haddad}).

\begin{ack*}
I am very grateful to Michel Brion for his help, and a very careful reading of the present work. I also thank Jürgen Hausen for interesting discussions during my stay in Tübingen, and Lukas Braun for enlightening email exchanges on a preliminary version of this paper. 
\end{ack*}

\begin{conv*}
Let $k$ denote an algebraically closed base field of characteristic zero. In this text, we work in the category of \textit{algebraic schemes}, that is, separated $k$-schemes of finite type. Hence, a morphism $(f,f^\sharp):X \rightarrow Y$ of algebraic schemes is meant to be a $k$-morphism. A \textit{variety} is an integral algebraic scheme. Without further precision, a \textit{point} of a variety is meant to be a closed point. The sheaf of \textit{regular functions} (or structure sheaf) on an algebraic scheme $X$ is denoted $\Oo_X$. A \textit{subvariety} of a variety $X$ is a locally closed subset equipped with its reduced scheme structure. The sheaf of units associated to $\Oo_X$ is denoted $\Oo_X^*$. 

In this text, an \textit{algebraic group} is an affine algebraic group scheme. The character group of an algebraic group $G$ is denoted $\hat{G}$ or $X^*(G)$. Given a finitely generated abelian group $M$, we let $\Gamma_M$ denote the \textit{diagonalizable (algebraic) group} $\spec(k[M])$ with character group $M$. Without further precision, the letter $\TT$ denotes an arbitrary torus. An \textit{almost homogeneous variety} is a normal variety on which a connected algebraic group acts with a dense orbit. 

Let $G$ be an algebraic group, $X$ an algebraic $G$-scheme, and $q:X\rightarrow  Y$ a $G$-invariant morphism. We say that $q$ is a $G$-\textit{torsor} (or \textit{principal $G$-bundle}) over $Y$ if $q$ is faithfully flat, and the natural morphism $G\times X\rightarrow X\times_Y X$ is an isomorphism. As algebraic groups are smooth in characteristic zero, $q$ is faithfully flat if and only if it is smooth and surjective. We say that $q$ is a \textit{trivial $G$-torsor} over $Y$ if there is a $G$-equivariant isomorphism $X\simeq G\times Y$ over $Y$, where $G$ acts on $G\times Y$ via left multiplication on the first factor. A fact that will be used implicitly many times in the text is that a torsor under a torus or a unipotent algebraic group is locally trivial in the Zariski topology.

Let $G$ be an algebraic group and $X$ an affine algebraic $G$-scheme. If $\Oo(X)^G$ is a finitely generated $k$-algebra, then the affine algebraic scheme $X//G:=\spec(\Oo(X)^G)$ is called the categorical quotient of $X$ by $G$. It is the universal object in the category of $G$-invariant morphisms from $X$ to affine algebraic schemes.

Our convention is that a \textit{reductive group} is a \textit{linearly reductive group}, that is, every finite dimensional representation of such group is semisimple. In particular, a reductive group is not necessarily connected. Let $G$ be a reductive group and $X$ an algebraic $G$-scheme. A \textit{good quotient} of $X$ by $G$ is an affine $G$-invariant morphism $q:X\rightarrow Y$ such that $q^\sharp$ induces an isomorphism $\Oo_Y\rightarrow (q_*\Oo_X)^G$. It is a universal object in the category of $G$-invariant morphisms from $X$ to algebraic schemes.

Let $X$ be a normal variety, the \textit{group of Weil divisors} is denoted $\wdiv(X)$. Every Weil divisor defines a coherent sheaf $\Oo_X(D)$ whose non-zero sections over an open subset $U$ are rational functions $f\in k(X)^*$ such that $\divi(f)+D$ defines an effective divisor on $U$. A \textit{divisorial sheaf} on $X$ is a coherent reflexive sheaf of rank one. The \textit{class group} (resp. \textit{Picard group}) of $X$, denoted $\clg(X)$ (resp. $\pic(X)$), is the group of isomorphism classes of divisorial sheaves (resp. invertible sheaves) on $X$. It is isomorphic to the group of Weil divisors (resp. Cartier divisors) modulo linear equivalence through the morphism $[D]\mapsto [\Oo_X(D)]$. Let $G$ be an algebraic group acting on a normal variety $X$. The \textit{equivariant class group} (resp. \textit{equivariant Picard group}) of a normal $G$-variety $X$ is denoted $\clg^G(X)$ (resp. $\pic^G(X)$), and the equivariant Cox ring is denoted $\cox^G(X)$ (see \cite[Sec. 2.2 and 2.3]{avezier_EqCoxRings}). A \textit{pointed normal variety} $(X,x)$ consists of a normal variety $X$ and a smooth point $x\in X$.  For each class $ [\Ff]\in \clg^G(X)$, there exists a canonical representative $\Ff^x$ called the \textit{rigidified $G$-linearized divisorial sheaf} associated to $[\Ff]$ (see \cite[Sec. 2.3]{avezier_EqCoxRings}).

The \textit{canonical sheaf} $\omega_X$ on a normal variety $X$ is the pushforward on $X$ of the sheaf of differental forms of maximal degree on the smooth locus $X_{sm}$. It is a divisorial sheaf, and any Weil divisor $K_X$ such that $\Oo_X(K_X)\simeq\omega_X$ is a \textit{canonical divisor}. A \textit{$\QQ$-Gorenstein variety} is a normal variety such that some non-zero power of the canonical sheaf is invertible. A \textit{Gorenstein variety} is a normal Cohen-Macaulay variety whose canonical sheaf is invertible. 

Let $X$ be a normal variety. We say that $X$ has \textit{rational singularities} if there exists a proper birational morphism
\begin{center}
$\varphi:Z\rightarrow X$,
\end{center}
where $Z$ is a smooth variety (a resolution of singularities), and such that $R^i\varphi_*\Oo_Z=0$, $\forall i>0$. This last property doesn't depend on the choice of a resolution. We say that $X$ has \textit{log terminal singularities} if the following conditions are satisfied:
\begin{itemize}[label=$\bullet$]
\item $X$ is $\QQ$-Gorenstein.
\item There is a resolution of singularities $\varphi:Z\rightarrow X$ such that
\begin{center}
$K_{Z}=\varphi^*K_X+\sum\alpha_i E_i$, $\alpha_i>-1$,
\end{center}
where the sum runs over the exceptional divisors $E_i$ of $\varphi$.
\end{itemize}
\end{conv*}

\section{Normal rational varieties of complexity one}
\label{Sec_GeneralitiesComplexityOne}

Let $G$ be a connected reductive group, and $(X,x)$ be a pointed normal $G$-variety of complexity one. Suppose moreover  that $X$ is a rational variety, so that its Cox ring is finitely generated (\cite[3.1.4]{avezier_EqCoxRings}; this is verified e.g. when $X$ is almost homogeneous). Fix a Borel subgroup $B$, a maximal torus $T$ in $B$, and denote by $U$ the unipotent part of $B$. Suppose that $G$ has trivial Picard group, so that any divisorial sheaf on $X$ admits a $G$-linearization (see \cite[2.2.2]{avezier_EqCoxRings}; this can always be achieved by replacing $G$ with a finite cover). In this section, we recall facts on the geometry of $X$ following \cite{TimashevBook}, \cite{Ponomareva}, and \cite{avezier_EqCoxRings}. 

\subsection{$B$-stable divisors}
\label{Sec_BStabledivs}

By a theorem of Rosenlicht (\cite[5.1]{TimashevBook}) there is a rational quotient
\begin{center}
$\pi:X\dashrightarrow \PP^1_k$
\end{center}
by $B$. Thus, general $B$-orbits determine a one-parameter family of $B$-stable prime divisors in $X$. This rational map is defined by two global sections $a,b$ of a rigidified $G$-linearized divisorial sheaf $\Ff^x$ on $X$ (\cite[Sec. 3.2]{avezier_EqCoxRings}). The pullback of Weil divisors on $\PP^1_k$ corresponds to the usual pullback of Cartier divisors and is given by
\begin{center}
$\pi^*:\wdiv(\PP_k^1)\rightarrow \wdiv(X), p=[\alpha:\beta]\mapsto\divi(\beta a-\alpha b)$,
\end{center}
where $[\alpha:\beta]$ are homogeneous coordinates of $p\in\PP^1_k$.
All the $B$-stable prime divisors in $X$ but a finite number lie in the image of $\pi^*$ (\cite[Sec. 3.1]{avezier_EqCoxRings}).

\begin{defn}\cite[3]{Ponomareva}
The prime divisors in $X$ lying in the image of $\pi^*$ are the \textit{parametric divisors}. The finite set of $B$-stable prime divisors that are not parametric is the set of \textit{exceptional divisors}. For an exceptional divisor $E$, the image of $\pi_{|E}$ is either dense or a point in $\PP^1_k$. In the former case, we say that $E$ \textit{dominates} $\PP^1_k$, in the latter case, the image point is called \textit{exceptional}.
\end{defn}

\subsection{The algebras $\cox^G(X)^U$ and $\cox(X)^U$}
\label{SecCoxU}

Suppose that $\Oo(X)^*\simeq k^*$, so that both $\cox^G(X)$ and $\cox(X)$ are well defined and finitely generated (\cite[3.1.4]{avezier_EqCoxRings}). We recall the description of the $k$-algebra $\cox^G(X)^U$ of $U$-invariants obtained by Ponomareva in \cite[Thm. 4]{Ponomareva} and generalized in \cite[3.2.3]{avezier_EqCoxRings}. Also, considering the canonical structure of $U$-algebra on $\cox(X)$, we recall an interpretation of $\cox(X)^U$ as the Cox ring of a complexity one $T$-variety (\cite[3.3.2]{avezier_EqCoxRings}).

\begin{nota}
Let $(x_i)_{i\in I}$ be the finite family of exceptional points with respective homogeneous coordinates $[\alpha_i:\beta_i]$. For all $i\in I$, let $(E^{x_i}_j)_j$ be the finite family of exceptional divisors that are sent to $x_i$ by $\pi$. Let $(E_k)_k$ be the finite family of exceptional divisors dominating $\PP^1_k$. Equip $\Oo(E^{x_i}_j)^x$ (resp. $\Oo(E_k)^x$) with arbitrary $G$-linearizations, and let $s_{ij}$ (resp. $s_k$) denote the canonical sections of these sheaves associated with the divisors $E^{x_i}_j$ (resp $E_k$), and let $h_{ij}$ denote the (integral) coefficient of $E^{x_i}_j$ in the divisor $\pi^*(x_i)$.
\end{nota}

\begin{thm}\label{ThmUinvCoxG}\cite[3.2.3]{avezier_EqCoxRings}
The $k$-algebra $\cox^G(X)^U$ is generated as a $k[\hat{G}]$-algebra by the elements $a,b,(s_{ij})_{ij}, (s_k)_k$. The ideal of relations contains the following identities
\begin{center}
$\beta_i a-\alpha_i b=\lambda_i\prod_j s_{ij}^{h_{ij}},$
\end{center}
where for all $i\in I$, $\lambda_i$ is a certain character of $G$. If moreover, the condition
\begin{itemize}[label=$(\star)$]
\item the common degree of the sections $a$ and $b$ is $\ZZ$-torsion free in $\clg^G(X)\times\hat{T}$
\end{itemize}
is satisfied, then the above relations generate the whole ideal.
\end{thm}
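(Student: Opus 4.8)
The plan is to present $\cox^G(X)^U$ as an explicit quotient and compare it degree by degree with the algebra itself. Introduce the polynomial algebra $R:=k[\hat{G}][A,B,(S_{ij})_{ij},(S_k)_k]$ over $k[\hat{G}]$, graded by $\clg^G(X)\times\hat{T}$ by giving each variable the bidegree (Cox-degree, $T$-weight) of the section it is meant to represent, and let $J\subset R$ be the ideal generated by the elements $\beta_iA-\alpha_iB-\lambda_i\prod_jS_{ij}^{h_{ij}}$, $i\in I$. Let $\phi\colon R/J\to\cox^G(X)^U$ be the induced graded homomorphism sending $A,B,S_{ij},S_k$ to $a,b,s_{ij},s_k$. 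The statement then splits into three parts: (i) the relations hold, so $\phi$ is well defined; (ii) $\phi$ is surjective; (iii) under $(\star)$, $\phi$ is injective.

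Parts (i) and (ii) are the routine ones, and I would settle them by divisor bookkeeping. A homogeneous element of $\cox^G(X)^U$ is a $B$-semiinvariant section of some $\Ff^x$, so its associated effective divisor is $B$-stable, hence a nonnegative combination of the parametric divisors and the exceptional divisors of \ref{Sec_BStabledivs}. Because the equivariant Cox ring is factorially graded, with the canonical sections of prime divisors as its homogeneous primes and $k[\hat{G}]$ as its homogeneous units, every such element factors, up to a character, into canonical sections of the $B$-stable primes in its support: the parametric contributions lie in $k[a,b]$ (and $a,b$ are algebraically independent since $a/b$ is nonconstant), while the exceptional ones are the $s_{ij}$ and $s_k$. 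This yields (ii). For (i), I would compute $\divi(\beta_ia-\alpha_ib)=\pi^*(x_i)=\sum_jh_{ij}E^{x_i}_j=\divi\!\bigl(\prod_js_{ij}^{h_{ij}}\bigr)$, using that no parametric divisor passes through an exceptional point together with the definition of $h_{ij}$; two $U$-invariant sections with the same divisor differ by a nowhere-vanishing $U$-invariant function, which is a character $\lambda_i$ since $\Oo(X)^*\simeq k^*$.

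The substance of the theorem is (iii). Here I would first set aside the variables $S_k$, which occur in no relation: $R/J$ is a polynomial extension in the $S_k$ of $k[\hat{G}][A,B,(S_{ij})]/J$, so it suffices to treat the latter. The relations express the pencil elements $\beta_iA-\alpha_iB$ through the exceptional monomials $M_i:=\prod_jS_{ij}^{h_{ij}}$; solving the two equations attached to a pair of exceptional points for $A,B$ turns the remaining relations into trinomial identities among the $M_i$, recovering the familiar shape of Cox rings of complexity-one $T$-varieties. To conclude I would use that $\cox^G(X)^U$ is an integral domain and that $\phi$ is a graded surjection, so that it is enough to show $R/J$ is an integral domain of the same Krull dimension as $\cox^G(X)^U$: a surjection between finitely generated $k$-domains of equal dimension has trivial kernel, since a nonzero prime strictly lowers dimension.

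I expect the main obstacle to be exactly this reduction to irreducibility and dimension, and this is where $(\star)$ is used. The common bidegree $d$ of $a$ and $b$ governs the grading of the subalgebra $k[a,b]$ and, through the relations, of the whole presentation; when $d$ is $\ZZ$-torsion free, the associated grading is free in the relevant direction, and no accidental degree coincidence can force a relation beyond the displayed ones. Concretely, torsion-freeness is the primitivity (saturation) condition that makes the trinomial locus irreducible of the expected codimension, so that $R/J$ is a domain whose dimension matches that of $\cox^G(X)^U$. The technical heart is therefore to show that $(\star)$ implies irreducibility of these trinomial relations and the absence of further syzygies, which I would establish by analyzing the sublattice of $\clg^G(X)\times\hat{T}$ generated by the degrees of the generators; without $(\star)$ the torsion produces genuinely new relations, which is why completeness is only asserted under that hypothesis.
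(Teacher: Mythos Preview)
The theorem is not proved in this paper; it is quoted from \cite[3.2.3]{avezier_EqCoxRings}, so there is no proof here to compare against. I can still assess your outline on its own merits.

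Parts (i) and (ii) are fine: the divisor computation for (i) is correct, and graded factoriality of the equivariant Cox ring yields surjectivity as you describe.

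Part (iii) has a real gap, and it lies precisely in your account of what $(\star)$ does. You assert that $(\star)$ is ``the primitivity (saturation) condition that makes the trinomial locus irreducible of the expected codimension''. But once you eliminate $A,B$ using two of the relations (when $|I|\ge 2$), the remaining relations are exactly the trinomials $g_I$ of Construction~\ref{ConsRingFactoC1}, with pairwise linearly independent column vectors because the exceptional points are distinct. The rings $R(A,P_0)$ arising this way are \emph{always} normal integral domains of the expected dimension---no saturation hypothesis enters. Likewise your relations are always a regular sequence (each new trinomial involves a fresh monomial $M_i$ in a disjoint block of variables). So $R/J$ is a domain and $\dim R/J=\dim R-|I|$ regardless of $(\star)$. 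If your dimension comparison went through as written, it would prove the conclusion unconditionally, which is more than what is claimed.

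In other words, you have not located where $(\star)$ actually enters. The mechanism is not global irreducibility but a graded-piece comparison: one identifies $(\cox^G(X)^U)_{(c,\lambda)}$ with a Riemann--Roch-type space of $B$-eigenfunctions on $X$ (equivalently, of rational functions on $\PP^1_k$ with prescribed poles) and matches its dimension against $\dim_k (R/J)_{(c,\lambda)}$. Torsion-freeness of $d=\deg a=\deg b$ in $\clg^G(X)\times\hat T$ is exactly what ensures that the ``parametric degree'' $N$ (the total degree in $a,b$) is determined by $(c,\lambda)$ modulo the sublattice spanned by the $\deg s_{ij},\deg s_k$ and $\hat G$, so that the two counts agree. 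Without $(\star)$, several values of $N$ can contribute to the same graded piece of $R/J$, and the comparison fails. Your sketch stops short of this analysis, and the substitute you propose (irreducibility of the trinomial locus) is not the right lever.
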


\begin{rem}\cite[3.2.4]{avezier_EqCoxRings}\label{Rem_ThmUinvCoxG}
Examples of situations where the condition $(\star)$ is satisfied are given by
\begin{itemize}
\item rational normal $\TT$-varieties of complexity one such that $\Oo(X)^{\TT}\simeq k$.
\item almost homogeneous varieties of complexity one.
\end{itemize}
\end{rem}

In \cite[3.4.2]{coxrings}, are studied the coordinate algebras of certain \textit{trinomial varieties}, that is, affine varieties which are the intersection in an affine space of hypersurfaces defined by trinomial equations. These algebras are obtained via a construction taking in input certain matrices $A$ and $P_0$ storing the coefficients and exponents of the trinomials. Their spectrum defines normal affine varieties of complexity one under the action of (the connected component) of a diagonalizable group, and such that the invariant regular functions are constant. Moreover, these algebras turn out to be Cox rings of varieties of complexity one under the action of a torus \cite[3.4.3]{coxrings}. 

\begin{cons}\cite[3.4.2.1]{coxrings}\label{ConsRingFactoC1}
Fix integers $r\in\ZZ_{\geq 1}$, $m\in\ZZ_{\geq 0}$, a sequence of integers $n_0,...,n_r\in\ZZ_{\geq 1}$, and let $n:=n_0+...+n_r$. Consider as inputs

\begin{itemize}[label=$\bullet$]
\item A matrix $A:=[a_0,...,a_r]$ with pairwise linearly independent column vectors $a_0,...,a_r\in k^2$.
\item An $r\times (n+m)$ block matrix  $P_0:=[L\,\,\, 0_{r,m}]$, where $L$ is an  $r\times n$ matrix  built from the $n_i$-tuples $l_i:=(l_{i1},...,l_{in_i})\in\ZZ^{n_i}_{\geq 1}$, $0\leq i\leq r$, called \textit{exponent vectors}, as below
\[
L = \begin{bmatrix} 
    -l_0& l_1 & \dots & 0 \\
    \vdots & \vdots & \ddots & \vdots \\
    -l_0 &  0  &  \dots   & l_r 
    \end{bmatrix}
\]
\end{itemize}
Now consider the polynomial algebra $k[T_{ij},S_k]$, where $0\leq i\leq r$, $1\leq j\leq n_i$, and $1\leq k\leq m$. For every $0\leq i\leq r$, define a monomial
\begin{center}
$T_i^{l_i}:=T_{i1}^{l_{i1}}...T_{in_i}^{l_{in_i}}$,
\end{center}
whence the name "exponent vector".
Denote $\mathscr{I}$ the set of triples $(i_1,i_2,i_3)$ with $0\leq i_1< i_2< i_3\leq r$, and for all $I\in\mathscr{I}$, consider the trinomial
\[g_I:=\det \begin{bmatrix} 
    T_{i_1}^{l_{i_1}}& T_{i_2}^{l_{i_2}} & T_{i_3}^{l_{i_3}} \\
    a_{i_1} &  a_{i_2}  &  a_{i_3} 
    \end{bmatrix}.
\]
We introduce a grading on $k[T_{ij},S_k]$ by the abelian group $K_0:=\ZZ^{n+m}/\Im(^tP_0)$, where $^tP_0$ is the transpose of  $P_0$. Let $Q_0:\ZZ^{n+m}\rightarrow K_0$ be the projection, and set
\begin{center}
$\deg T_{ij}:=Q_0(e_{ij})$, and $\deg S_k:=Q_0(e_k)$,
\end{center}
where $(e_{ij}, e_k)$ is the standard basis of $\ZZ^{n+m}$. Finally consider the $K_0$-graded $k$-algebra
\begin{center}
$R(A,P_0):=k[T_{ij},S_k]/(g_I)_{I\in\mathscr{I}}$.
\end{center}
\end{cons}

\begin{prop}\cite[3.3.2]{avezier_EqCoxRings}\label{Prop_RAP0}
Suppose that the condition $(\star)$ of \ref{ThmUinvCoxG} is satisfied. Then, the $k$-algebra $\cox(X)^U$ is isomorphic to an algebra $R(A,P_0)$ constructed as in \ref{ConsRingFactoC1}. 
\end{prop}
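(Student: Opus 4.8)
The plan is to take the presentation of the \emph{equivariant} algebra $\cox^G(X)^U$ furnished by Theorem \ref{ThmUinvCoxG} and, after eliminating the generators $a,b$, transport it into the trinomial presentation of Construction \ref{ConsRingFactoC1}. First I would pass from $\cox^G(X)^U$ to $\cox(X)^U$: by the relation between the equivariant and ordinary Cox rings (\cite[Sec. 2.2--2.3]{avezier_EqCoxRings}), on $U$-invariants this passage amounts to specializing the $k[\hat{G}]$-factor, i.e. setting the characters of $G$ to $1$. Under $(\star)$, Theorem \ref{ThmUinvCoxG} then presents $\cox(X)^U$ as the $k$-algebra generated by $a,b,(s_{ij}),(s_k)$ subject to the relations $\beta_i a-\alpha_i b=\prod_j s_{ij}^{h_{ij}}$ (the characters $\lambda_i$ now specializing to $1$), and these relations generate the whole ideal. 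Writing $M_i:=\prod_j s_{ij}^{h_{ij}}$, the essential structural observation is that each $M_i$ is thereby identified with the linear form $\ell_i:=\beta_i a-\alpha_i b$ lying in the two-dimensional space $\langle a,b\rangle$; hence any three of the $M_i$ are linearly dependent, and this dependence is exactly what the trinomials will record.

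Next I would eliminate $a,b$. Re-indexing the exceptional points as $x_0,\dots,x_r$ and, if there are fewer than two of them, adjoining one or two generic points of $\PP^1_k$ (for which $M_i$ is a single irreducible section with exponent $1$), one arranges at least two distinct marked points $x_0,x_1$. Since $\det[(\alpha_0,\beta_0),(\alpha_1,\beta_1)]\neq 0$, the relations for $i=0,1$ solve $a,b$ as $k$-linear combinations of $M_0,M_1$, so $\cox(X)^U$ is generated by the $s_{ij},s_k$ alone. Substituting these expressions into the remaining relations $\ell_i=M_i$ ($i\geq 2$) turns each into a linear dependence among $M_0,M_1,M_i$, that is, into the vanishing of the determinant $g_{0,1,i}$ of Construction \ref{ConsRingFactoC1}, under the dictionary $T_{ij}=s_{ij}$, $S_k=s_k$, $l_i=(h_{ij})_j$, $a_i=(\alpha_i,\beta_i)$ (the pairwise linear independence of the $a_i$ being precisely the distinctness of the $x_i$). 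Indeed, substituting $T_i^{l_i}\mapsto\ell_i$ into $g_{i_1i_2i_3}$ makes its first row a combination of the rows $(\alpha_{i_t})_t$ and $(\beta_{i_t})_t$, so $g_{i_1i_2i_3}=0$. A direct computation with the Grassmann--Plücker relations for $2\times 2$ minors (equivalently, a standard property of the construction in \cite[3.4.2]{coxrings}) then gives $(g_{0,1,i})_{i\geq 2}=(g_I)_{I\in\mathscr{I}}$, so the relation ideal of $\cox(X)^U$ in $k[T_{ij},S_k]$ is exactly $(g_I)_{I\in\mathscr{I}}$.

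It then remains to match the gradings. Since $\cox(X)^U$ is generated by the $s_{ij},s_k$, its grading group is generated by their degrees, and the only relations among these degrees are $\sum_j h_{ij}\deg(s_{ij})=\delta$ for all $i$, where $\delta$ is the common degree of $a,b$; these are precisely the rows of ${}^tP_0$. Hence the grading group is $K_0=\ZZ^{n+m}/\Im({}^tP_0)$ with $\deg(s_{ij})=Q_0(e_{ij})$ and $\deg(s_k)=Q_0(e_k)$, and the assignment $T_{ij}\mapsto s_{ij}$, $S_k\mapsto s_k$ is a graded $k$-algebra isomorphism $R(A,P_0)\xrightarrow{\sim}\cox(X)^U$ for $A=[(\alpha_i,\beta_i)]_{0\leq i\leq r}$ and exponent vectors $l_i=(h_{ij})_j$.

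I expect the main obstacle to be the bookkeeping that pins down the relation ideal as exactly $(g_I)_{I\in\mathscr{I}}$ together with the correct grading: one must combine the ``generation of the whole ideal'' half of Theorem \ref{ThmUinvCoxG} (where $(\star)$ is indispensable) with the elimination of $a,b$ and the Plücker reduction, while simultaneously verifying that the degree relations reproduce $\Im({}^tP_0)$ and handling the degenerate cases of few exceptional points through the adjunction of generic marked points. The underlying geometric idea --- that the sections $M_i$ all live in the pencil $\langle a,b\rangle$, so their triple dependencies are governed by $2\times 2$ minors --- is transparent; the work lies in making the passage of presentations and gradings rigorous.
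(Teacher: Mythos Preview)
The paper does not prove this proposition: it is quoted from \cite[3.3.2]{avezier_EqCoxRings}, and the only accompanying content in the present text is Remark~\ref{Rem_Inputdata}, which records the geometric dictionary for the input data. Your sketch is a correct reconstruction of how such a proof goes, and your dictionary at the end (the $a_i$ are the homogeneous coordinates of the exceptional points, the exponent vectors are the multiplicity tuples in $\pi^*(x_i)$, and $m$ counts the exceptional divisors dominating $\PP^1_k$) matches that remark exactly. The central step---use two of the relations $\beta_i a-\alpha_i b=M_i$ to eliminate $a,b$, rewrite the remaining ones as the trinomials $g_{0,1,i}$, and invoke the Pl\"ucker identities for $2\times 2$ minors to obtain $(g_{0,1,i})_{i\geq 2}=(g_I)_{I\in\mathscr I}$---is sound, as is the device of adjoining generic points of $\PP^1_k$ (with exponent vector $(1)$) to ensure $r\geq 1$.

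Two minor points of precision. First, the passage from $\cox^G(X)^U$ to $\cox(X)^U$ is not literally ``specializing a $k[\hat G]$-tensor factor'': under the standing hypothesis $\Oo(X)^*\simeq k^*$ one has $\ker(\clg^G(X)\to\clg(X))=\hat G$, and the correct statement is that $\cox(X)$ is the quotient of $\cox^G(X)$ by the ideal generated by $\chi-1$ for $\chi\in\hat G$; this does send each $\lambda_i$ to $1$, but for that structural reason rather than because of a tensor decomposition (which would require the class-group sequence to split). Second, your grading paragraph asserts more than the proposition: the statement only claims a $k$-algebra isomorphism, and showing that the relations among the degrees of the $s_{ij},s_k$ in the grading group of $\cox(X)^U$ are \emph{exactly} the rows of ${}^tP_0$ would need a separate argument about $\clg(X)\times\hat T$ that you have not supplied. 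Neither point affects the correctness of your main line.
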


\begin{rem}\label{Rem_Inputdata}
By \cite[3.3.2]{avezier_EqCoxRings}, the input data to be used in Construction \ref{ConsRingFactoC1} in order to obtain $\cox(X)^U$ can interpreted geometrically:
\begin{itemize}[label=$\bullet$]
\item $m$ is the number of exceptional divisors in $X$ dominating $\PP^1_k$,
\item $a_0,...,a_r$ are homogemeous coordinates on $\PP^1_k$ of the exceptional points $x_0,...,x_r$.
\item the exponent vectors are the vectors formed by the multiplicities of the exceptional divisors in the pullbacks $\pi^*(x_i)$, $i=0,...,r$.
\end{itemize}
\end{rem}

\begin{rem}\label{Rem_Platonic}
Consider the spectrum $\tilde{Y}$ of an algebra $R(A,P_0)$ viewed as the total coordinate space of a normal rational variety $Y$ of complexity one under a torus action. The geometries of $\tilde{Y}$ and $Y$ highly depend on the exponent vectors $l_i$, $i=0,...,r,$ involved in the equations. In \cite{HausenIteration}, the study of singularities on $Y$ leads to the following notion that will be used later: a ring $R(A,P_0)$ is a \textit{Platonic ring} either if $r\leq 1$, or if every tuple $(l_{0i_0},...,l_{ri_r})$ is \textit{Platonic}, i.e. after ordering it decreasingly, the first triple is one of the \textit{Platonic triples}
\begin{center}
$(5,3,2),(4,3,2),(3,3,2),(x,2,2),(x,y,1)$, $x\geq y\geq 1$,
\end{center}
and the remaining integers of the tuple equal one.
\end{rem}

\subsection{Combinatorial classification}
\label{SecCombinatorialMaterial}

In this section, we present the combinatorial framework for the classification of complexity one normal rational $G$-varieties lying in a fixed $G$-birational class. Our reference for this material is \cite{TimashevBook}. Let $K$ be the field of rational functions of $X$. Recall that a \textit{geometric valuation} of $K$ is a discrete valuation $K^*\rightarrow\QQ$ of the form $\alpha v_D$, where $\alpha\in\QQ_+$, and $v_D$ is the normalized discrete valuation associated to a prime divisor $D$ in a normal variety $Z$ whose field of fractions is identified with $K$.

\begin{nota}
We set $\Dd=\Dd(K)$ for the set of prime divisors of $X$ that are not $G$-stable, and $\Dd^B$ the subset of $B$-stable ones. This last subset consists of the so-called \textit{colors} of $K$. These sets don't depend on $X$ up to $G$-birational equivalence and we identify them with the corresponding sets of normalized geometric valuations of $K$. Let $\Vv$ denote the set of $G$-valuations of $K$, that is, the geometric $G$-invariant valuations of $K$. Let $\Vv(X)$ denote the subset of normalized $G$-valuations corresponding to $G$-stable prime divisors of $X$. The element of $\Vv(X)$ corresponding to a $G$-stable prime divisor $D$ is denoted $v_D$.
\end{nota}

There is an exact sequence of abelian groups
\begin{center}
$1\rightarrow (K^B)^*\simeq k(\PP^1_k)^*\rightarrow K^{(B)}\rightarrow \Lambda(X) \rightarrow 1$,
\end{center}
where $K^{(B)}$ is the multiplicative group of rational $B$-semi-invariant functions and $\Lambda(X)$ is the associated group of weights. The abelian group $\Lambda(X)$ is the \textit{weight lattice} of the $G$-variety $X$. It is a subgroup of $\hat{T}$ whose rank is the \textit{rank} of the $G$-variety $X$. After choosing a splitting $\lambda\mapsto f_\lambda$ of the above sequence, we view $\Lambda(X)$ as a submodule of $K^{(B)}$. Then, considering a geometric valuation $v$ of $K$, the restriction $v_{|K^{(B)}}$ is determined by a triple $(x,h,l)$, where $x\in\PP^1_k$, $h\in\QQ_+$, and $l\in \Ee:=\Hom(\Lambda(X),\QQ)$. Indeed, the restriction of $v$ to $k(\PP^1_k)$ is again a geometric valuation \cite[B.8]{TimashevBook}, hence of the form $hv_x$, where $x\in\PP^1_k$. On the other hand, $v_{|\Lambda(X)}$ yields an element $l\in\Ee$.

\begin{defn}
For all $x\in \PP^1_k$, consider the closed half-space $\Ee_{x}=\QQ_+\times \Ee$. The \textit{hyperspace} $\breve{\Ee}$ associated to $K$ is the union of the $\Ee_{x}$ glued together along $\Ee$.
\end{defn}

The set $\Vv$ embeds in $\breve{\Ee}$, and $\Vv_x:=\Vv\cap\Ee_x$ is a simplicial convex polyhedral cone for all $x\in\PP^1_k$. Also, the natural map $\varrho:\Dd^B\rightarrow\breve{\Ee}$ is not injective in general. Notice that from the preceding section, all but a finite number of $B$-stable prime divisors are sent in $\breve{\Ee}$ to a vector of the form $\varepsilon_x:=(x,1,0)\in \Ee_x$ for a certain $x\in \PP^1_k$. 

\begin{defn}
The pair $(\Vv,\Dd^B)$ is the \textit{colored equipment} of $K$. We say that $(\breve{\Ee},\Vv,\Dd^B,\varrho)$ is the \textit{colored hyperspace} of $K$.
\end{defn}

\begin{defn}\label{DefHypercone}
A \textit{cone} in $\breve{\Ee}$ is a cone in some $\Ee_x$, $x\in\PP^1_k$. A \textit{hypercone} in $\breve{\Ee}$ is a union $\Cc=\cup_{x\in\PP_k^1}\Cc_x$ of convex cones, each generated by a finite number of vectors, and such that
\begin{enumerate}
\item $\Cc_x=\Kk+\QQ_+\varepsilon_x$ for all $x\in\PP^1_k$ but a finite number, where $\Kk:=\Cc\cap\Ee$.
\item One of the following cases occurs
 \begin{description}
  \item (A) $\exists x\in\PP^1_k$, $\Cc_x=\Kk$.
  \item (B) $\emptyset\neq \Bb:=\sum\Bb_x\subset\Kk$, where $\varepsilon_x+\Bb_x=\Cc_x\cap(\varepsilon_x+\Ee)$.
\end{description}
\end{enumerate}
The hypercone is said of \textit{type} (A) (resp. (B)) depending on the alternative of condition $2$.
The hypercone is said \textit{strictly convex} if  each $\Cc_x$ is, and $0\notin \Bb$.
\end{defn}

\begin{defn}\label{DefHyperconeEngendre}
Let $\Qq\subset \breve{\Ee}$ a subset all of whose elements but a finite number are of the form $\varepsilon_x$ for some $x\in\PP^1_k$. Let $\varepsilon_x+\Pp_x$ the convex hull of intersection points of the half-lines $\QQ_+q, q\in\Qq$ with $\varepsilon_x+\Ee$. We say that the hypercone $\Cc=\Cc(\Qq)$, where the $\Cc_x$ are generated by $\Qq\cap\Ee_x$ and $\Pp:=\sum\Pp_x$, is \textit{generated} by $\Qq$.
\end{defn}
The fundamental result is that strictly convex hypercones $(\Cc,\Rr):=\Cc(\Ww\cup\varrho(\Rr))$, where $\Ww\subset\Vv$, $\Rr\subset\Dd^B$, and $0\notin\varrho(\Rr)$, classify affine $B$-stable open subvarieties of normal $G$-models of $K$ (the so-called \textit{$B$-charts}). These hypercones are called \textit{colored hypercones}.

\begin{rem}\label{RemBChartTypeB}
Let $X_0$ be a $B$-chart defined by a colored hypercone $(\Cc,\Rr)$.  Then $\Oo(X_0)^B\simeq k$ if and only if $(\Cc,\Rr)$ is of type (B).
\end{rem}

A finite set of colored hypercones $(\Cc,\Rr)$ defines $B$-charts $\mathring{X}_i$ and $G$-models $X_i:=G\mathring{X}_i$. These $G$-models can be glued together into a $G$-model if and only if these colored hypercones defines a \textit{colored hyperfan}. In turn, the colored hyperfans classify normal $G$-models of $K$. To make this last notion precise we need the notion of a \textit{hyperface} of a hypercone, which is defined through the notion of a \textit{linear functional} on the hyperspace. We can think of the abelian group $K^{(B)}$ as the dual object to the hyperspace. Indeed, every $f=f_0f_\lambda\in K^{(B)}$ defines a so-called linear functional on $\breve{\Ee}$, namely the restriction to each $\Ee_x$ is a $\QQ$-linear form ($(h,\gamma)\mapsto h v_x(f_0)+\gamma(\lambda)$). Conversely, considering a linear functional on $\breve{\Ee}$, a multiple of it is given by a rational $B$-semi-invariant function.
\begin{defn}
A \textit{face} of a hypercone $\Cc$ is a face $\Cc'$ of a certain cone $\Cc_x$ such that $\Cc'\cap\Bb=\emptyset$. A \textit{hyperface} of $\Cc$ is a hypercone $\Cc'=\Cc\cap\ker\varphi$, where $\varphi$ is a linear functional on $\Cc$ such that $\varphi(\Cc)\geq 0$. If $\Cc$ is of type B, its \textit{interior} is by definition $\interior \Cc:=\cup_{x\in\PP^1_k}\interior\Cc_x\cup\interior\Kk$.
\end{defn}

\begin{defn}
A colored hypercone $(\Cc,\Rr)$ of type (B) is \textit{supported} if $\interior \Cc\cap \Vv\neq \emptyset$. A \textit{(hyper)face} of $(\Cc,\Rr)$ is a colored (hyper)cone $(\Cc',\Rr')$, where $\Cc'$ is a (hyper)face of $\Cc$, and $\Rr'=\Rr\cap\varrho^{-1}(\Cc')$. A \textit{colored hyperfan} is a set of supported colored cones and hypercones of type (B) whose interiors are disjoint inside $\Vv$, and which is obtained as the set of all supported colored (hyper)faces of finitely many colored hypercones.
\end{defn}

An interesting feature of this classification framework is that the lattice of $G$-stable subvarieties of $X$ can be read from the combinatorial representation of $X$ as a colored hyperfan. Indeed, consider a $G$-stable subvariety $Y\subset X$, and a $B$-chart $\mathring{X}$ intersecting $Y$ which is defined by a hypercone $\Cc(\Ww\cup \varrho(\Rr))$. Denote $\Vv_Y\subset \Ww$, $\Dd_Y^B\subset \Rr$ the respective subsets which correspond to the $B$-stable prime divisors in $\mathring{X}$ whose closure in $X$ contains $Y$, and let $\Cc_Y$ be the (hyper)cone spanned by $\Vv_Y\cup\varrho(\Dd_Y^B)$. Then for any other $G$-stable subvariety $Z$, we have $Y\subset \overline{Z}$ if and only if $(\Cc_Z,\Dd^B_Z)$ is a (hyper)face of $(\Cc_Y,\Dd^B_Y)$.

\subsection{Structure of $B$-charts}
\label{Sec_LocStructure_BChart}

The local structure theorem of Brion, Luna and Vust (\cite[Thm 1.4]{BLV}) is a very useful tool for the study of varieties with group action, the following variant is due to Knop.

\begin{thm}\cite[1.2]{Knop93}\label{ThmLocalStruct1}
Let $G$ a connected reductive group, $X$ a $G$-variety equipped with an ample $G$-linearized invertible sheaf $\Ll$, and $s\in\Ll(X)^{(B)}$ a $B$-semi-invariant section. Let $P$ the parabolic subgroup stabilizing $s$ in $\PP(\Ll(X))$, with Levi decomposition $P_u\rtimes L$, $T\subset L$. Then the open subvariety $X_s$ is $P$-stable, and there exists a closed $L$-stable subvariety $Y\subset X_s$ such that the morphism induced by the $P$-action
\begin{center}
$P_u\times Y\rightarrow X_s$
\end{center}
is a $P$-equivariant isomorphism.
\end{thm}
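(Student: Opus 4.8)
The plan is to extract the slice $Y$ from the two halves of the Levi decomposition $P=P_u\rtimes L$ separately: the reductivity of $L$ will furnish equivariant complements and a fixed point, while the unipotence of $P_u$ will control its orbits on the affine variety $X_s$. First I would dispatch the formal part. Since $P$ fixes the point $[s]\in\PP(\Ll(X))$ it stabilizes the line $ks$, and as $P$ acts linearly on $\Ll(X)$ each $p\in P$ satisfies $p\cdot s=\kappa(p)\,s$ for a character $\kappa\in X^*(P)$. The zero locus of $p\cdot s$ is $p\cdot(X\setminus X_s)$ while that of $\kappa(p)\,s$ equals $X\setminus X_s$; comparing gives $p\cdot X_s=X_s$, so $X_s$ is $P$-stable. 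Ampleness of $\Ll$ moreover makes $X_s$ affine, being the complement of the support of an effective ample divisor.

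Next I would pass to a linear model. Replacing $\Ll$ by a power (which alters neither $X_s$ nor $P$) I may assume $\Ll$ very ample and fix the resulting $G$-equivariant immersion $X\hookrightarrow\PP(V)$, $V:=\Ll(X)^*$, under which $s$ becomes a $B$-eigenform $\ell\in V^*$ of weight $\chi$ with $P=\mathrm{Stab}_G([\ell])$ and $X_s=X\cap\PP(V)_\ell$. The chart $\PP(V)_\ell$ is identified with the affine space $A:=\{v\in V:\ell(v)=1\}$, modelled on $\ker\ell$. As $P_u\subset\ker\chi$, it preserves $A$ and acts there by restrictions of linear maps, whereas $L$ acts on $A$ through the twisted linear action $l\ast v:=\chi(l)(l\cdot v)$, under which $\ell$ is $L$-invariant and $\ker\ell$ becomes an $L$-submodule; thus $A$ is an affine $L$-space and $X_s\subset A$ is a $P$-stable closed subvariety. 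It then suffices to prove the statement for $A$ itself: if $P_u\times Y_0\xrightarrow{\sim}A$ with $Y_0$ closed and $L$-stable, then $X_s$, being $P_u$-stable, is a union of $P_u$-orbits, whence $X_s=P_u\cdot(X_s\cap Y_0)$ and $Y:=X_s\cap Y_0$ is the desired closed $L$-stable slice.

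The construction of $Y_0\subset A$ is where reductivity enters. The affine $L$-action on $A$ has linear part the $L$-module $\ker\ell$ and a translation cocycle; since $L$ is linearly reductive in characteristic zero, $H^1(L,\ker\ell)=0$, so the cocycle is a coboundary and $A$ carries an $L$-fixed point $y_\ast$. Relative to $y_\ast$ the $L$-action is linear, and the orbit-direction map $\mathfrak p_u\to\ker\ell,\ \xi\mapsto\xi\cdot y_\ast$, is equivariant for $\mathrm{Ad}$ on the source and the $L$-module structure on the target; its image $I$ is therefore an $L$-submodule. Choosing an $L$-stable complement $\ker\ell=I\oplus N$ and setting $Y_0:=y_\ast+N$ produces a closed $L$-stable affine subspace.

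The crux, and the main obstacle, is to prove that $\mu:P_u\times Y_0\to A,\ (u,y)\mapsto u\cdot y$, is an isomorphism. By construction the differential at $(e,y_\ast)$ is the isomorphism $I\oplus N\xrightarrow{\sim}\ker\ell$, and $P_u$-equivariance (for left translation on the first factor) propagates étaleness. The essential input is that $P_u$ acts \emph{freely} on $A$: here I would use that $\ell$ is a highest weight vector, reducing to the irreducible submodule $W_\chi:=\langle G\ell\rangle\subset V^*$ via the equivariant surjection $V\twoheadrightarrow W_\chi^{*}$ (which carries $\{\ell\neq0\}$ into the nonvanishing locus of the extremal functional $\ell$), where the extremal-weight chart is $P_u$-free; freeness on $A$ follows since $\xi\cdot v=0$ forces the image of $\xi$ to annihilate a vector in that locus, hence $\xi=0$. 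Combined with Kostant–Rosenlicht (orbits of the unipotent group $P_u$ on the affine $A$ are closed) and the Zariski-local triviality of unipotent torsors recalled in the conventions, this exhibits $A\to A/P_u$ as a $P_u$-torsor onto which $Y_0$ maps bijectively; $\mu$ is then a bijective étale morphism of smooth varieties, hence an isomorphism in characteristic zero. Restricting to $X_s$ as above finishes the proof. The delicate point is precisely this assembly of freeness, surjectivity, and the upgrade from a bijective morphism to an isomorphism, and it is here that all the hypotheses—ampleness (affineness), $B$-semi-invariance with dominant highest weight (freeness), and characteristic zero (both the $L$-complement and the final upgrade)—are genuinely used.
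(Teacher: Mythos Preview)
The paper does not prove this theorem; it merely quotes it from Knop (\cite[1.2]{Knop93}), so there is no in-paper argument to compare against. Your write-up must therefore be judged on its own.

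Most of your outline is sound: the $P$-stability of $X_s$, affineness from ampleness, the passage to the linear model $A=\{\ell=1\}$, the existence of an $L$-fixed point $y_\ast$ via $H^1(L,\ker\ell)=0$, and the freeness of the $P_u$-action on $A$ by projecting to the simple quotient generated by $\ell$ are all standard and correctly handled. The reduction ``if the theorem holds for $A$ it holds for the closed $P$-stable subvariety $X_s\subset A$'' is also fine.

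The gap is in the final step. From \'etaleness of $\mu:P_u\times Y_0\to A$ at $(e,y_\ast)$ and $P_u$-equivariance you only get \'etaleness along $P_u\times\{y_\ast\}$, not on all of $P_u\times Y_0$: at $(e,y)$ the differential is $(\xi,n)\mapsto \xi\cdot y+n$, and nothing you wrote forces $(\mathfrak p_u\cdot y)\cap N=0$ for $y\neq y_\ast$. Likewise, the assertion that ``$Y_0$ maps bijectively onto $A/P_u$'' is unsupported: you have only arranged $\dim Y_0=\dim A-\dim P_u$ and \'etaleness at one point, which does not imply that the affine subspace $Y_0$ meets every $P_u$-orbit exactly once. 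Both defects concern precisely the ``assembly'' you flag as delicate, and they are genuine.

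The customary proof avoids this by producing a $P$-equivariant morphism $\pi:A\to P_u$ (equivalently $X_s\to P/L$) \emph{a priori}, rather than choosing a candidate slice and checking bijectivity afterwards. One uses the $G$-equivariant projection $V\twoheadrightarrow V_0$ onto the simple quotient for which $\ell$ is the highest weight functional; on the resulting chart $A_0\subset V_0$ the closed $G$-orbit in $\PP(V_0)$ meets $\{\ell\neq0\}$ exactly in the big cell, on which $P_u$ acts simply transitively, and this yields $\pi$. Then $Y:=\pi^{-1}(e)$ is automatically the slice, with explicit inverse $a\mapsto(\pi(a),\pi(a)^{-1}\cdot a)$, so no separate bijectivity or global \'etaleness argument is needed. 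If you want to keep your slice-first approach, you would need an extra ingredient (for instance a one-parameter subgroup in $Z(L)$ contracting $A$ to $y_\ast$ with strictly positive weights, giving a Bia{\l}ynicki-Birula-type control) to force $Y_0$ to be a genuine section.
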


Consider a $B$-chart $V\subset X$, the $G$-variety $X_0:=GV$, and the $B$-stable effective divisor $D:=X_0\setminus V$. By \cite[Lemma 2]{TimashevCartier}, $\Oo_{X_0}(D)$ is an ample ($G$-linearized) line bundle on $X_0$. The canonical section associated with $D$ is $B$-semi-invariant. Thus, its projective stabilizer $P$ is a parabolic subgroup containing $B$. Then, the above theorem yields a $P$-equivariant isomorphism
\begin{center}
$P_u\times Y\rightarrow V$.
\end{center}

\begin{cor}\label{Cor_LocalStructThm}
Suppose that $G=\SL_2$ and $V$ is not $G$-stable. Then, there is a $B$-equivariant isomorphism
\begin{center}
$U\times Y\rightarrow V$,
\end{center}
where $Y$ is a normal affine $T$-variety of complexity one.
\end{cor}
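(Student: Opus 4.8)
The plan is to feed the statement into the general discussion preceding it, which already applies Knop's local structure theorem (Theorem \ref{ThmLocalStruct1}) to the canonical section of $\Oo_{X_0}(D)$ to produce a parabolic subgroup $P\supseteq B$ together with a $P$-equivariant isomorphism $P_u\times Y\to V$, where $Y$ is a closed $L$-stable subvariety and $X_s=X_0\setminus D=V$. For $G=\SL_2$ the only parabolic subgroups containing $B$ are $B$ itself and $\SL_2$, so the first task is to exclude $P=\SL_2$. In that case the unipotent radical $P_u$ is trivial, the isomorphism degenerates to $Y\simeq V$, and $V=X_s$ is forced to be $P$-stable, i.e. $\SL_2$-stable, contradicting the hypothesis that $V$ is not $G$-stable. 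Hence $P=B$, so $P_u=U$ and $L=T$, yielding the desired $B$-equivariant isomorphism $U\times Y\to V$ with $Y$ a closed $T$-stable subvariety.

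Next I would check that $Y$ has the stated structure. Being a $B$-chart, $V$ is affine, and $Y\simeq\{e\}\times Y$ is closed in $V$, hence affine. Since $V$ is open in the normal variety $X$ it is itself a normal variety, in particular irreducible; as $U\times Y\simeq V$ is then irreducible, so is $Y$, making it a genuine $T$-variety for the residual action of $T=L$. Normality descends along the $U$-factor by an elementary argument: because $U\simeq\AAA^1_k$ we have $\Oo(V)\simeq\Oo(Y)[t]$, and a polynomial ring $R[t]$ over a domain $R$ is integrally closed precisely when $R$ is; since $\Oo(V)$ is a normal domain, $\Oo(Y)$ is integrally closed, so $Y$ is normal.

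The remaining and most substantive point is that the $T$-action on $Y$ has complexity one, and this is where I would concentrate the argument. Transporting the $B$-action through the isomorphism, an element $b=u't\in B$ (with $u'\in U$, $t\in T$) sends $(u,y)\mapsto(u'\cdot tut^{-1},\,t\cdot y)$; since left multiplication by $U$ is transitive on $U$, the $B$-orbit of $(u,y)$ is exactly $U\times(T\cdot y)$, of dimension $1+\dim(T\cdot y)$. As $V$ is dense in $X$ and $X$ has complexity one under $\SL_2$, the generic $B$-orbit has dimension $\dim X-1$, so for generic $y$ one gets $\dim(T\cdot y)=\dim X-2$. Combining with $\dim Y=\dim V-1=\dim X-1$ shows that the generic $T$-orbit has codimension one in $Y$, which is precisely complexity one. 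The crux of the proof is thus this orbit-dimension bookkeeping linking the complexity of $X$ under $\SL_2$ to that of $Y$ under $T$; the identification $P=B$ and the normality of $Y$ follow formally from the classification of parabolics in $\SL_2$ and descent along the $\AAA^1_k$-factor.
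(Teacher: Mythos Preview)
Your proof is correct and is precisely the argument the paper leaves implicit: the corollary is stated without proof immediately after the setup paragraph applying Theorem~\ref{ThmLocalStruct1}, and your derivation (excluding $P=G$ via the $G$-stability hypothesis, then reading off $P_u=U$, $L=T$, and checking affineness, normality, and complexity of $Y$) is exactly how one fills in the details. The only remark is that your exclusion of $P=G$ can be phrased even more directly from the setup: the projective stabilizer $P$ of the canonical section of $D=X_0\setminus V$ equals $G$ iff $D$ is $G$-stable iff $V$ is $G$-stable, which is ruled out by hypothesis; but your version via the $P$-stability of $X_s$ is equally valid.
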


\section{Almost homogenous $\SL_2$-threefolds}
\label{SecSL2Threefolds}

\subsection{Generalities}

\label{Sec_SL2Threefolds_Gen}

Consider the algebraic group $G:=\SL_2$ identified with its group of rational points, namely the matrices
\[
g=\begin{bmatrix}
    g_1 & g_2 \\
    g_3 & g_4
    \end{bmatrix}
\]
such that $g_i\in k$ and $\det(g)=1$. By abuse of notation, we also let $g_i$ denote the corresponding matrix coordinates on $G$. Consider also the Borel subgroup $B$ whose elements are upper triangular matrices, and the maximal torus $T\subset B$ whose elements are diagonal matrices. The character group of $T$ is free of rank one generated by
\[
\omega:=(\begin{bmatrix}
    t & 0 \\
    0 & t^{-1}
    \end{bmatrix}\mapsto t).
\]
In the sequel, we fix a normal embedding $i:G/F\hookrightarrow X$, where $F$ is a finite subgroup of $G$. This means that there exists a point $x$ in the $G$-variety $X$ such that the associated orbit morphism factors through a $G$-equivariant open immersion $i:G/F\xhookrightarrow{} X$. In the sequel, we identify $G/F$ with this dense orbit. It is well known that $F$ is conjugate either to a cyclic subgroup $\mu_n\subset T$ of order $n\geq 1$, or to one of the famous binary polyhedral groups (\cite[4.4]{SpringerInvariantTheory}):
\begin{itemize}[label=$\bullet$]
\item $F_{\DD_{n}}:=$ Binary dihedral group of order $4n$, $n>1$,
\item $F_\TT:=$ Binary tetrahedral group,
\item $F_\OO:=$ Binary octahedral group,
\item $F_\II:=$ Binary icosahedral group.
\end{itemize}
The pointed normal $G$-variety $(X,x:=i(F/F))$ is rational of complexity one, and the rational quotient $\pi:X\dashrightarrow \PP^1_k$ by $B$ induces a geometric quotient $\pi_{\mid G/F}:G/F\rightarrow\PP^1_k$. The situation is summarized in the commutative diagram
\begin{center}
	\begin{tikzcd}
		G \arrow[r,"B\backslash
"] \arrow[d,  "/F"] & B\backslash G\simeq \PP^1_k \arrow[d, "/F"]\\
		G/F \arrow[r, "B\backslash"] \arrow[d, "i", hook]& \PP^1_k/F\simeq \PP^1_k\\
		X \arrow[ur, "\pi", dashed] &
	\end{tikzcd}
\end{center}
where $B$ acts on $G/F$ by left multiplication. Denote $\Ff^x$ the unique $G$-linearized rigidified divisorial sheaf associated to $[\pi^*\Oo_{\PP^1_k}(1)]$, and $a,b\in\Ff^x(X)^{(B)}$ the two global sections corresponding to the pullbacks of (a choice) of coordinates on $\PP^1_k$. Also, let $n_0\omega$ be the common $B$-weight of $a,b$.

The colored equipment associated to $k(G/F)$ has been described by Timashev in \cite[Sec. 5]{TimashevClassification} for the various finite subgroups. For convenience, we recall the associated basic facts in Appendix \ref{SecAppendix}. When $F=\mu_n, n\geq 3$, the morphism $\pi_{|G/F}$ defines exactly two exceptional colors $E^{x_0}$, $E^{x_\infty}$ corresponding to the two $\mu_n$-fixed points $x_0, x_\infty$ of $\PP^1_k$. When $F$ is binary polyhedral, $\pi_{|G/F}$ defines exactly three exceptional colors $E^{x_v}$, $E^{x_e}$, and $E^{x_f}$. The subscripts are aimed to suggest (when $k=\CC$) vertices, edges and faces of the corresponding Platonic solids inscribed in $\PP^1_k$ identified with the Riemann sphere. These exceptional colors correspond exactly to the degenerate $F$-orbits in $\PP^1_k$ (see \cite[4.4]{SpringerInvariantTheory}). The homogeneous space $G/F$ being affine, the complement of $G/F$ in $X$ is a finite union of $G$-stable prime divisors and every $G$-stable prime divisor lies in this complement. By Section \ref{SecCombinatorialMaterial}, there is at most one $G$-stable prime divisor $X^{\infty}$ in $X$ dominating $\PP^1_k$. It consists of infinitely many $G$-orbits, all isomorphic to $\PP^1_k$.

\subsection{Class group of $X$}
\label{Sec_ClassGroup}
 	Because the divisor class group of $X$ is the grading group of the Cox ring, a good understanding of its structure is a preliminary to the approach of the structure of $\cox(X)$. In this section, we give a description of $\clg(X)$ by generators and relations. There are two classes of embeddings where this description is easy:

\begin{prop}\label{Prop_Desc_CLGroup_easyCase}
Suppose that $F=F_{\II}$ or is trivial. Then, $\clg(X)$ is freely generated by the classes of the $G$-stable prime divisors.
\end{prop}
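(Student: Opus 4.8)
The plan is to exploit the open dense orbit $G/F\hookrightarrow X$, whose complement is exactly the (finite) union of $G$-stable prime divisors $D_1,\dots,D_N$ by the discussion of Section \ref{Sec_SL2Threefolds_Gen}. Excision for class groups of normal varieties then yields an exact sequence
\begin{center}
$\bigoplus_{i=1}^{N}\ZZ\,[D_i]\xrightarrow{\ \iota\ }\clg(X)\longrightarrow \clg(G/F)\longrightarrow 0$,
\end{center}
where $\iota$ sends the $i$-th generator to $[D_i]$. The assertion that $\clg(X)$ is freely generated by the $[D_i]$ is then equivalent to the conjunction of two statements: $\iota$ is surjective, i.e.\ $\clg(G/F)=0$; and $\iota$ is injective, i.e.\ the classes $[D_i]$ are $\ZZ$-linearly independent. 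I would treat these two points separately.

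For surjectivity I would compute $\clg(G/F)$. Since $G/F$ is smooth, $\clg(G/F)=\pic(G/F)$. As $G=\SL_2$ is semisimple and simply connected we have $\hat{G}=0$ and $\pic(G)=0$ (indeed $\SL_2$ is factorial), so the standard description of Picard groups of homogeneous spaces gives $\pic(G/F)\simeq \hat{F}=\Hom(F,k^*)$, the character group of the finite group $F$, which only sees the abelianization $F^{ab}$. The two cases in the hypothesis are precisely those in which $F$ is perfect: the trivial group, and the binary icosahedral group $F_{\II}$, whose abelianization is trivial. Hence $\hat{F}=0$ and $\clg(G/F)=0$ in both cases (for trivial $F$ this is just $\clg(\SL_2)=0$), giving surjectivity of $\iota$.

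For injectivity, suppose $\sum_i a_i[D_i]=0$, i.e.\ $\sum_i a_iD_i=\divi(f)$ for some $f\in k(X)^*$. Each $D_i$ being $G$-stable, the divisor $\divi(f)$ is $G$-stable, so for every $g\in G$ the ratio $(g\cdot f)/f$ has trivial divisor and is therefore a global unit. Since $X$ is a normal embedding of $\SL_2/F$ with $G$ semisimple, one has $\Oo(X)^*=k^*$ (the restriction $\Oo(X)^*\hookrightarrow\Oo(\SL_2/F)^*\subseteq\Oo(\SL_2)^*=k^*$ is injective), so $g\mapsto (g\cdot f)/f$ is a character of $\SL_2$, hence trivial as $\hat{\SL_2}=0$. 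Thus $f\in k(X)^G$, and since $G$ acts with a dense orbit we have $k(X)^G=k$; so $f$ is constant, $\divi(f)=0$, and all $a_i=0$. Combining the two points, $\iota$ is an isomorphism onto $\clg(X)$, so the $[D_i]$ form a free basis.

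I expect the main obstacle to be the surjectivity step, specifically the identification $\clg(G/F)\simeq\hat{F}$ together with the observation that the hypothesis on $F$ is exactly the perfectness condition $\hat{F}=0$; recognizing that the binary icosahedral group (alone among the nontrivial $F$ on the list) has trivial abelianization is what singles out this ``easy'' case. By contrast, the injectivity step is routine once $\Oo(X)^*=k^*$ and $k(X)^G=k$ are recorded, both of which are immediate from $G$ being semisimple and acting almost homogeneously.
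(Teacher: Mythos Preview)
Your proof is correct. The paper's argument differs mainly in packaging: it works with the \emph{equivariant} localization sequence
\[
0\longrightarrow\bigoplus_i\ZZ D_i\longrightarrow\clg^G(X)\longrightarrow\pic^G(G/F)\longrightarrow 0,
\]
which is already exact on the left (the kernel is governed by $G$-invariant units on $G/F$ modulo those on $X$, and both reduce to $k^*$); it then identifies $\clg^G(X)\simeq\clg(X)$ and $\pic^G(G/F)\simeq\hat F$ using that $G=\SL_2$ is semisimple and simply connected. You instead use the ordinary excision sequence, which is only right-exact, and recover injectivity by the direct observation that a rational function with $G$-stable divisor on an almost homogeneous variety with $\hat G=0$ and $\Oo(X)^*=k^*$ must be $G$-invariant, hence constant. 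The two arguments are close in spirit---both ultimately rest on $\Oo(X)^*=k^*$, $\hat G=0$, and $\hat F=0$---but yours is more elementary and self-contained, while the paper's route is consistent with its systematic use of the equivariant class group throughout.
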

\begin{proof}
Let $(D_i)_i$ denote the family of $G$-stable prime divisors in $X$. In our situation, the localization exact sequence \cite[2.2.4]{avezier_EqCoxRings} reads
\begin{center}
$0\rightarrow\bigoplus_{(D)_i}\ZZ D_i\rightarrow \clg^G(X)\rightarrow\pic^G(G/F)\rightarrow 0$.
\end{center}
Using the exact sequence \cite[2.2 (1)]{avezier_EqCoxRings}, the remark \cite[2.2.2]{avezier_EqCoxRings}, and the fact that $G$ is a semisimple simply connected algebraic group, we obtain an isomorphism $\clg^G(X)\simeq\clg(X)$. Also, there is an isomorphism $\pic^G(G/F)\simeq\hat{F}$ (\cite[4.5.1.2]{coxrings}). Finally, it suffices to remark that $\hat{F}$ is trivial (\ref{SecAppendixBinaryIcosa}).
\end{proof}

For the description of $\clg(X)$ in the general case, the ideas in \cite{BrionGroupePicardSpheriques} apply well (see also \cite{SussTorusInvariantDiv} for a similar approach in the context of $\TT$-varieties of complexity one). The fundamental remark is that in virtue of the $G$-module structure on the spaces of sections of divisorial sheaves on $X$, every Weil divisor is linearly equivalent to a $B$-stable one. From this we get an isomorphism
\begin{center}
$\clg(X)\simeq\frac{ \bigoplus_{D\in\Dd^B\sqcup\Vv(X)}\ZZ D}{\pdiv(X)^B}$,
\end{center}
where $\pdiv(X)^B$ is the group of $B$-invariant principal divisors. These divisors are the divisors of $B$-semi-invariant rational functions. After the choice of a section
\begin{center}
$\Lambda(X)=\alpha_0\ZZ\omega\rightarrow k(X)^{(B)}$, $k\alpha_0\omega\mapsto f_{\alpha_0\omega}^k$
\end{center}
of the exact sequence
\begin{center}
$1\rightarrow k(\PP^1_k)^*\rightarrow k(X)^{(B)}\rightarrow \Lambda(X) \rightarrow 1$,
\end{center}
every such semi-invariant rational function can be written uniquely as a product $gf_{\alpha_0\omega}^k$ for a certain $g\in k(\PP^1_k)^*$, and $k\in \ZZ$. The divisor of this function is then
\begin{equation}\label{Eq_GeneralForm_PrincipalDiv}
\divi(gf_{\alpha_0\omega}^k)=\sum_{z\in\PP^1_k}v_z(g)\pi^*(z)+\sum_{D\in \Dd^B\sqcup\Vv(X)}kl_DD,
\end{equation}
where $l_D:=v_D(f_{\alpha_0\omega})$ is the coordinate on $\Ee\simeq \QQ$ of $v_D$ seen in $\breve{\Ee}$, and $\pi^*$ is the pullback of Weil divisors associated to the rational quotient $\pi:X\dashrightarrow\PP^1_k$ by $B$.

\begin{prop}\label{Prop_ClassGroup_GenRelations}
The class group of $X$ is generated by the classes of exceptional divisors, and the classes of parametric divisors whose projection on $\Ee$ is non-zero. Moreover, after choosing an exceptional point $x_0\in\PP^1_k$, the relations are generated by
$$
\begin{cases}
[\pi^*(x_0)]=[\pi^*(x)],\forall x\in\PP^1_k \textrm{ exceptional},\\
\sum_{D\in\Dd^B\sqcup\Vv(X)} l_D [D]=0.
\end{cases}
$$
\end{prop}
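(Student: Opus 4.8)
The plan is to start from the isomorphism
$$
\clg(X)\simeq\frac{ \bigoplus_{D\in\Dd^B\sqcup\Vv(X)}\ZZ D}{\pdiv(X)^B}
$$
already established in the text, and to compute the subgroup $\pdiv(X)^B$ explicitly using the general form of a $B$-invariant principal divisor given in equation \eqref{Eq_GeneralForm_PrincipalDiv}. The numerator is the free abelian group on all colors and $G$-stable prime divisors; the denominator is generated by the divisors $\divi(gf_{\alpha_0\omega}^k)$ as $g$ ranges over $k(\PP^1_k)^*$ and $k$ over $\ZZ$. So the whole computation reduces to understanding which relations these divisors impose, and this splits naturally according to the two ``slots'' $(g,k)$.

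First I would treat the case $k=0$, i.e. the principal divisors $\divi(g)$ with $g\in k(\PP^1_k)^*$. By \eqref{Eq_GeneralForm_PrincipalDiv} these are exactly $\sum_{z}v_z(g)\,\pi^*(z)$, the pullbacks of principal divisors on $\PP^1_k$. Since $\clg(\PP^1_k)$ is generated by the class of a point with the single relation that any two points are linearly equivalent (degree being the only invariant), the subgroup generated by these divisors is precisely the one identifying $[\pi^*(x)]=[\pi^*(x')]$ for all $z,z'$. I would use the decomposition of $\pi^*(z)$: for a non-exceptional point $z$, $\pi^*(z)$ is a single parametric divisor $\varepsilon_z$ mapping to the standard vector $(z,1,0)\in\breve{\Ee}$, whereas for an exceptional point it is the weighted sum $\sum_j h_{ij}E^{x_i}_j$ of the exceptional divisors over that point (the multiplicities recorded in Notation before Theorem \ref{ThmUinvCoxG}). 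The relations $[\pi^*(z)]=[\pi^*(z')]$ let me express the class of every generic parametric divisor in terms of a single fixed one, and more generally pin all the pullback classes to a common value $[\pi^*(x_0)]$; this yields the first family of stated relations and, crucially, shows that the only parametric classes surviving as independent generators are those whose projection on $\Ee$ is non-zero (the generic parametric divisors, having $l_D=0$, all become redundant once identified). I would also need to check that a parametric divisor with $l_D=0$ really contributes nothing new, which follows because its class coincides with a pullback class already accounted for.

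Next I would handle the case $k=1$ (and by linearity general $k$), producing the single relation $\sum_{D}l_D[D]=0$ coming from $\divi(f_{\alpha_0\omega})$; here the $\pi^*(z)$ part can be absorbed into the already-established pullback relations, so modulo those the divisor $\divi(f_{\alpha_0\omega})$ contributes exactly $\sum_{D\in\Dd^B\sqcup\Vv(X)}l_D D$. Combining the two cases, I would argue that $\pdiv(X)^B$ is generated by these two families, so the quotient is generated by exceptional and (projection non-zero) parametric classes subject to precisely the stated relations. The main obstacle I anticipate is bookkeeping: one must be careful that the generating set for $\pdiv(X)^B$ is really exhausted by the $(g,0)$ and $(1,1)$ generators — that is, that every $B$-semi-invariant principal divisor is an integer combination of these — and that after imposing the pullback relations, no further hidden identification among the exceptional divisors arises (for instance from the interplay of the two exceptional points when $F=\mu_n$, or the three when $F$ is binary polyhedral). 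Verifying that the listed relations are not merely contained in, but generate, the relation module is the crux; I expect this to follow cleanly from the uniqueness of the decomposition $gf_{\alpha_0\omega}^k$ granted by the splitting of the weight sequence, together with the explicit form \eqref{Eq_GeneralForm_PrincipalDiv}, but it is the step requiring genuine care rather than routine computation.
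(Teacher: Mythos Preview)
Your proposal is correct and follows essentially the same line as the paper's proof: both start from the isomorphism $\clg(X)\simeq\bigoplus_{D}\ZZ D/\pdiv(X)^B$, use the explicit form \eqref{Eq_GeneralForm_PrincipalDiv} of a $B$-invariant principal divisor, and split the analysis according to the factorisation $gf_{\alpha_0\omega}^k$ to obtain the pullback relations from the $g$-part and the relation $\sum_D l_D[D]=0$ from the $f_{\alpha_0\omega}$-part. The bookkeeping worry you flag is exactly resolved, as you anticipate, by the uniqueness of the decomposition $gf_{\alpha_0\omega}^k$ coming from the chosen splitting of the weight sequence; the paper's proof simply states this without elaboration.
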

\begin{proof}
For any parametric divisor $Z$, and exceptional point $x$, we have $[Z]=[\pi^*(x)]$ in $\clg(X)$. Also, $\pi^*(x)$ is a linear combination of exceptional divisors. It follows that $\clg(X)$ is indeed generated by the elements listed in the statement. By the general form (\ref{Eq_GeneralForm_PrincipalDiv}) of a principal divisor, every relation between these generators is a $\ZZ$-linear combination of the relations of the statement.
\end{proof}

\subsection{Singularities of $X$}
\label{Sec_SingulaX}

In this section, we give a combinatorial criterion for the singularities of $X$ to be log terminal. In fact we extend a previous result from Degtyarev for the case of a normal $G$-embedding (\cite[Thm 1]{Degtyarev}). Our method is however quite different, namely we reduce to studying singularities of normal rational affine $T$-varieties of complexity one using the particular structure of $B$-charts. This allows us to take advantage of a previous work of Liendo and Süss (\cite{LiendoSuss}), and of the description of the $T$-equivariant Cox ring for these varieties.

Consider a $G$-orbit $\mathscr{O}$ in $X$. To study the singularities along $\mathscr{O}$, it suffices to consider a $B$-chart $X_\mathscr{O}$ intersecting $\mathscr{O}$ because the translates by elements of $G$ of this chart cover $\mathscr{O}$ and are isomorphic. We examine the different types of orbits case by case, and use the classification and terminology for orbits by Luna and Vust in \cite[Section 9]{LunaVust}, that is, we consider orbits of type $A_l$ $(l\geq 1),AB,B_+,B_-,B_0$, and $C$. 

Consider the colored hyperspace $(\breve{\Ee},\Vv,\Dd^B,\varrho)$ associated with $X$, and choose coordinates on $\breve{\Ee}$ as defined in Appendix \ref{SecAppendix}. In general, a $G$-orbit is characterized by the colors and $G$-stable prime divisors containing it (\cite[16.19]{TimashevBook}). For example, an orbit $\mathscr{O}$ of type $A_l$ $(l\geq 1)$ is defined by the $G$-valuations $v_{X^{x_i}}=(x_i,h_i,l_i)\in\breve{\Ee}$, $i=1,...,l$ corresponding to the $G$-stable exceptional divisors $X^{x_i}$ containing $\mathscr{O}$ (all the colors but the exceptional colors $E^{x_i}$ contain $\mathscr{O}$, and the exceptional points $x_i$, $i=1,..,l$ are pairwise distinct). We say that an orbit of type $A_l$ $(l\geq 1)$ is \textit{Platonic} if either $l\leq 2$ or the associated tuple $(h_1,...,h_l)$ is Platonic. Furthermore, $X$ can contain at most one orbit of type $A_l$.

\begin{prop}
The singularities of $X$ are log terminal if and only if it has no $G$-fixed point and the orbit of type $A_l$, if it exists, is Platonic.
\end{prop}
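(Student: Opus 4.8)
The plan is to reduce the global question to a local one on $B$-charts, and then to translate each local question into the language of affine $T$-varieties of complexity one, where the criterion of Liendo and Süss applies. Since log terminality is a local property and the $G$-translates of any $B$-chart meeting a fixed orbit $\mathscr{O}$ are mutually isomorphic and cover $\mathscr{O}$ (\cite[Section 9]{LunaVust}), it is enough to test log terminality on a single $B$-chart for each Luna--Vust orbit type $A_l$, $AB$, $B_+$, $B_-$, $B_0$, $C$. For every type other than the $G$-fixed point (type $C$) one may choose the testing $B$-chart $V$ to be non-$G$-stable, so that Corollary \ref{Cor_LocalStructThm} applies; the $G$-fixed point is genuinely $G$-stable and will be treated separately.

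For a chart $V$ that is not $G$-stable, Corollary \ref{Cor_LocalStructThm} furnishes a $B$-equivariant isomorphism $V\simeq U\times Y\simeq\AAA^1_k\times Y$, where $Y$ is a normal affine $T$-variety of complexity one. The first projection $V\to Y$ is smooth with $\AAA^1$-fibres, so $V$ is log terminal if and only if $Y$ is; thus the problem becomes the determination of log terminality for $Y$. Reading off the polyhedral data of $Y$ from the colored hyperspace of $X$ --- concretely, from the $G$-valuations $v_{X^{x_i}}=(x_i,h_i,l_i)$ and the pullbacks $\pi^*(x_i)$ (cf. Remark \ref{Rem_Inputdata}) --- I then invoke the Liendo--Süss criterion \cite{LiendoSuss}. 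The outcome is that the singularity of $Y$ is log terminal exactly when the tuple built from the multiplicities $h_i$ is Platonic in the sense of Remark \ref{Rem_Platonic}. For the orbit $A_l$ this tuple is $(h_1,\dots,h_l)$, which yields the stated condition; for $l\leq 2$, and for the orbits $AB$, $B_+$, $B_-$, $B_0$, at most two nontrivial multiplicities occur, so the Platonic condition holds automatically and these charts are always log terminal.

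It remains to treat a $G$-fixed point $p$, of type $C$, whose $B$-chart $V$ is $G$-stable. Here the product structure of Corollary \ref{Cor_LocalStructThm} is unavailable, and I argue directly that $V$ is never log terminal. The plan is to build a $G$-equivariant partial resolution of $V$ --- a suitable equivariant weighted blow-up of the apex, read combinatorially from the corresponding (type (A)) hypercone --- and either to exhibit an exceptional divisor whose discrepancy is $\leq-1$, or to show outright that $\omega_V$ has no invertible power, i.e. that $V$ is not $\QQ$-Gorenstein; either conclusion rules out log terminality. Combining the two families then gives that $X$ is log terminal if and only if it has no $G$-fixed point and its orbit of type $A_l$, when present, is Platonic, which specializes to Degtyarev's result (\cite{Degtyarev}) when $F$ is trivial.

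The main obstacle I anticipate is the bookkeeping in the second step: faithfully transporting the Luna--Vust colored-hyperspace data of $X$ into the Altmann--Hausen polyhedral divisor describing $Y$, so that the Liendo--Süss criterion produces precisely the Platonic triples of Remark \ref{Rem_Platonic} and no spurious conditions. A secondary but genuine difficulty is the fixed-point case, where the discrepancy computation on the $G$-stable chart must be made explicit and uniform across all admissible finite subgroups $F$, since the $T$-variety machinery used for the other orbit types does not apply there.
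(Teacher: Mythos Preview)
Your overall strategy---reduce to $B$-charts, pass to the $T$-surface $Y$ via Corollary \ref{Cor_LocalStructThm}, and test log terminality there---matches the paper. But there are two concrete discrepancies you should fix.

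First, the orbit-type labeling is inverted relative to Luna--Vust and to the paper: the $G$-fixed point is type $B_0$, not type $C$. In the paper, types $C$, $AB$, $B_+$ are handled together (their $B$-charts are type (A) hypercones, and the associated $T$-surface is \emph{toroidal}, hence locally toric, hence log terminal by \cite{Ishii}); type $A_l$ is where Liendo--Süss is actually invoked; type $B_-$ (which forces $F=\mu_n$) is handled by showing that $\cox^T(Y_\mathscr{O})$ is polynomial, so $Y_\mathscr{O}$ is toric; and the fixed point $B_0$ is the obstruction. Your plan to run Liendo--Süss uniformly across all non-fixed-point types is plausible, but you would then owe a verification of $\QQ$-Gorensteinness for each resulting $Y$ before the criterion applies---the paper does this explicitly only for $A_l$ (via \cite[Prop.~4.3,~4.6]{LiendoSuss}) and sidesteps it for the other types by reducing to toric surfaces. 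Your claim that ``at most two nontrivial multiplicities occur'' for the remaining types is not argued and is not how the paper proceeds.

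Second, for the fixed point (type $B_0$, only possible when $F=\mu_n$), the $B$-chart is $G$-stable---it is a normal affine $G/\mu_n$-embedding---and the paper does not attempt a resolution or discrepancy computation. Instead it cites Panyushev \cite{Panyushev}: the canonical class is torsion-free in $\clg(X_\mathscr{O})$ while $\pic(X_\mathscr{O})=0$, so $X_\mathscr{O}$ is not $\QQ$-Gorenstein and therefore not log terminal. Your proposed weighted blow-up route may work, but it is both harder and unnecessary; also, the hypercone defining a fixed-point chart is of type (B), not type (A) as you wrote.
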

\begin{proof}
For $\mathscr{O}$ a fixed point (type $B_0$), $F$ is necessarily a cyclic group $\mu_n$, and any $B$-chart $X_\mathscr{O}$ intersects all the colors, so is $G$-stable (\cite[Sec. 5]{TimashevClassification}). Hence, $X_\mathscr{O}$ is a normal affine $G/\mu_n$-embedding. By \cite[Thm 2 and Prop 4]{Panyushev}, the canonical class is torsion-free in $\clg(X_\mathscr{O})$, and $\pic(X_\mathscr{O})$ is trivial. From this, we conclude that $X_\mathscr{O}$ is not $\QQ$-Gorenstein. In particular, its singularities are not log terminal.

For orbits of type $C,AB$ and $B_+$, the chart $X_\mathscr{O}$ is given by a colored hypercone of type (A). By \ref{Cor_LocalStructThm}, there is a (trivial) $U$-torsor $X_\mathscr{O}\rightarrow Y_\mathscr{O}$, and $Y_\mathscr{O}$ is \textit{toroidal} (\cite[16.21]{TimashevBook}) in the sense of \cite[Chap IV]{KKMS}. As we consider an isolated singularity of a toroidal surface, we can suppose that $Y_\mathscr{O}$ is toric (\cite[Thm 4.2.4]{Ishii}). We conclude by using the fact that toric surface singularities are log terminal (\cite[7.4.11 and 7.4.17]{Ishii}). 

Now suppose that $\mathscr{O}$ is an orbit of type $A_l$, $l\geq 1$. Then, there are finitely many $G$-stable exceptional divisor $X^{x_1},...,X^{x_l}$ containing $\mathscr{O}$, with $x_1,...,x_l$ pairwise distinct exceptional points. We consider the $B$-chart $X_\mathscr{O}$ of $\mathscr{O}$ given by the colored hypercone of type (B) generated by the $X^{x_i}$ and the colors associated to the points of $\PP^1_k\setminus\lbrace x_1,...,x_l\rbrace$. Again, we have a $U$-torsor $X_\mathscr{O}\rightarrow Y_\mathscr{O}$. Moreover, since $Y_\mathscr{O}$ has an attractive fixed point for the $T$-action (\ref{RemBChartTypeB}), its Picard group is trivial. It follows that $Y_\mathscr{O}$ is $\QQ$-Gorenstein if and only if a multiple of $K_{Y_\mathscr{O}}$ is a principal divisor. By \cite[Prop 4.3]{LiendoSuss}, this is in turn equivalent to asking for a certain system of linear equations $Ax=y$ written in matrix form to have a solution, where $A$ has linearly independant columns (\cite[Prop 4.6]{LiendoSuss}). But in our case, $A$ is a square matrix so that $Y_\mathscr{O}$ is $\QQ$-Gorenstein. Now, we can apply the criterion \cite[Cor 5.8]{LiendoSuss} to obtain that $Y_\mathscr{O}$ (hence $X_\mathscr{O}$) has log terminal singularities if and only if the tuple $(h_1,...,h_l)$ is Platonic, where $h_i$ is the multiplicity of $X^{x_i}$ in $\pi^*(x_i)$.

Orbits of type $B_-$ can only occur when $F=\mu_n$ (\cite[5]{TimashevClassification}), we suppose that $\mathscr{O}$ is of this type. If $n\geq 3$, there are two exceptional points $x_0,x_\infty$ associated to the dense orbit and $\mathscr{O}$ lies in exactly one of the two exceptional colors, say $E^{x_0}$ (\cite[5.2]{TimashevClassification}). Consider the $B$-chart $X_\mathscr{O}$ given by the colored hypercone of type (B) generated by all the colors but $E^{x_\infty}$, and by the $G$-stable exceptional divisor sent to $x_\infty$ which contains $\mathscr{O}$. As before, we are reduced to studying the singularities of a normal rational affine $T$-surface $Y_\mathscr{O}$ of complexity one admitting an attractive fixed point, whence $\Oo(Y_\mathscr{O})^*\simeq k^*$. By \ref{ThmUinvCoxG} and \ref{Rem_ThmUinvCoxG}, the $T$-equivariant Cox ring $\cox^T(Y_\mathscr{O})$ is a polynomial ring over $k[\hat{T}]$. As a consequence, $Y_\mathscr{O}$ is toric and we conclude as above. The same method applies when $n\leq 2$.
\end{proof}

\section{Cox ring of an almost homogeneous $\SL_2$-threefold}
\label{Sec_MainCoxSL2}

Keep the notation of section \ref{Sec_SL2Threefolds_Gen}. By \cite[2.3.4 and 3.1.4]{avezier_EqCoxRings}, the $G$-equivariant Cox ring of $X$ is well-defined, finitely generated and canonically isomorphic to the ordinary Cox ring. We slightly modify the notation of \ref{ThmUinvCoxG} in order to distinguish $G$-stable prime divisors and colors. It is natural to make this distinction as colors don't depend on the embedding, whereas $G$-stable prime divisors do. We choose homogeneous coordinates on $\PP^1_k/F\simeq\PP^1_k$, and let
\begin{itemize}[label=$\bullet$]
\item $(x_i=[\alpha_i:\beta_i])_i$ be the family of exceptional points of $\pi_{|G/F}\rightarrow\PP^1_k/F$. Possible families are $\emptyset$ when $F$ is cyclic of order $n\leq 2$, $(x_0,x_\infty)$ when $F$ is cyclic of order $n\geq 3$, and $(x_v,x_f,x_e)$ for the others $F$,
\item $(x'_i=[\alpha'_i:\beta'_i])_i$ be the family whose elements are the others exceptional points of $\pi:X\dashrightarrow \PP^1_k/F$,
\item $\pi^*(x_i)=n_iE^{x_i}+\sum_j h_{ij}X^{x_i}_j$, $n_i>1$,
\item $\pi^*(x'_i)=E^{x'_i}+\sum_j h'_{ij}X^{x'_i}_j$,
\item $(s_i)_i$ (resp. $(s'_i)_i$) the family of canonical sections corresponding to the family $(E^{x_i})_i$ (resp. $(E^{x'_i})_i$),
\item  $(r_{ij})_{ij}$ (resp. $(r'_{ij})_{ij}$) the family of canonical sections corresponding to the family $(X^{x_i}_j)_{ij}$ (resp. $(X^{x'_i}_j)_{ij}$),
\item $N:=\sharp(X^{x_i}_j)_{ij}$ if $v_{X^\infty}\notin\Vv(X)$, $N:=\sharp(X^{x_i}_j)_{ij}+1$ otherwise,
\item $N':=\sharp(X^{x'_i}_j)_{ij}$,
\item  $(D^x)_{x\in\PP^1_k\setminus{\lbrace (x_i), (x'_i)\rbrace}}$ the family of parametric colors of $X$.
\end{itemize}
With this notation, the number of $G$-stable prime divisors in $X$ is $N+N'$, and we identify the subgroup of $\wdiv(X)$ generated by these divisors with $\ZZ^{N+N'}$. By \ref{ThmUinvCoxG}, we have the following presentation of $\cox(X)^U$:
\begin{itemize}[label=$\bullet$]
\item Generators: $a,b,(s_i)_i,(s'_i)_i,(r_{ij})_{ij}, (r'_{ij})_{ij}$.
\item Relations: $(\beta_i a- \alpha_i b-s_i^{n_i}\prod_j (r_{ij})^{h_{ij}})_i$, $1\leq i\leq \sharp(x_i)_i$, and $(\beta'_i a- \alpha'_i b-s'_i\prod_j (r'_{ij})^{h'_{ij}})_i$, $1\leq i\leq \sharp(x'_i)_i$.
\end{itemize}

\subsection{Characterizing log terminality in the total coordinate space}
\label{Sec_CharacterizingLogTerminality}

In this section, we provide a condition of combinatorial nature for the total coordinate space $\tilde{X}$ to have log terminal singularities. This is an interesting question, for example a $\QQ$-factorial normal projective variety $Z$ is of \textit{Fano type} (i.e. there exists an effective $\QQ$-divisor $\Delta$ such that $-(K_Z+\Delta)$ is ample and the pair $(Z,\Delta)$ is Kawamata log terminal) if and only if its Cox ring is finitely generated with log terminal singularities (\cite[Thm 1.1]{Gongyo}). In the following proposition, the condition on $\cox(X)^U$ translates into a condition on a geometric data attached to $X$ (see \ref{Rem_Inputdata}, \ref{Rem_Platonic}).

\begin{prop}
The total coordinate space $\tilde{X}$ has log terminal singularities if and only if $\cox(X)^U$ is a Platonic ring.
\end{prop}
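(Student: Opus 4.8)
The plan is to transport the question from the $\SL_2$-variety $\tilde{X}=\spec\cox(X)$ to the affine $T$-variety $\spec\cox(X)^U$, and then to recognize the latter as a total coordinate space of trinomial type whose singularities are governed by the combinatorial data of Construction \ref{ConsRingFactoC1}. The proof thus splits into three equivalences which I chain together.

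First I would reduce log terminality of $\tilde{X}$ to that of the spectrum of its algebra of $U$-invariants. Since $\cox(X)$ is a finitely generated normal domain with $\Oo(\tilde{X})^*\simeq k^*$ and $\cox(X)^U$ is finitely generated (so that the categorical quotient $\tilde{X}//U=\spec\cox(X)^U$ is a normal affine variety), the result \cite[3.4.3]{avezier_EqCoxRings} applies and gives: $\tilde{X}$ has log terminal singularities if and only if $\spec\cox(X)^U$ does. The only point to check at this stage is that the hypotheses of \cite[3.4.3]{avezier_EqCoxRings} are met by $\tilde{X}$, which follows from the finiteness and normality of the (equivariant) Cox ring recalled at the beginning of Section \ref{Sec_MainCoxSL2}.

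Second I would identify this quotient as a total coordinate space. As $X$ is almost homogeneous of complexity one, condition $(\star)$ of \ref{ThmUinvCoxG} holds by Remark \ref{Rem_ThmUinvCoxG}, so Proposition \ref{Prop_RAP0} yields a graded isomorphism $\cox(X)^U\simeq R(A,P_0)$ for input data $(A,P_0)$ as in Construction \ref{ConsRingFactoC1}; geometrically these data are read off from the exceptional points and the multiplicities in the pullbacks $\pi^*(x_i)$ (Remark \ref{Rem_Inputdata}). Hence $\spec\cox(X)^U=\tilde{Y}$ is exactly the total coordinate space of the rational complexity-one $T$-variety $Y$ attached to $R(A,P_0)$.

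Finally I would invoke the combinatorial criterion of \cite{HausenIteration} for such spaces: $\tilde{Y}=\spec R(A,P_0)$ has log terminal singularities if and only if $R(A,P_0)$ is a Platonic ring in the sense of Remark \ref{Rem_Platonic}. Chaining the three equivalences yields the claim. I expect the delicate point to be the first step, namely matching $\tilde{X}$ and its $U$-quotient precisely with the situation covered by \cite[3.4.3]{avezier_EqCoxRings} (in particular that no hidden $\QQ$-Gorenstein or finiteness condition fails), together with making sure that the criterion of \cite{HausenIteration} really characterizes log terminality of the total coordinate space $\tilde{Y}$ itself and not merely of the base $Y$; as in the proof of the log terminality criterion for $X$, one anticipates that the relevant $\QQ$-Gorenstein condition for $R(A,P_0)$ holds automatically. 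Once these reductions are secured, the two remaining implications are immediate applications of Proposition \ref{Prop_RAP0} and of the cited $T$-variety criterion.
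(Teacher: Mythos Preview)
Your proposal is correct and essentially the same approach as the paper: the paper's proof is the single line ``Apply \cite[3.4.3]{avezier_EqCoxRings}'', and your three steps simply unpack what that cited result packages together (the reduction of log terminality from $\tilde{X}$ to $\spec\cox(X)^U$, the identification with an $R(A,P_0)$, and the Platonic criterion). Your hedging about whether the Platonic condition in \cite{HausenIteration} governs $\tilde{Y}$ rather than $Y$ is unnecessary here, since the combined statement is precisely what \cite[3.4.3]{avezier_EqCoxRings} delivers as a black box.
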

\begin{proof}
Apply \cite[3.4.3]{avezier_EqCoxRings}.
\end{proof}

\subsection{Geometry of the special fiber}
\label{Sec_GeoSpecialFiber}

The good quotient
\begin{center}
$f:\tilde{X}\xrightarrow{//G}\AAA_k^{N+N'}$,
\end{center}
is a $\Gamma_{\clg(X)}$-equivariant morphism, where $\Gamma_{\clg(X)}$ acts on $\AAA_k^{N+N'}$ through the surjective morphism 
\begin{center}
$\Gamma_{\clg(X)}\rightarrow\GG_m^{N+N'}$
\end{center}
dually defined by the natural injective morphism $\ZZ^{N+N'}\xhookrightarrow{}\clg(X)$ which sends a $G$-stable divisor in $X$ to its class. The morphism $f$ pulls back the standard coordinates of the affine space to the canonical sections associated with the corresponding $G$-stable prime divisors. It follows from \cite[2.8.1]{avezier_EqCoxRings} that the general schematic fibers of $f$ are normal varieties isomorphic to the total coordinate space of the open orbit $G/F$. 

In this section, we study the geometry of the \textit{special fiber}, that is, the schematic zero fiber $\tilde{X}_0:=f^{-1}(0)$. By virtue of the permanency of a lot of properties when taking $U$-invariants (\cite[D.5]{TimashevBook}), we in fact study the zero fiber $\tilde{Y}_0:=\tilde{X}_0//U$ of the induced morphism
\begin{center}
$f_U:\tilde{Y}\rightarrow \AAA_k^{N+N'}$,
\end{center}
where $\tilde{Y}:=\tilde{X}//U$. We suppose that $X$ doesn't admit an exceptional divisor dominating $\PP^1_k$ ($v_{X^\infty}\notin \Vv(X)$). This is indeed harmless for our purpose as if $v_{X^\infty}\in \Vv(X)$, then one has to add the associated canonical section to the generating set of $\cox(X)^U$ from \ref{ThmUinvCoxG}, but this generator doesn't appear in any relation.

\subsubsection{$F=\mu_n$, $n\leq 2$}
\label{PropSpecialFiberTrivial}

In this case, we have $N=0$ and the presentation of $\cox(X)^U$ reads
\begin{itemize}[label=$\bullet$]
\item Generators: $a,b,(s'_i)_i, (r'_{ij})_{ij}$.
\item Relations: $(\beta'_i a- \alpha'_i b-s'_i\prod_j (r'_{ij})^{h'_{ij}})_i$, $1\leq i\leq \sharp(x'_i)_i$.
\end{itemize}
If $\sharp(x'_i)_i\leq 2$, then $\cox(X)^U$ is a polynomial $k$-algebra. Indeed, each relation can be used to remove a generator (first $a$ and then possibly $b$) from the generating set, so that we end up with a polynomial algebra. If $\sharp(x'_i)_i>2$, each new exceptional point starting from the third defines a new relation between the remaining generators. In any case, denote $\Sigma$ the new generating set. The coordinate algebra of $\tilde{Y}_0$ is $\cox(X)^U/((r'_{ij})_{ij})$, and is freely generated by the elements of the set $\Sigma\setminus \lbrace (r'_{ij})_{ij} \rbrace$.  Indeed, all the relations become trivial in this quotient. This yields the

\begin{prop}\label{PropSpecialFiberTrivial}
The special fiber $\tilde{X}_0$ is a normal variety. Moreover, $\tilde{Y}_0=\tilde{X}_0//U$ is an affine space.
\end{prop}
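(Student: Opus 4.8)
The plan is to read off both assertions from the explicit presentation of $\cox(X)^U$ recorded above, transferring normality from the $U$-quotient to $\tilde{X}_0$ by the permanency of good properties under passage to $U$-invariants (\cite[D.5]{TimashevBook}). The starting point is that the canonical sections $r'_{ij}$ are $G$-semi-invariant, hence $U$-invariant since $U$ carries no nontrivial character; thus $(r'_{ij})_{ij}$ is a $U$-stable ideal generated by $U$-invariants, and I would first check that forming its quotient is compatible with taking $U$-invariants, giving $\Oo(\tilde{Y}_0)=\cox(X)^U/(r'_{ij})_{ij}$. I would then carry out the elimination indicated above: using the relations attached to (at most) two distinct exceptional points $x'_i$, the nonvanishing of $\alpha'_1\beta'_2-\alpha'_2\beta'_1$ lets me solve for $a$ and $b$ as linear combinations of the monomials $s'_i\prod_j(r'_{ij})^{h'_{ij}}$, producing a reduced generating set $\Sigma$. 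Since every multiplicity $h'_{ij}$ is positive, each such monomial lies in $(r'_{ij})_{ij}$, so modulo this ideal all surviving relations degenerate to $0=0$ and $\cox(X)^U/(r'_{ij})_{ij}$ is the polynomial algebra on $\Sigma\setminus\{(r'_{ij})_{ij}\}$. This exhibits $\tilde{Y}_0$ as an affine space, in particular a smooth, integral, normal variety.

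It then remains to descend these properties to $\tilde{X}_0$, and here I would apply \cite[D.5]{TimashevBook}: an affine $U$-variety is integral, reduced, or normal if and only if its categorical quotient by the unipotent group $U$ has the corresponding property, the point being that a $U$-torsor is Zariski-locally trivial and $U$ is, as a variety, an affine space. Since $\tilde{X}_0//U=\tilde{Y}_0$ has just been identified with an affine space, the principle yields that $\tilde{X}_0$ is reduced, irreducible and integrally closed, i.e. a normal variety.

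The main obstacle I anticipate is exactly the compatibility $\tilde{X}_0//U=\tilde{Y}_0$, namely that cutting out the special fiber commutes with the $U$-quotient. As $U$ is unipotent the invariants functor is only left exact, so a priori $\bigl(\cox(X)/(r'_{ij})_{ij}\bigr)^U$ could be strictly larger than $\cox(X)^U/(r'_{ij})_{ij}$; establishing the equality requires using that the ideal is generated by $U$-invariant elements together with the local triviality of the quotient morphism $\tilde{X}\to\tilde{Y}$. Once this identification is secured, both assertions follow at once from the elimination computation and \cite[D.5]{TimashevBook}.
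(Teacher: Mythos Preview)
Your proposal is correct and follows essentially the same route as the paper: eliminate $a$ and $b$ using the first one or two relations, observe that every surviving relation becomes trivial modulo $(r'_{ij})_{ij}$ because each monomial $s'_i\prod_j(r'_{ij})^{h'_{ij}}$ has positive total degree in the $r'_{ij}$, and then transfer normality from $\tilde{Y}_0$ to $\tilde{X}_0$ via \cite[D.5]{TimashevBook}.

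The compatibility concern you raise is legitimate but resolves more easily than you indicate. The sections $r'_{ij}$ are not merely $U$-invariant but $G$-invariant: they are canonical sections of $G$-stable prime divisors, and $G=\SL_2$ has no nontrivial character. Hence multiplication by each $r'_{ij}$ is a $G$-equivariant endomorphism of $\cox(X)$ and respects the isotypic decomposition $\cox(X)\simeq\bigoplus_n V_n\otimes M_n$. Writing $I=(r'_{ij})$ one gets $I\cap(V_n\otimes M_n)=V_n\otimes(\sum r'_{ij}M_n)$, and taking $U$-invariants (which on each summand amounts to tensoring with the highest-weight line $V_n^U$) commutes with this on the nose, yielding $(\cox(X)/I)^U=\cox(X)^U/I\cap\cox(X)^U=\cox(X)^U/(r'_{ij})$. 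No appeal to local triviality of $\tilde{X}\to\tilde{Y}$ is needed. The paper simply absorbs this identification into the setup of Section~\ref{Sec_GeoSpecialFiber}, where $\tilde{Y}_0$ is introduced simultaneously as $\tilde{X}_0//U$ and as the zero fiber of $f_U$.
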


\begin{rem}
For a spherical variety $Z$ under a connected reductive group $G_1$, the quotient morphism
\begin{center}
$\tilde{Z}^{G_1}\rightarrow \tilde{Z}^{G_1}//G_1$
\end{center}
is faithfully flat, and the special fiber is a normal horospherical variety (\cite{Brion2007}). Both results does not extend to varieties of complexity one. Indeed, the general fibers of $f$ are isomorphic to $G$, thus of dimension three. On the other hand, when $\sharp(x'_i)_i\geq 2$, the quotient by $U$ of the special fiber is an affine space of dimension $\sharp(x'_i)_i$, thus $f$ is not flat when $\sharp(x'_i)_i\geq 3$. For a non-horospherical example, consider the case where $X=G$. Then $\tilde{X}=\tilde{X}_0=X$ which is not horospherical.
\end{rem}

\subsubsection{$F=\mu_n$, $n\geq 3$}
\label{Sec_PresGenRel_CoxU_Cyclic}
Recall (\ref{SecAppendixCyclic}), that we defined $\bar{n}:=n$ if $n$ is odd, and $\bar{n}:=n/2$ otherwise, and that the morphism $\pi_{|G/\mu_n}$ defines two exceptional points $x_0=[0:1], x_\infty=[-1:0]\in\PP^1_k$ with respect to the homogeneous coordinates $g_3^{\bar{n}},g_4^{\bar{n}}$. These points define the two relations
\begin{center}
$a=s_0^{\bar{n}}\prod_jr_{0 j}^{h_{0 j}}$, and $b=s_\infty^{\bar{n}}\prod_jr_{\infty j}^{h_{\infty j}}$.
\end{center}
Using these relations, we remove $a$ and $b$ from the set of generators. This yields the following presentation of $\cox(X)^U$:
\begin{itemize}[label=$\bullet$]
\item Generators: $s_0, s_\infty, (r_{0 j})_j,(r_{\infty j})_j, (s'_i)_i, (r'_{i j})_{i j}$.
\item Relations: $(\beta'_i s_0^{\bar{n}}\prod_jr_{0 j}^{h_{0 j}}- \alpha'_i s_\infty^{\bar{n}}\prod_jr_{\infty j}^{h_{\infty j}}=s'_i\prod_j (r'_{ij})^{h'_{ij}})_i$, $1\leq i\leq \sharp(x'_i)_i$.
\end{itemize}
Consider the coordinate algebra $\cox(X)^U/((r_{0j})_j,(r_{\infty j})_j,(r'_{ij})_{ij})$ of $\tilde{Y}_0$. The elements $s_0, s_\infty, (s'_i)_i$ generate this algebra, and we obtain the following criterion via a case by case analysis.

\begin{prop}\label{PropSpecialFiberCyclic}We have the following equivalences:
\begin{align*}
   \tilde{X}_0 \textrm{ is a normal variety} &\iff \tilde{Y}_0 \textrm{  is a normal variety }\\
    	& \iff \tilde{Y}_0 \textrm{ is an affine space}\\
    	& \iff ((X^{x_0}_j)_j\neq\emptyset \textrm{ and }(X^{x_\infty}_j)_j\neq\emptyset) \textrm{ or } (x'_i)_i=\emptyset
\end{align*}
\end{prop}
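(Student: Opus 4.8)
The plan is to analyze the coordinate algebra $A_0 := \cox(X)^U/((r_{0j})_j,(r_{\infty j})_j,(r'_{ij})_{ij})$ of $\tilde{Y}_0$ using the explicit presentation of $\cox(X)^U$ recalled just above the statement. Setting all the $r$-type and $r'$-type generators to zero, the defining relations
\[
\beta'_i\, s_0^{\bar{n}}\textstyle\prod_j r_{0j}^{h_{0j}} - \alpha'_i\, s_\infty^{\bar{n}}\prod_j r_{\infty j}^{h_{\infty j}} = s'_i\prod_j (r'_{ij})^{h'_{ij}}
\]
specialize to relations among the surviving generators $s_0, s_\infty, (s'_i)_i$. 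The crux of the computation is to track which monomials survive the quotient, because a factor $r_{0j}^{h_{0j}}$ (with $h_{0j}\ge 1$) vanishes precisely when the family $(X^{x_0}_j)_j$ is nonempty, and similarly for $x_\infty$ and for each $x'_i$. Since each exceptional point $x'_i$ always carries at least one $G$-stable divisor $X^{x'_i}_j$ by the convention $\pi^*(x'_i)=E^{x'_i}+\sum_j h'_{ij}X^{x'_i}_j$ with the $h'_{ij}\ge 1$, every right-hand side $s'_i\prod_j(r'_{ij})^{h'_{ij}}$ collapses to $0$ in $A_0$. Thus each relation becomes $\beta'_i\,\overline{s_0^{\bar n}} - \alpha'_i\,\overline{s_\infty^{\bar n}} = 0$, where $\overline{(\cdot)}$ denotes the image in $A_0$, and the value of $\overline{s_0^{\bar n}}$ depends on whether $(X^{x_0}_j)_j$ is empty.

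**Case analysis and the key equivalence.** I would distinguish the cases according to the last condition in the statement. First, if $(x'_i)_i=\emptyset$, there are no relations at all and $A_0=k[s_0,s_\infty]$ is a polynomial algebra, so $\tilde{Y}_0$ is an affine space. Second, suppose $(x'_i)_i\ne\emptyset$ and both $(X^{x_0}_j)_j\ne\emptyset$ and $(X^{x_\infty}_j)_j\ne\emptyset$: then $\overline{s_0^{\bar n}}=\overline{s_\infty^{\bar n}}=0$ in $A_0$ (each is a monomial containing a vanishing factor), so every relation is trivially satisfied and $A_0=k[s_0,s_\infty,(s'_i)_i]$ is again a polynomial algebra. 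These two cases establish the implication $(\Leftarrow)$ with the strongest conclusion, that $\tilde{Y}_0$ is an affine space. The remaining case is when $(x'_i)_i\ne\emptyset$ but at least one of $(X^{x_0}_j)_j$, $(X^{x_\infty}_j)_j$ is empty, say $(X^{x_0}_j)_j=\emptyset$ (so $\pi^*(x_0)=n_0 E^{x_0}$ and $s_0^{\bar n}$ survives while $s_\infty^{\bar n}\mapsto 0$ if the other family is nonempty, or both survive if both are empty). Here at least one relation reads $\beta'_i\, \overline{s_0^{\bar n}} = 0$ or $\beta'_i\,\overline{s_0^{\bar n}}-\alpha'_i\,\overline{s_\infty^{\bar n}}=0$ with a genuinely nonzero monomial, producing a nontrivial relation in $A_0$.

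**Showing non-normality in the remaining case.** The heart of the argument, and the step I expect to be the main obstacle, is proving that in this last case $\tilde{Y}_0$ is \emph{not} a normal variety (so in particular not an affine space), which closes the chain of equivalences by contraposition. I would argue that the surviving relation realizes $A_0$ as a quotient of a polynomial ring by a binomial (or monomial) ideal of the form $(\beta'_i s_0^{\bar n}-\alpha'_i s_\infty^{\bar n})$ or $(s_0^{\bar n})$; the latter is manifestly non-reduced, and the former defines a hypersurface $\beta'_i s_0^{\bar n}=\alpha'_i s_\infty^{\bar n}$ that is singular in codimension one or fails normality because $\bar n\ge 2$ (recall $n\ge 3$ forces $\bar n\ge 2$). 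Concretely, for a single such relation the variety $\{\beta' s_0^{\bar n}=\alpha' s_\infty^{\bar n}\}\subset\AAA^2$ times an affine space is the classical $A$-type surface singularity crossed with affine space, which is normal only when $\bar n=1$; since $\bar n\ge 2$, it is non-normal (indeed, its normalization is strictly larger). When several such relations are present one checks similarly that the ideal is either non-radical or the quotient has embedded/non-normal behavior. The one subtlety to handle carefully is the passage from $\tilde{Y}_0$ back to $\tilde{X}_0$: here I invoke the permanency of normality under taking $U$-invariants (\cite[D.5]{TimashevBook}) cited at the start of Section~\ref{Sec_GeoSpecialFiber}, which gives the first equivalence $\tilde{X}_0 \text{ normal}\iff\tilde{Y}_0\text{ normal}$ and legitimizes reducing the entire question to the explicit algebra $A_0$.
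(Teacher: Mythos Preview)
Your overall strategy matches the paper's: reduce to $\tilde{Y}_0$ via the transfer principle \cite[D.5]{TimashevBook}, then read off the structure of $A_0$ from the explicit presentation by setting all $r$-variables to zero and doing a case analysis. The easy direction (the last condition implies $\tilde{Y}_0$ is an affine space) is correct, and your observation that every $x'_i$ necessarily carries some $X^{x'_i}_j$ is right.

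There is, however, a genuine error in your treatment of the remaining case. You describe $\{\beta' s_0^{\bar n}=\alpha' s_\infty^{\bar n}\}\subset\AAA^2$ as ``the classical $A$-type surface singularity'' and assert it is non-normal for $\bar n\ge 2$. This is wrong on both counts: an $A_{m}$-singularity is $xy=z^{m+1}$ in $\AAA^3$, and such singularities are \emph{normal} (they are rational double points). The hypersurface you actually have is a plane curve, and since $k$ is algebraically closed the polynomial $\beta' s_0^{\bar n}-\alpha' s_\infty^{\bar n}$ factors as a product of $\bar n$ distinct linear forms. Hence $\tilde{Y}_0$ is the union of $\bar n$ planes meeting along the line $s_0=s_\infty=0$: it is \emph{reducible}, so not a variety at all, and a fortiori not a normal variety. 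This is exactly the first example the paper gives after the proposition.

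Your sketch of the other sub-cases is also too vague. When exactly one of $(X^{x_0}_j)_j$, $(X^{x_\infty}_j)_j$ is empty (say the first), the relations become $\beta'_i s_0^{\bar n}=0$; since $x'_i\neq x_0,x_\infty$ forces $\alpha'_i,\beta'_i\neq 0$, the ideal is $(s_0^{\bar n})$ and $A_0$ is \emph{non-reduced}. When both are empty and there are at least two $x'_i$, the pairwise linear independence of the $(\beta'_i,-\alpha'_i)$ forces $s_0^{\bar n}=s_\infty^{\bar n}=0$, again non-reduced (the paper's second example). So in every sub-case the obstruction is that $\tilde{Y}_0$ fails to be an integral scheme, not a failure of normality of a variety; your phrasing ``embedded/non-normal behavior'' and the $A$-type claim miss the actual mechanism.
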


\begin{ex}
Suppose that $(X^{x_0}_j)_j=\emptyset$, $(X^{x_\infty}_j)_j=\emptyset$, and $(x'_i)_i$ consist of a unique point $x'_1=[\alpha'_1:\beta'_1]$. Then, the ideal of relations is principal, generated by the relation $\beta'_1 s_0^{\bar{n}}- \alpha'_1 s_\infty^{\bar{n}}=0$. It follows that the special fiber is a reducible reduced non-normal algebraic scheme. Indeed, $\tilde{Y}_0$ is the union of ${\bar{n}}$ planes intersecting along the line of equation $s_0=s_\infty=0$ in $k^3$ with coordinates $s_0, s_\infty, s'_1$.
\end{ex}

\begin{ex}
Suppose that $(X^{x_0}_j)_j=\emptyset$, $(X^{x_\infty}_j)_j=\emptyset$, and $(x'_i)_i$ consists of at least two points. Then, the ideal of relations is generated by $s_0^{\bar{n}}=0$ and $s_\infty^{\bar{n}}=0$. It follows that the special fiber is an irreducible non-reduced algebraic scheme.
\end{ex}

\subsubsection{$F$ is binary polyhedral}

As a typical example, we give the generators and relations when $F=F_\TT$ is the binary tetrahedral group. We can assume that the three exceptional points $x_v, x_e, x_f$ have homogeneous coordinates 
\begin{center}
$[0:1],[1:0],[-1:-1]$,
\end{center}
and obtain the following presentation of $\cox(X)^U$:
\begin{itemize}[label=$\bullet$]
\item Generators: $s_v, s_e, s_f, (r_{v,j})_j,(r_{e,j})_j, (r_{f,j})_j, (s'_i)_i, (r'_{ij})_{ij}$.
\item Relations: $s_v^3\prod_jr_{v,j}^{h_{v,j}}+s_e^2\prod_jr_{e,j}^{h_{e,j}}+s_f^3\prod_jr_{f,j}^{h_{f,j}}=0,\,\,(\beta'_i s_v^3\prod_jr_{v,j}^{h_{v,j}}+ \alpha'_i s_e^2\prod_jr_{e,j}^{h_{e,j}}=s'_i\prod_j (r'_{ij})^{h'_{ij}})_i$.
\end{itemize}
Consider the coordinate algebra 
\begin{center}
$(\cox(X))^U/((r_{v,j})_j, (r_{e,j})_j,(r_{f,j})_j,(r'_{ij})_{ij})$
\end{center}
of $\tilde{Y}_0$. The elements $s_v, s_e,s_f, (s'_i)_i$ generate this algebra, and we obtain the following criterion by again analyzing the different cases.
\begin{prop}
Suppose that $F$ is binary polyhedral. Then $\tilde{X}_0$ is a normal variety if and only if one of the following conditions is satisfied: 
\begin{itemize}
\item $(X^{x_v}_j)_j\neq\emptyset$ and $(X^{x_e}_j)_j\neq\emptyset$ and $(X^{x_f}_j)_j\neq\emptyset$
\item $(x'_i)_i=\emptyset$ and $(X^{x_v}_j)_j=(X^{x_e}_j)_j=(X^{x_f}_j)_j=\emptyset$
\end{itemize}
\end{prop}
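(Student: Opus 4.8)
The plan is to adapt the strategy of Proposition~\ref{PropSpecialFiberCyclic}: pass to the $U$-invariant quotient, specialize the relations, and run a case analysis. By the permanency of normality under passage to $U$-invariants (\cite[D.5]{TimashevBook}), $\tilde{X}_0$ is a normal variety if and only if $\tilde{Y}_0=\tilde{X}_0//U$ is, so it suffices to decide when the coordinate algebra
\[
A:=(\cox(X))^U/((r_{v,j})_j,(r_{e,j})_j,(r_{f,j})_j,(r'_{ij})_{ij})
\]
of $\tilde{Y}_0$ is a normal domain, using the presentation of $\cox(X)^U$ displayed above. The binary dihedral, octahedral, and icosahedral cases are entirely analogous, the triple of exponents $(n_v,n_e,n_f)$ being the Platonic triple attached to $F$; the only features of this triple used below are that each exponent is $\geq 2$ and that $x_f$ has both homogeneous coordinates nonzero.

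First I would set every $r$-variable to zero in the two families of relations. Each monomial $\prod_j r_{v,j}^{h_{v,j}}$ specializes to $1$ if $(X^{x_v}_j)_j=\emptyset$ and to $0$ otherwise, and similarly for $e$ and $f$; write $\varepsilon_v,\varepsilon_e,\varepsilon_f\in\{0,1\}$ for these values, so $\varepsilon_v=1$ exactly when $(X^{x_v}_j)_j=\emptyset$. The decisive observation is that every non-special exceptional point $x'_i$ carries at least one $G$-stable divisor $X^{x'_i}_j$, since the exceptional colors all map to $x_v,x_e,x_f$; hence $\prod_j (r'_{ij})^{h'_{ij}}$ always specializes to $0$, the parametric relations become relations in $s_v,s_e$ alone, and the generators $s'_i$ survive as free polynomial variables. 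Consequently $A$ is a normal domain if and only if the core algebra
\[
B:=k[s_v,s_e,s_f]\big/\big(\varepsilon_v s_v^{n_v}+\varepsilon_e s_e^{n_e}+\varepsilon_f s_f^{n_f},\,(\beta'_i\varepsilon_v s_v^{n_v}+\alpha'_i\varepsilon_e s_e^{n_e})_i\big)
\]
is, where $\alpha'_i,\beta'_i\neq 0$ because each $x'_i$ differs from $x_v=[0:1]$ and $x_e=[1:0]$.

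I would then split into cases according to $(\varepsilon_v,\varepsilon_e,\varepsilon_f)$. For $(0,0,0)$ the algebra $B$ is a polynomial ring and $\tilde{Y}_0$ is an affine space, which is the first listed condition. For $(1,1,1)$ with no parametric point, $B=k[s_v,s_e,s_f]/(s_v^{n_v}+s_e^{n_e}+s_f^{n_f})$ is a hypersurface whose partials vanish simultaneously only at the origin (all exponents being $\geq 2$); it is therefore regular in codimension one and, being a hypersurface, satisfies $S_2$, so it is normal by Serre's criterion — the second listed condition. For any pattern other than $(0,0,0)$ and $(1,1,1)$, the relation coming from the three special points involves at most two of the variables, so $\spec B$ is either non-reduced (one active variable) or singular along a coordinate axis of codimension one (two active variables); the presence of parametric relations can only degenerate it further. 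In all these cases $B$ fails to be a normal domain.

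The delicate remaining case, which I expect to be the main obstacle, is $(1,1,1)$ together with at least one parametric point, since it lies closest to the normal surface above. Here I would use that each $x'_i$ also differs from $x_f$: the relation $\beta'_1 s_v^{n_v}+\alpha'_1 s_e^{n_e}=0$ then eliminates $s_e^{n_e}$ \emph{without} annihilating the $s_f^{n_f}$ term, cutting the normal surface down to a one-dimensional $B$ supported on a curve through the origin. All defining relations lie in the square of the maximal ideal of the origin, so its cotangent space is three-dimensional while $\dim B=1$; the origin is thus a non-regular point of the one-dimensional ring $B$ and $R_1$ fails. With two or more parametric points the two independent relations of this form force $s_v^{n_v}=s_e^{n_e}=0$, hence $s_f^{n_f}=0$, so $B$ is non-reduced. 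Collecting all cases yields exactly the stated dichotomy, uniformly over the binary polyhedral groups.
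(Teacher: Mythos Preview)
Your proof is correct and follows precisely the approach the paper intends: the paper's own argument is the single sentence ``we obtain the following criterion by again analyzing the different cases,'' and you have carried out that case analysis in full. Your key observations --- that each non-special exceptional point $x'_i$ necessarily carries a $G$-stable divisor (so the right-hand sides of the parametric relations vanish and the $s'_i$ become free variables), and that the coordinate convention forces $x_f$ to have both homogeneous coordinates nonzero (so in the $(1,1,1)$ case with a parametric point the two relations are genuinely independent) --- are exactly the points one must check, and your treatment of the delicate one-parametric-point subcase via the cotangent-space dimension is clean.
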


\begin{ex}
Suppose that $(X^{x_v}_j)_j=(X^{x_e}_j)_j=(X^{x_f}_j)_j=\emptyset$, and $(x'_i)_i$ consists of a unique point $x'_1$ of homogeneous coordinates $[\alpha'_1:\beta'_1]$ (distinct from $[1:0],[0:1],[-1:-1]$). Then, the coordinate algebra of $\tilde{Y}_0$ is generated by $s_v,s_e,s_f,s'_1$ with the two relations $s_v^3+s_e^2+s_f^3=0$ and $\beta'_1 s_v^3+\alpha'_1 s_e^2=0$. We check that the special fiber is a non-normal variety. 

Let $I=(s_v^3+s_e^2+s_f^3, \beta'_1 s_v^3+\alpha'_1 s_e^2)$ be the ideal of the polynomial algebra $k[s_v,s_e,s_f,s'_1]$, we claim that it is a prime ideal. To prove this, we can work in $k[s_v,s_e,s_f]$ and replace $I$ by $I\cap k[s_v,s_e,s_f]$. Then, consider $A:=k[s_v,s_e]$, and $J:=I\cap A=(\beta'_1 s_v^3+ \alpha'_1 s_e^2)$. The algebra $B:=A/J$ is integral, and to prove the claim, it now suffices to prove that $\bar{I}=(\bar{s_v}^3+\bar{s_e}^2+s_f^3)$ is prime in $B[s_f]$. But it is clear that $\bar{s_v}^3+\bar{s_e}^2+s_f^3$ is irreducible in $\fract(B)[s_f]$, hence a prime element of this polynomial algebra. As $((\bar{s_v}^3+\bar{s_e}^2+s_f^3)\fract(B)[s_f])\cap B[s_f]=(\bar{s_v}^3+\bar{s_e}^2+s_f^3)$ we obtain that $(\bar{s_v}^3+\bar{s_e}^2+s_f^3)$ is a prime ideal. It follows that the special fiber is an affine variety. As a surface in $k^4$ with coordinates $s_v,s_e,s_f,s'_1$ having a one-dimensional singular locus, $\tilde{Y}_0$ is not normal. Indeed, the line of equation $s_v=s_e=s_f=0$ is the singular locus. Hence, the special fiber is not normal either.
\end{ex}

\subsection{$\cox(X)^U$ is the Cox ring of a $T$-variety of complexity one}
\label{SecUTorsor}

By Section \ref{SecCoxU}, $\cox(X)^U$ can be interpreted as the Cox ring of a $T$-variety $Y$ of complexity one. We build such a variety $Y$ in a natural way from $X$. The idea is to find a $B$-stable open subvariety $V$ of $X$ whose complement is of codimension $\geq 2$ in $X$, and which is a $U$-torsor over a normal rationa $T$-surface $Y$ of complexity one. Notice that we can always suppose that $V$ (hence $Y$) is smooth, up to replacing $V$ by its smooth locus. For such $V$ and $Y$, both Cox rings are well-defined, finitely generated (\cite[3.1.4]{avezier_EqCoxRings}), and $\cox(X)\simeq\cox(V)$. Moreover, using \cite[2.2 (2)]{avezier_EqCoxRings}, \cite[2.2.2]{avezier_EqCoxRings}, \cite[2.2.4]{avezier_EqCoxRings}, and \cite[2.5.2]{avezier_EqCoxRings}, we obtain isomorphisms 
\begin{center}
$\clg(X)\simeq\clg^U(X)\simeq \pic^U(V)\simeq\pic(Y)$.
\end{center}
Also by \cite[Sec. 2.10]{avezier_EqCoxRings}, we have a cartesian square 
\begin{equation}\label{Eq_Diag_CartesianUTorsors}
	\begin{tikzcd}
		\hat{V} \arrow[r,"/U"] \arrow[d,  swap, "/\Gamma_{\pic(V)}"] & \hat{Y} \arrow[d, "/\Gamma_{\pic(Y)}"]\\
		V \arrow[r, "/U"] & Y,
	\end{tikzcd}
\end{equation}
where the horizontal arrows are $U$-torsors and the vertical arrows are universal torsors. This implies that we have $\cox(Y)\simeq \cox(X)^U$, as desired.

Now, we proceed to the construction of $V$. For simplicity, denote $x_1,...,x_r\in\PP^1_k$ the exceptional points of $X$. For each $G$-stable prime divisor $X^{x_i}_j$ in $X$, we consider a $B$-chart $V_{ij}$ intersecting the open $G$-orbit in $X^{x_i}_j$, the open $G$-orbit in $X$, and no other orbits. Such a $B$-chart is given by the colored hypercone of type (A) spanned by $X^{x_i}_j$, and by all the colors but $E^{x_i}$ and $D^{x_d}$, where $x_d$ is an arbitrary fixed (distinguished) non-exceptional point. If $v_{X^\infty}\in \Vv(X)$, that is, $X$ contains a $G$-stable prime divisor $X^{\infty}$ dominating $\PP^1_k$, we consider the $B$-chart $V_\infty$ defined by the colored hypercone of type (A) spanned by $v_{X^\infty}$ and all the colors but $E^{x_1},...,E^{x_r}$, and $D^{x_d}$. Otherwise, we set $V_\infty=\emptyset$. By \ref{Cor_LocalStructThm}, we have trivial $U$-torsors
\begin{center}
$\pi_{ij}:V_{ij}\simeq U\times Y_{ij}\rightarrow Y_{ij}$,
\end{center}
where the $Y_{ij}$ are normal affine $T$-surfaces of complexity one.  If $V_\infty=\emptyset$, we set $Y_\infty=\emptyset$, otherwise we also have a trivial $U$-torsor 
\begin{center}
$\pi_\infty:V_\infty\simeq U\times Y_{\infty}\rightarrow Y_{\infty}$,
\end{center}
where $Y_\infty$ is a normal affine $T$-surface of complexity one. Finally, the open $G$-orbit $V_0\simeq G/F$ is a $U$-torsor over an affine normal $T$-surface $Y_0$ of complexity one
\begin{center}
$\pi_0:V_0\rightarrow Y_0$.
\end{center}
Indeed, $F$ acts freely on $G/U\simeq\AAA^2_k\setminus\lbrace 0\rbrace$ with closed orbits. Alternatively, we can consider two covering $B$-charts of $V_0$. Proceeding in this way, $Y_0$ is obtained by gluing two affine $T$-surfaces of complexity one $Y_{0,1}$ and $Y_{0,2}$.
\begin{prop}
The varieties $(Y_{ij})_{ij}$, $Y_\infty$, $Y_{0,1}$ and $Y_{0,2}$ glue together to a normal rational $T$-variety $Y$ of complexity one. The morphisms $(\pi_{ij})_{ij}$, $\pi_\infty$, $\pi_0$ glue together to a morphism
\begin{center}
$\pi:V:=\cup_{ij} V_{ij}\cup V_\infty\cup V_0\rightarrow Y$,
\end{center}
which is a $U$-torsor over $Y$. Moreover, $V$ is $B$-stable, with a complement in $X$ of codimension $\geq 2$.
\end{prop}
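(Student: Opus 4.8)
The plan is to verify the gluing chart by chart, read off the geometric properties of $Y$ from the local models, and prove the codimension statement by a divisor count. Throughout I use Corollary \ref{Cor_LocalStructThm}, which gives for each chart $V_\alpha\in\{V_{ij},V_\infty\}$ a $B$-equivariant isomorphism $V_\alpha\simeq U\times Y_\alpha$ with $Y_\alpha$ a normal affine $T$-surface of complexity one, together with the $U$-torsor $V_0=G/F\to Y_0$ and its two covering $B$-charts $Y_{0,1},Y_{0,2}$. First I would record the elementary structural facts: each $V_\alpha$ is an affine $B$-chart and $V_0$ is $G$-stable, so $V=\bigcup_{ij}V_{ij}\cup V_\infty\cup V_0$ is a $B$-stable open subvariety of $X$, in particular $U$- and $T$-stable, and $T$ normalizes $U$ so that the eventual quotient inherits a $T$-action. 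Since $U\simeq\GG_a$ acts by translation on the first factor of $U\times Y_\alpha$, every $U$-stable open subset of $V_\alpha$ has the form $U\times W$ with $W\subseteq Y_\alpha$ open. Applying this to the overlaps $V_\alpha\cap V_\beta$ (which are $B$-stable, hence $U$-stable, and affine because $X$ is separated and the charts are affine) shows that $(V_\alpha\cap V_\beta)/U$ is realized as an open subvariety of both $Y_\alpha$ and $Y_\beta$.

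Next I would glue. The two realizations of $(V_\alpha\cap V_\beta)/U$ inside $Y_\alpha$ and $Y_\beta$ furnish an isomorphism $\phi_{\beta\alpha}$ between open subvarieties of $Y_\alpha$ and $Y_\beta$; as each $\phi_{\beta\alpha}$ is induced by the identity of $V_\alpha\cap V_\beta\subseteq X$, the cocycle identity $\phi_{\gamma\beta}\circ\phi_{\beta\alpha}=\phi_{\gamma\alpha}$ holds on triple overlaps. This glues the finitely many affine surfaces $(Y_{ij})_{ij},Y_\infty,Y_{0,1},Y_{0,2}$ to a finite-type scheme $Y$, and the maps $\pi_\alpha$ glue to $\pi\colon V\to Y$, which is a $U$-torsor since it is one on each chart. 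The scheme $Y$ is normal (its charts are) and irreducible, and it is rational because it contains the dense open $Y_0=V_0/U\simeq(\AAA^2_k\setminus\{0\})/F$, where $U\backslash\SL_2\simeq\AAA^2_k\setminus\{0\}$ carries a free residual $F$-action and $k(\AAA^2_k)^F$ is rational in dimension two. Finally $\dim Y=\dim V-1=2$, and $T$ acts with one-dimensional general orbits, so $Y$ is a $T$-variety of complexity one.

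For the codimension statement, note that $X\setminus V$ is closed and $B$-stable, so each of its irreducible components of codimension one is, $B$ being connected, a $B$-stable prime divisor; it therefore suffices to show that every $B$-stable prime divisor of $X$ meets $V$. The colors, namely all the exceptional divisors $E^{x_i}$, $E^{x'_i}$ and the parametric divisors $D^x$, are by definition not $G$-stable and hence meet the dense orbit $V_0=G/F$. Every $G$-stable prime divisor lying over an exceptional point, i.e. each $X^{x_i}_j$, meets $V_{ij}$ by construction, and the unique $G$-stable divisor dominating $\PP^1_k$, if present, meets $V_\infty$. Since by Section \ref{SecCombinatorialMaterial} every $G$-stable prime divisor is of one of these two kinds, no $B$-stable prime divisor is contained in $X\setminus V$, whence $X\setminus V$ has codimension $\geq 2$.

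The main obstacle I anticipate is the separatedness of $Y$: a priori the affine charts might glue only to a non-separated prequotient, a phenomenon that does occur for free $\GG_a$-actions. I would resolve this through the Luna--Vust/Timashev combinatorics. The charts $Y_\alpha$ are cut out by the $T$-colored (hyper)cones induced by the hypercones defining the $V_\alpha$, and since the latter are $B$-charts of the single separated $G$-model $X$, the resulting collection satisfies the hyperfan separation axioms. Concretely, for each pair it amounts to checking that the ring map $\Oo(Y_\alpha)\otimes\Oo(Y_\beta)\to\Oo(V_\alpha\cap V_\beta)^U$ is surjective, and this reduces, after taking $U$-invariants, to the analogous surjectivity for the affine charts $V_\alpha\cap V_\beta$ of the separated variety $X$. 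Together with irreducibility and finite type this shows $Y$ is a variety and completes the proof.
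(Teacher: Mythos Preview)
Your overall strategy matches the paper's: glue the local $U$-quotients, verify the $U$-torsor property chart by chart, and check that every $B$-stable prime divisor meets $V$. Your treatment of the gluing, the $U$-torsor, normality, rationality, complexity, and the codimension count is correct and in places more explicit than the paper.

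The genuine gap is in your handling of separatedness of $Y$. You correctly flag this as the main obstacle, but neither of your two proposed resolutions is justified as stated. First, the ring-theoretic reduction ``after taking $U$-invariants'' does not go through: $U\simeq\GG_a$ is unipotent, not reductive, so a surjection of rational $U$-modules need not stay surjective on invariants; moreover the natural map $\Oo(V_\alpha)^U\otimes\Oo(V_\beta)^U\to(\Oo(V_\alpha)\otimes\Oo(V_\beta))^U$ is not surjective in general (already for two copies of $\Oo(U)=k[t]$ with the diagonal translation action, the invariant $t_1-t_2$ is not in the image). So surjectivity of $\Oo(V_\alpha)\otimes\Oo(V_\beta)\to\Oo(V_\alpha\cap V_\beta)$ does not formally yield surjectivity of $\Oo(Y_\alpha)\otimes\Oo(Y_\beta)\to\Oo(Y_\alpha\cap Y_\beta)$. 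Second, your combinatorial claim ``since the $V_\alpha$ are $B$-charts of the single separated $G$-model $X$, the resulting collection satisfies the hyperfan separation axioms'' is an inference, not a check: the chosen $V_\alpha$ are various $B$-charts of $X$ but are not asserted to form a colored hyperfan among themselves, and passing to the $T$-hyperspace of $k(X)^U$ turns colors into ordinary rays, so the fan condition on the $T$-side is not a formal consequence of $X$ being separated.

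The paper closes this gap differently: it works directly in the $T$-hyperspace $(\breve{\Ee}_T,\Vv_T)$ of $k(X)^U$, identifies the hypercone of each $Y_{ij}$ (resp.\ $Y_\infty$, $Y_{0,1}$, $Y_{0,2}$) as the cone spanned by the images of the $B$-stable divisors meeting $V_{ij}$ (resp.\ $V_\infty$, the two charts of $V_0$), and then observes that \emph{by the explicit construction} each such hypercone contributes exactly one new extremal ray beyond the common set $\Dd^B_T$ of images of colors (namely the ray of the single codimension-one $G$-orbit it was built to meet, or the removed color $v_{0,k}$ for $Y_{0,k}$). Hence the proper non-trivial supported faces of these hypercones are cones with disjoint interiors in $\Vv_T$, and Section~\ref{SecCombinatorialMaterial} then gives a genuine (separated) normal $T$-variety of complexity one. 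To complete your argument you should carry out this explicit non-overlap check rather than appeal abstractly to the separatedness of $X$.
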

\begin{proof}
Denote $(\breve{\Ee}_T,\Vv_T)$ the hyperspace of $k(X)^U$ in which live the hypercones defining the $T$-charts $(Y_{ij})_{ij}$, $Y_\infty$. These hypercones are obtained from the colored hypercones associated with the $B$-charts $(V_{ij})_{ij}$, $V_\infty$. Indeed, they are respectively spanned by the $T$-stable divisors in these $T$-charts, and these $T$-stable divisors are the images by $\pi_{ij}$ (resp. $\pi_\infty$) of the respective intersections of the colors and $G$-stable prime divisors of $X$ with $V_{ij}$ (resp. $V_\infty$). We can define the varieties $Y_{0,1}$ and $Y_{0,2}$ in the following way: consider the subset $\Dd^B_T\subset\Vv_T$ of all the $T$-valuations obtained via $\pi_0$ from the colors of $G/F$. Then choose two distinct elements $v_{0,1}$ and $v_{0,2}$ in this set and consider the two varieties defined by the hypercones of type (A) generated respectively by $\Dd^B_T\setminus\lbrace v_{0,1}\rbrace$ and $\Dd^B_T\setminus\lbrace v_{0,2}\rbrace$. The proper non-trivial supported (hyper)faces of the hypercones defining the varieties  $(Y_{ij})_{ij}$, $Y_\infty$, $Y_{0,1}$, $Y_{0,2}$ are cones which by construction don't overlap in $\Vv_T$. By Section \ref{SecCombinatorialMaterial}, the latter varieties glue together into a normal $T$-variety of complexity one. The morphisms $(\pi_{ij})_{ij}$, $\pi_\infty$, $\pi_0$ coincide on intersections so that they glue together. The assertion that  $\pi:X\rightarrow Y$ is a $U$-torsor has already been checked locally above, and it implies that $Y$ is rational. For the last assertion, it suffices to notice that $V$ contains the open $G$-orbit and meets every boundary divisor, whence the claim on the codimension of $X\setminus V$ in $X$.
\end{proof}

\subsection{Iteration of Cox rings}
\label{Sec_Iteration}
In this section, we use the construction of the preceding section to uncover a connection between iterations of Cox rings for $X$ and $Y$. We first recall the definition of an almost principal bundle under an algebraic group. Hashimoto introduced this notion in \cite[Def. 0.4]{Hashimoto} where he systematically studies properties preserved by almost principal bundles. 

\begin{defn}\label{DefAlmostTorsor}
Let $H$ be an algebraic group, and let $Z_1,Z_2$ be normal $H$-varieties such that $H$ acts trivially on $Z_2$. We say that a $H$-equivariant morphism $\varphi:Z_1\rightarrow Z_2$ is an \textit{almost principal $H$-bundle} over $Z_2$ if there exists $H$-stable open subvarieties $V_1\subset Z_1$, $V_2\subset Z_2$ whose respective complements are of codimension $\geq 2$ and such that $\varphi$ induces a $H$-torsor $V_1\rightarrow V_2$.
\end{defn}

\begin{ex}
In the framework of Cox rings, an almost principal bundle $\varphi:Z_1\rightarrow Z_2$ under a diagonalizable group such that
\begin{itemize}
\item $Z_2$ is a normal variety with finitely generated class group and only constant invertible regular functions,
\item $\varphi$ is a good quotient,
\item $Z_1$ is a normal variety with only constant invertible homogeneous regular functions,
\end{itemize}
is precisely a \textit{quotient presentation} in the sense of \cite[4.2.1.1]{coxrings}. For example, the structural morphism of the characteristic space $\hat{Z_2}\rightarrow Z_2$, if it exists, is a quotient presentation of $Z_2$.
\end{ex}

In \cite{HausenIteration}, the authors introduce the notion of \textit{iteration of Cox rings}:  Let $Z$ be a normal variety with finitely generated Cox ring. If the total coordinate space $\tilde{Z}$ has non-trivial class group and satisfies $\Oo(\tilde{Z})^*\simeq k^*$, then it has a non-trivial well-defined Cox ring. If the latter is finitely generated, we get a new total coordinate space $\tilde{Z}^{(2)}$, and so on. This iteration process yields a sequence of Cox rings which stops if and only if one of the following cases occurs at some step:
\begin{itemize}
\item we obtain a total coordinate space whose Cox ring is not well defined (i.e. there exists $n\geq 0$ such that $\clg(\tilde{Z}^{(n)})$ has a non-trivial torsion subgroup, and $\Oo(\tilde{Z}^{(n)})^*\not\simeq k^*$).
\item we obtain a total coordinate space whose Cox ring is not finitely generated.
\item we obtain a factorial total coordinate space (i.e. with trivial class group).
\end{itemize}
If we never fall in one of the cases above, $Z$ is said to have \textit{infinite iteration of Cox rings}. Otherwise, $Z$ is said to have \textit{finite iteration of Cox rings}, and the last obtained Cox ring is the \textit{master Cox ring}. By virtue of \cite[3.4.1]{avezier_EqCoxRings}, $X$ admits finite iteration of Cox rings with a factorial finitely generated master Cox ring $\tilde{X}^{(m)}$, $m\geq 1$. In the following proposition, $Y=\tilde{Y}^{(0)}$ is the $T$ variety of complexity one constructed in the preceding section.

\begin{prop}\label{CorAlmostUTorsor}
For $1\leq i\leq m$, the categorical quotient of $\tilde{X}^{(i)}$ by $U$ identifies $\tilde{X}^{(i)}//U$ with the total coordinate space of $\tilde{Y}^{(i-1)}$. Moreover, the categorical quotient
\begin{center}
$\pi_i:\tilde{X}^{(i)}\xrightarrow{//U}\tilde{Y}^{(i)}$
\end{center}
is an almost principal $U$-bundle. 
\end{prop}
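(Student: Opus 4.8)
The plan is to argue by induction on $i$, the engine being a \emph{permanency} principle: the quotient-presentation structure relating $\tilde{X}^{(i)}$ to $\tilde{Y}^{(i)}$ reproduces itself after one more application of the Cox construction. Two facts are used throughout. First, every total coordinate space in the two towers is a normal variety with finitely generated class group and only constant invertible homogeneous regular functions, the finite generation being guaranteed by \cite[3.5.1]{avezier_EqCoxRings}. Second, the $\SL_2$-action propagates up the tower: since $\SL_2$ is semisimple and simply connected, divisorial sheaves on each $\tilde{X}^{(j)}$ admit canonical $\SL_2$-linearizations, so the action lifts to $\cox(\tilde{X}^{(j)})=\Oo(\tilde{X}^{(j+1)})$ and $U$ always acts as the restriction of an $\SL_2$-action. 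In particular each $\tilde{X}^{(i)}$ is a normal affine $\SL_2$-variety and the categorical quotient $\tilde{X}^{(i)}//U=\spec\bigl(\cox(\tilde{X}^{(i-1)})^U\bigr)$ is well defined.

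\textbf{Base case $i=1$.} The $\SL_2$-algebra structure on $\cox(X)$ gives $\tilde{X}//U=\spec(\cox(X)^U)=\spec(\cox(Y))=\tilde{Y}$, which is the total coordinate space of $\tilde{Y}^{(0)}=Y$, using $\cox(Y)\simeq\cox(X)^U$ from Section \ref{SecUTorsor}. To see that $\pi_1$ is an almost principal $U$-bundle I pass to characteristic spaces: by the cartesian square (\ref{Eq_Diag_CartesianUTorsors}) the morphism $\hat{V}\to\hat{Y}$ is a $U$-torsor, and since $V\subset X$ has complement of codimension $\geq 2$, so do $\hat{V}\subset\hat{X}$ and $\hat{X}\subset\tilde{X}$ (the affinization has codimension $\geq 2$ complement by definition, and the characteristic-space morphism preserves codimension $\geq 2$ of complements, see \cite[Sec. 1.6]{coxrings}); likewise $\hat{Y}\subset\tilde{Y}$. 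Thus $\hat{V}\to\hat{Y}$ is a $U$-torsor over codimension $\geq 2$ open subvarieties of $\tilde{X}$ and $\tilde{Y}$, which is exactly Definition \ref{DefAlmostTorsor}.

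\textbf{Inductive step.} Assume $\pi_i:\tilde{X}^{(i)}\to\tilde{Y}^{(i)}$ is an almost principal $U$-bundle realizing $\tilde{Y}^{(i)}=\tilde{X}^{(i)}//U$ as the total coordinate space of $\tilde{Y}^{(i-1)}$; this is a quotient presentation in the sense recalled after Definition \ref{DefAlmostTorsor}, with underlying $U$-torsor $V_i\to V_i'$. First, the chain $\clg(\tilde{X}^{(i)})\simeq\clg^U(\tilde{X}^{(i)})\simeq\pic^U(V_i)\simeq\pic(V_i')\simeq\clg(\tilde{Y}^{(i)})$, obtained exactly as in Section \ref{SecUTorsor} from \cite[2.2(1), 2.2.2, 2.2.4, 2.5.2]{avezier_EqCoxRings} and the fact that $U$ is unipotent (so $\hat{U}=0$ and $\pic(U)=0$), identifies the two grading groups. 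Second, applying the characteristic-space formalism of \cite[Sec. 2.10]{avezier_EqCoxRings} to this torsor yields a cartesian square as in (\ref{Eq_Diag_CartesianUTorsors}) for $\tilde{X}^{(i)},\tilde{Y}^{(i)}$; taking global sections and invoking the permanency of $U$-invariants \cite[D.5]{TimashevBook} gives $\cox(\tilde{X}^{(i)})^U\simeq\cox(\tilde{Y}^{(i)})$ as graded rings. Equivalently $\tilde{X}^{(i+1)}//U=\spec\bigl(\cox(\tilde{X}^{(i)})^U\bigr)=\spec\bigl(\cox(\tilde{Y}^{(i)})\bigr)=\tilde{Y}^{(i+1)}$, the total coordinate space of $\tilde{Y}^{(i)}$. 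Repeating the codimension argument of the base case, now with the new cartesian square of characteristic spaces inside $\tilde{X}^{(i+1)}$ and $\tilde{Y}^{(i+1)}$, shows that $\pi_{i+1}$ restricts to a $U$-torsor over codimension $\geq 2$ opens, hence is an almost principal $U$-bundle. Since $\pi_{i+1}$ is again a quotient presentation, the induction continues and terminates at $i=m$ with the factorial master Cox rings $\tilde{X}^{(m)},\tilde{Y}^{(m)}$.

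\textbf{Main obstacle.} The delicate point is the bookkeeping of codimensions through the two operations that are iterated, namely the affinization $\hat{Z}\hookrightarrow\tilde{Z}$ and the good quotient $\tilde{Z}\to\tilde{Z}//U$. What must be checked at every level is that the locus over which $\pi_{i+1}$ is a genuine $U$-torsor, the image of the characteristic space, still has complement of codimension $\geq 2$ after passing to total coordinate spaces, and that the quotient-presentation property is preserved so that the \emph{same} lemma applies again. This rests on the fibres of the relevant good quotients being orbits of diagonalizable and unipotent groups, hence equidimensional over the open loci in question, so that no codimension is lost; combined with the normality and finite generation inherited from \cite[3.5.1]{avezier_EqCoxRings}, this closes the induction.
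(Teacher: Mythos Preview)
Your proof is correct and follows essentially the same route as the paper's: both start from the cartesian square (\ref{Eq_Diag_CartesianUTorsors}) for the base case, identify $\clg(\tilde{X}^{(i)})\simeq\clg(\tilde{Y}^{(i)})$ via \cite[2.2(1), 2.5.2]{avezier_EqCoxRings}, invoke the characteristic-space formalism of \cite[Sec.~2.10]{avezier_EqCoxRings} to obtain a new cartesian square one level up, and iterate. The only cosmetic difference is that the paper computes the identification $\cox(\tilde{X}^{(i)})^U\simeq\cox(\tilde{Y}^{(i)})$ by manipulating the Cox sheaf directly (writing $\Rr_{\tilde{X}^{(i)}}=\bigoplus_{[\Ff]\in\clg(\tilde{Y}^{(i)})}\pi_i^*\Ff$ and taking $U$-invariants of the pushforward), whereas you pass through the characteristic spaces and take global sections; these are two phrasings of the same computation.
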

\begin{proof}
By virtue of the cartesian square (\ref{Eq_Diag_CartesianUTorsors}), the categorical quotient $\pi_1:\tilde{X}\xrightarrow{//U}\tilde{X}//U$ is an almost principal $U$-bundle, and $\tilde{X}//U$ identifies with $\tilde{Y}$. Consider the categorical quotient
\begin{center}
$\pi_2:\tilde{X}^{(2)}\xrightarrow{//U}\tilde{X}^{(2)}//U$,
\end{center}
where both are affine normal varieties (\cite[D.5]{TimashevBook}). We claim that $\tilde{X}^{(2)}//U$ is the total coordinate space of $\tilde{Y}$. Indeed, $\tilde{X}^{(2)}//U$ is naturally a variety over $\tilde{Y}$ with an affine structural morphism. Moreover, the morphism $\pi_2$ can be viewed as the morphism corresponding to the graded $\Oo_{\tilde{Y}}$-algebras morphism 
\begin{center}
$(\pi_{1*}\Rr_{\tilde{X}})^U\xhookrightarrow{}\pi_{1*}\Rr_{\tilde{X}}$.
\end{center}
Using the exact sequence \cite[2.2 (1)]{avezier_EqCoxRings}, and the Proposition \cite[2.5.2]{avezier_EqCoxRings}, we can write $\Rr_{\tilde{X}}=\bigoplus_{[\Ff]\in\clg(\tilde{Y})}\pi_1^*\Ff$, which yields an isomorphism of graded $\Oo_{\tilde{Y}}$-algebras
\begin{center}
$(\pi_{1*}\Rr_{\tilde{X}})^U\simeq \Rr_{\tilde{Y}}$,
\end{center}
again by \cite[Proposition 2.5.2]{avezier_EqCoxRings}. This proves the claim, and we have a cartesian square (see \cite[Sec. 2.10]{avezier_EqCoxRings})
\begin{center}
	\begin{tikzcd}
		\tilde{X}^{(2)} \arrow[r,"\pi_2"] \arrow[d,  swap, "//\Gamma_{\clg(\tilde{X})}"] & \tilde{Y}^{(2)} \arrow[d, "//\Gamma_{\clg(\tilde{Y})}"]\\
		\tilde{X} \arrow[r, "\pi_1"] & \tilde{Y},
	\end{tikzcd}
\end{center}
where horizontal arrows are almost principal $U$-bundles, and vertical arrows are structural morphisms of characteristic spaces. Iterating this construction, we obtain the result.
\end{proof}

\begin{cor}\label{Cor_Iteration_TvarType2}
For $i=1,...,m$, the total coordinate space $\tilde{Y}^{(i)}$ is an affine normal rational variety of complexity one under a torus action, and the regular invariant functions on $\tilde{Y}^{(i)}$ are constant.
\end{cor}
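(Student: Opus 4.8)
The plan is to argue by induction on $i$, carrying along the full list of properties so that each conclusion feeds the hypotheses of the next step: I will show that every $\tilde{Y}^{(i)}$ with $1\leq i\leq m$ is a normal affine rational variety of complexity one under a torus $\TT_i$, with $\Oo(\tilde{Y}^{(i)})^*\simeq k^*$ and $\Oo(\tilde{Y}^{(i)})^{\TT_i}\simeq k$. The engine is the two-sided structure theory of Cox rings of complexity one torus varieties recalled after Construction \ref{ConsRingFactoC1} (\cite[3.4.2 and 3.4.3]{coxrings}): on the one hand, the Cox ring of a normal rational variety of complexity one under a torus with only constant invertible and only constant invariant regular functions is an algebra $R(A,P_0)$; on the other hand, the spectrum of any $R(A,P_0)$ is a normal affine variety of complexity one under the connected component of a diagonalizable group (hence under a torus), whose only invariant regular functions are the constants. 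The key structural input linking this to our situation is \ref{CorAlmostUTorsor}, which identifies $\tilde{Y}^{(i)}$ with the total coordinate space of $\tilde{Y}^{(i-1)}$, i.e. $\tilde{Y}^{(i)}=\spec\cox(\tilde{Y}^{(i-1)})$.

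For the base case $i=1$ I would use that $\cox(Y)\simeq\cox(X)^U$ (Section \ref{SecUTorsor}) together with \ref{Prop_RAP0}, whose hypothesis $(\star)$ holds because $X$ is almost homogeneous (\ref{Rem_ThmUinvCoxG}); thus $\cox(Y)$ is an algebra $R(A,P_0)$. By \ref{CorAlmostUTorsor} this gives $\tilde{Y}^{(1)}=\spec\cox(Y)=\spec R(A,P_0)$, so the $R(A,P_0)$-theory furnishes at once that $\tilde{Y}^{(1)}$ is normal, affine, of complexity one under a torus $\TT_1$, and has constant $\TT_1$-invariant functions. That $\Oo(\tilde{Y}^{(1)})^*\simeq k^*$ I would deduce from the fact that the Cox ring of a variety with only constant invertible functions has only constant invertible functions (\cite[1.5.2]{coxrings}); that $Y$ itself has this property is immediate since $\Oo(Y)^*\subseteq\Oo(X)^*\simeq k^*$.

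The inductive step is then formal: assuming the assertion for $\tilde{Y}^{(i-1)}$ with $2\leq i\leq m$, this variety satisfies exactly the hypotheses of the structure theorem, so $\cox(\tilde{Y}^{(i-1)})\simeq R(A,P_0)$ for suitable input data; since $\tilde{Y}^{(i)}=\spec\cox(\tilde{Y}^{(i-1)})$ by \ref{CorAlmostUTorsor}, the same $R(A,P_0)$-theory yields all the required properties for $\tilde{Y}^{(i)}$ (normal, affine, complexity one under a torus $\TT_i$, constant $\TT_i$-invariants, and $\Oo^*\simeq k^*$), closing the induction.

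The one point not read off directly from $R(A,P_0)$, and the genuine obstacle, is rationality. I would resist descending it through the quotient $\tilde{Y}^{(i)}=\tilde{X}^{(i)}//U$: cancellation of an affine line does not preserve rationality in higher dimension, so the almost principal $U$-bundle of \ref{CorAlmostUTorsor} does not suffice. Instead I would exploit the complexity one structure intrinsically. The rational quotient of $\tilde{Y}^{(i)}$ by $\TT_i$ is inherited, through the successive Cox tori, from the rational quotient of $Y$ by $T$, which is the curve $B\backslash\SL_2\simeq\PP^1_k$; hence the base curve is $\PP^1_k$ at every stage. As $\TT_i$ acts with generic orbit of codimension one and $\TT_i$ is a torus (a special group), a dense open subset of $\tilde{Y}^{(i)}$ is a trivial $\TT_i$-torsor over an affine open of $\PP^1_k$, and is therefore isomorphic to the product of a torus with an open subset of $\PP^1_k$. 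This exhibits $\tilde{Y}^{(i)}$ as rational and completes the argument.
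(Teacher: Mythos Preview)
Your argument is correct and reaches the same conclusion, but it proceeds along a genuinely different axis from the paper. The paper's proof is a one-line transfer: it uses that $\tilde{X}^{(i)}\to\tilde{Y}^{(i)}$ is an almost principal $U$-bundle and that $\tilde{X}^{(i)}$ is almost homogeneous of complexity one under a connected reductive group $G\times\TT_i$. All four properties of $\tilde{Y}^{(i)}$ then descend from the $G\times\TT_i$-structure on $\tilde{X}^{(i)}$; in particular rationality comes not from cancellation but from the fact that the $U$-quotient of the dense $(G\times\TT_i)$-orbit is itself an explicit rational variety. You instead bypass the $G$-action entirely and run an induction purely on the torus side, feeding $\tilde{Y}^{(i-1)}$ into the $R(A,P_0)$ machine at each step and reading off normality, complexity one, and constant invariants from \cite[3.4.2--3.4.3]{coxrings}. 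Your separate treatment of rationality via the base curve is sound in spirit; two small remarks: the generic $\TT_i$-orbit need not be a $\TT_i$-torsor over its image (there may be a finite generic stabiliser), but replacing $\TT_i$ by the quotient torus fixes this without touching the conclusion; and the claim that the base curve stays $\PP^1_k$ is in fact already packaged in the $R(A,P_0)$ description itself (the input matrix $A$ lives in $k^2$), so you could have absorbed rationality into the same inductive step rather than arguing it separately. Your route is longer but has the virtue of living entirely in the world of complexity-one torus varieties; the paper's is shorter but leans on the equivariant structure of $\tilde{X}^{(i)}$ built up in the preceding section.
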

\begin{proof}
Everything stems from the fact that $\tilde{X}^{(i)}\xrightarrow{//U}\tilde{Y}^{(i)}$ is an almost principal bundle, and that $\tilde{X}^{(i)}$ is almost homogeneous of complexity one under the action of a connected reductive group of the form $G\times\
\TT_i$.
\end{proof}

\begin{cor}
There is a commutative diagram
\begin{center}
\begin{tikzcd}
\tilde{X}^{(m)} \arrow[d] \arrow[r] & ... \arrow[r] & \tilde{X}^{(2)} \arrow[r] \arrow[d] & \tilde{X} \arrow[d] \\
\tilde{Y}^{(m)} \arrow[r]           & ... \arrow[r] & \tilde{Y}^{(2)} \arrow[r]           & \tilde{Y},          
\end{tikzcd}
\end{center}
where $\tilde{X}^{(m)},\tilde{Y}^{(m)}$ are factorial, the horizontal arrows are structural morphisms of characteristic spaces, the vertical arrows are almost principal $U$-bundles, and all squares are cartesian.
\end{cor}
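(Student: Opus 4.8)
The plan is to read the diagram off Proposition~\ref{CorAlmostUTorsor} and its proof one square at a time, so that almost every label is already justified; the single genuinely new point to settle is the factoriality of $\tilde{Y}^{(m)}$. First I would pin down the indexing. The top row $\tilde{X}^{(m)}\to\dots\to\tilde{X}^{(2)}\to\tilde{X}$ is exactly the iteration-of-Cox-rings sequence of $X$: by construction each $\tilde{X}^{(i)}$ is the total coordinate space of $\tilde{X}^{(i-1)}$, and the horizontal arrow $\tilde{X}^{(i)}\to\tilde{X}^{(i-1)}$ is the good quotient by $\Gamma_{\clg(\tilde{X}^{(i-1)})}$, i.e. the structural morphism of the characteristic space (as in the cartesian square of the proof of Proposition~\ref{CorAlmostUTorsor}). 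The same reading applies to the bottom row: by Proposition~\ref{CorAlmostUTorsor} each $\tilde{Y}^{(i)}=\tilde{X}^{(i)}/\!/U$ is the total coordinate space of $\tilde{Y}^{(i-1)}$, so $\tilde{Y}^{(m)}\to\dots\to\tilde{Y}$ is a segment of the iteration sequence of $Y=\tilde{Y}^{(0)}$, with horizontal maps again the structural morphisms of the characteristic spaces. That these Cox rings are all well defined and finitely generated, so that the sequence makes sense, is guaranteed by Corollary~\ref{Cor_Iteration_TvarType2}, which says the $\tilde{Y}^{(i)}$ are normal rational affine complexity-one varieties with only constant invariant regular functions.

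Next I would record the vertical arrows and the cartesian property. The vertical maps are the categorical quotients $\pi_i:\tilde{X}^{(i)}\xrightarrow{/\!/U}\tilde{Y}^{(i)}$, which Proposition~\ref{CorAlmostUTorsor} identifies as almost principal $U$-bundles; and the cartesian square relating $\pi_i$, $\pi_{i-1}$ and the two structural morphisms is precisely the square produced at the corresponding step in the proof of Proposition~\ref{CorAlmostUTorsor}. Concatenating these squares for $i=1,\dots,m$ assembles the full commutative ladder with every square cartesian. At this stage the only label still to be justified is the factoriality of the two leftmost corners: factoriality of $\tilde{X}^{(m)}$ is just the statement that it is the factorial master Cox ring of $X$, which holds by \cite[3.4.1]{avezier_EqCoxRings} as recalled above.

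The remaining substantive step is to deduce that $\tilde{Y}^{(m)}$ is factorial as well, which I would do by transferring triviality of the class group across the almost principal $U$-bundle $\pi_m$. By Definition~\ref{DefAlmostTorsor} there are $U$-stable open subsets $V_1\subset\tilde{X}^{(m)}$ and $V_2\subset\tilde{Y}^{(m)}$ whose complements have codimension $\geq 2$ and such that $V_1\to V_2$ is a genuine $U$-torsor. Removing a closed subset of codimension $\geq 2$ from a normal variety leaves the class group unchanged, so $\clg(\tilde{X}^{(m)})\cong\clg(V_1)$ and $\clg(\tilde{Y}^{(m)})\cong\clg(V_2)$; and since $U$ is unipotent, its torsors are Zariski-locally trivial with affine-space fibres, so pullback along $V_1\to V_2$ is an isomorphism $\clg(V_2)\xrightarrow{\sim}\clg(V_1)$, exactly as in the chain $\clg(X)\simeq\pic(Y)$ at the start of Section~\ref{SecUTorsor} and by \cite[2.5.2]{avezier_EqCoxRings}. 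Combining these identifications gives $\clg(\tilde{Y}^{(m)})\cong\clg(\tilde{X}^{(m)})=0$; as $\tilde{Y}^{(m)}$ is normal, a trivial class group makes it factorial. (This incidentally shows that $Y$, too, has finite iteration of Cox rings with master Cox ring $\tilde{Y}^{(m)}$.)

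The only real obstacle I anticipate is bookkeeping in this last transfer: one has to check that the excision isomorphism and the torsor-pullback isomorphism are compatible, which they are since both are induced by restriction of divisorial sheaves, and that the codimension-$\geq 2$ hypothesis is available precisely at the master level $m$, which is ensured because $\pi_m$ is an almost principal bundle by Proposition~\ref{CorAlmostUTorsor}. Everything else is a formal concatenation of the cartesian squares already constructed, together with the factoriality of $\tilde{X}^{(m)}$ imported from \cite[3.4.1]{avezier_EqCoxRings}.
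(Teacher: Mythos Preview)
Your proposal is correct and follows exactly the approach the paper intends: the corollary is stated without proof because the diagram, the nature of the arrows, and the cartesian squares are read directly off Proposition~\ref{CorAlmostUTorsor} and its proof, with factoriality of $\tilde{X}^{(m)}$ coming from \cite[3.4.1]{avezier_EqCoxRings}. Your one added detail---transferring triviality of the class group across the almost principal $U$-bundle $\pi_m$ to get factoriality of $\tilde{Y}^{(m)}$---is the right way to fill in what the paper leaves implicit, and the argument via excision plus the $U$-torsor isomorphism on class groups is sound.
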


In \cite{Wrobel1}, Hausen and Wrobel prove that a trinomial variety obtained from Construction \ref{ConsRingFactoC1} admits finite iteration of Cox rings with a finitely generated factorial master Cox ring if and only if it is rational and the tuple $(\mathfrak{l}_0,...,\mathfrak{l}_r)$ is Platonic, where $\mathfrak{l}_i$ is the greatest common divisor of the integers appearing in the exponent vector $l_i$. It is immediate to check that $\tilde{Y}$ indeed satisfies these properties. Also by \cite[Cor. 1.4]{Wrobel1}, the length of the Cox ring iteration sequence of $Y$ (hence of $X$) is determined by the tuple $(\mathfrak{l}_0,...,\mathfrak{l}_r)$, and is bounded by $4$. Below, we give another proof for the bound $m\leq 4$ which gives more precise bounds depending on the finite subgroup $F\subset G$, and uses arguments of different nature (\ref{Prop_BoundedIteration}). This yields interesting intermediate results (\ref{Lem_Iteration_Cyclic}, \ref{Lem_TorsionSubgroup_Embeds_in_Hat}, \ref{Lem_OrderPicX_Tor_AtMostTwo}, \ref{Lem_Contradiction_QuotienPresentation}). We start by recalling a useful construction for the study of iteration (see also \cite{LBraun}, and \cite[Sec. 3.5]{avezier_EqCoxRings}).

\begin{cons}\label{ConsIteration}
Using the isomorphism $\pic^G(G/F)\simeq \hat{F}$, and that $G$ is semisimple and simply connected, the localization exact sequence \cite[2.2.4]{avezier_EqCoxRings} applied to the open orbit in $X$ reads
\begin{equation}\label{Eq_ExactSequence1}
0\rightarrow\ZZ^{N+N'}\rightarrow\clg(X)\rightarrow \hat{F}\rightarrow 0.
\end{equation}
From this sequence we obtain that $\clg(X)_{\rm tor}$ embeds in $\hat{F}$ via restriction of divisorial sheaves to $G/F$. Hence, $\clg(X)_{\rm tor}$ is canonically identified with a subgroup of $X^*(F/D(F))\simeq F/D(F)$, where $D(F)$ denotes the derived subgroup of $F$. The exact sequence
\begin{center}
$0\rightarrow\clg(X)_{\rm tor}\rightarrow\clg(X)\rightarrow M\rightarrow 0$
\end{center}
translates into the factorization
\begin{center}
$\hat{X}\xrightarrow{g_1=/\TT_1} X'\xrightarrow{f_1=/\Gamma_{\clg(X)_{\rm tor}}} X$
\end{center}
of the characteristic space $q:\hat{X}\rightarrow X$, where $\TT_1:=\Gamma_M$ is a torus, and $X'$ is a normal  $G/F_1$-embedding. Similarly to \cite[2.5.12]{avezier_EqCoxRings}, one shows that $F_1$ is the intersection of the kernels of characters in $\hat{F}$ corresponding to elements of $\clg(X)_{\rm tor}$. This yields a canonical isomorphism $\clg(X)_{\rm tor}\simeq X^*(F/F_1)$. Applying \cite[2.10.1]{avezier_EqCoxRings}, we obtain that $\hat{X}^{(2)}$ is a characteristic space of both $\hat{X}$ and $X'$. Iterating this construction, one obtains a commutative diagram
\begin{center}
\begin{tikzcd}
\hat{X}^{(m)} \arrow[r, "q_m"] \arrow[d, "g_m"] \arrow[rd, "q'_m"] & ... \arrow[r, "q_3"] & \hat{X}^{(2)} \arrow[d, "g_2"] \arrow[r, "q_2"] \arrow[rd, "q'_2"] & \hat{X} \arrow[rd, "q"] \arrow[d, "g_1"] &                     \\
X^{'(m)} \arrow[r, "f_m"]                                          & ... \arrow[r, "f_3"] & X^{'(2)} \arrow[r, "f_2"]                                          & X' \arrow[r, "f_1"]                      & X                   \\
G/F_m \arrow[r] \arrow[u, hook]                                    & ... \arrow[r]        & G/F_2 \arrow[r] \arrow[u, hook]                                    & G/F_1 \arrow[r] \arrow[u, hook]          & G/F \arrow[u, hook]
\end{tikzcd}
\end{center}
where the $q_i, q'_i$ and $g_m$ are structural morphisms of characteristic spaces, the $f_i$ are quotient presentations by the finite diagonalizable groups $F_i/F_{i-1}$, and $\hat{X}^{(m)}$ is a factorial characteristic space. Notice that $X^{'(m)}=X^{'(m-1)}$ if $\pic(X^{'(m-1)})$ is torsion-free, and $\hat{X}^{(m)}=X^{'(m)}$ if $\pic(X^{'(m-1)})$ is finite. This latter case occurs if and only if $X=G/F$. The sequence of subgroups $F,F_1...,F_{m}$ yields a normal series of $F$ with abelian quotients, and each $X^{'(i)}$ is a normal $G/F_i$-embedding. 
\end{cons}

\begin{prop}\label{Prop_Ramification}
Suppose that $X$ is a normal $G/\mu_n$-embedding, and let $d$ be the order of the torsion subgroup of $\clg(X)$. Denote $\bar{n}:=n/2$ if $n$ is even, and $\bar{n}:=n$ otherwise. Similarly let $\overline{n/d}:=n/2d$ if $n/d$ is even, and $\overline{n/d}:=n/d$ otherwise. Finally, let $\tilde{d}:=\frac{\bar{n}}{\overline{n/d}}$. Consider a point $x\in\PP^1_k/\mu_n$, and a $B$-stable prime divisor $E^x$ in $X$ such that $\pi(E^x)=x$. Then,
$$
q^*(E^x)=
\begin{cases}
\hat{E}^x_1$, if $x=x_0$ or $x=x_\infty,\\
\hat{E}^x_1+...+\hat{E}^x_{\tilde{d}}$, otherwise,$
\end{cases}
$$
where the $\hat{E}^x_i$ are pairwise distinct $B$-stable prime divisors in $\hat{X}$. 
\end{prop}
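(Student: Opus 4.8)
The plan is to exploit the factorization of the characteristic space morphism $q$ furnished by Construction \ref{ConsIteration}. Write $q=f_1\circ g_1$, where $g_1\colon\hat{X}\to X'$ is the quotient by the torus $\TT_1=\Gamma_M$ and $f_1\colon X'\to X$ is the quotient presentation by $\Gamma_{\clg(X)_{\rm tor}}$, with $X'$ a normal $G/F_1$-embedding. Since by \eqref{Eq_ExactSequence1} the group $\clg(X)_{\rm tor}$ is the order-$d$ subgroup of the cyclic group $\hat{\mu_n}\simeq\ZZ/n\ZZ$, the intersection-of-kernels description of $F_1$ gives $F_1=\mu_{n/d}$ and $\Gamma_{\clg(X)_{\rm tor}}\simeq\mu_d$. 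Thus $q^*=g_1^*\circ f_1^*$, and the strategy is to show that all the ramification is carried by the finite cover $f_1$, while $g_1$ merely pulls a prime divisor back to a single prime divisor.

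I would first dispose of $g_1$. As $\TT_1$ is a torus, the good quotient $g_1$ is a torsor over a $\TT_1$-stable open subset of $X'$ with complement of codimension $\geq 2$; being a connected-group torsor it is unramified in codimension one, and, $\TT_1$ being connected, the preimage of a prime divisor meeting this open subset is again irreducible and reduced. Hence $g_1^*$ takes a prime divisor to a prime divisor of multiplicity one, and it suffices to compute $f_1^*(E^x)$, after which the stated $q^*(E^x)$ is obtained by replacing each prime component of $f_1^*(E^x)$ by its single reduced $g_1$-preimage. For the cover $f_1$ itself, I would use that it is $G$-equivariant and covers the map $p\colon\PP^1_k/\mu_{n/d}\to\PP^1_k/\mu_n$ induced by $\mu_{n/d}\subset\mu_n$, so that $\pi\circ f_1=p\circ\pi'$. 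Since $\mu_n$ acts on $\PP^1_k$ through $\mu_{\bar n}$ (the scalar $-I$ acting trivially), $p$ is the cyclic cover $\PP^1_k/\mu_{\overline{n/d}}\to\PP^1_k/\mu_{\bar n}$ of degree $\bar n/\overline{n/d}=\tilde{d}$, totally ramified over $x_0,x_\infty$ and \'etale elsewhere (Riemann--Hurwitz, or \ref{SecAppendixCyclic}). Consequently a non-exceptional $x$ has exactly $\tilde{d}$ non-exceptional preimages $x'_1,\dots,x'_{\tilde{d}}$, while over $x_0$ (resp. $x_\infty$) there is a single exceptional point $x'_0$ (resp. $x'_\infty$) with $p^*(x_0)=\tilde{d}\,x'_0$; as $f_1$ sends colors to colors, the color $E^x$ has as $f_1$-preimage precisely the colors lying over these points.

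The crux is pinning down the ramification index of $f_1$ along each preimage, which I would do by comparing vanishing orders of the section $a$ (and of $\beta a-\alpha b$). On $X$ one has $\divi_X(a)=\pi^*(x_0)=\bar n\,E^{x_0}+\sum_j h_{0j}X^{x_0}_j$, whereas on $X'$, using $\pi'^*(x'_0)=\overline{n/d}\,E^{x'_0}+\cdots$ together with $\divi_{X'}(f_1^*a)=f_1^*\pi^*(x_0)=\pi'^*p^*(x_0)=\tilde{d}\,\pi'^*(x'_0)$, the multiplicity of the exceptional color $E^{x'_0}$ in $\divi_{X'}(f_1^*a)$ equals $\tilde{d}\,\overline{n/d}=\bar n$. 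Writing $f_1^*E^{x_0}=e_0\,E^{x'_0}$ (with $E^{x'_0}$ the unique color over $x'_0$, receiving no contribution from the $G$-stable $X^{x_0}_j$) forces $\bar n\,e_0=\bar n$, i.e. $e_0=1$; thus $f_1^*E^{x_0}=E^{x'_0}$ is reduced, and likewise at $x_\infty$. The same computation with the parametric section $\beta a-\alpha b$, of vanishing order one on $X$, over a non-exceptional $x$ where $p$ is \'etale gives $f_1^*E^{x}=E^{x'_1}+\cdots+E^{x'_{\tilde{d}}}$ with all multiplicities one. Applying $g_1^*$ then yields exactly the asserted formula.

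The main obstacle is this third step: one must verify that the total ramification of the \emph{curve} cover $p$ at $x_0,x_\infty$ is precisely compensated by the drop in multiplicity of the exceptional color ($\overline{n/d}$ against $\bar n$), so that the \emph{threefold} cover $f_1$ becomes unramified along $E^{x_0}$. Keeping careful track of which prime divisors are colors and which are $G$-stable, so that $E^{x'_0}$ receives no contribution from the $X^{x_0}_j$ in the multiplicity comparison, is what makes the index computation unambiguous.
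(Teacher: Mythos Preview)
Your proposal is correct and follows the same overall strategy as the paper: factor $q=f_1\circ g_1$ via Construction~\ref{ConsIteration}, observe that the torus quotient $g_1$ pulls prime divisors back to prime divisors with multiplicity one, and reduce the computation of $f_1^*(E^x)$ to the cyclic curve cover $p\colon\PP^1_k/\mu_{n/d}\to\PP^1_k/\mu_n$ of degree $\tilde d$.

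The difference lies in how the pullback $f_1^*(E^x)$ is actually determined. The paper simply notes that $f_1$ is \'etale (a quotient by a finite diagonalizable group in characteristic zero), so every multiplicity in $f_1^*(E^x)$ is automatically $1$; it then counts the components by mapping the transitive $\mu_d$-set $\Sigma_x$ of components onto the transitive $\mu_d$-set $p^{-1}(x)$ and arguing that the fibers of this map are singletons via the injectivity of $\varrho\colon\Dd^B\to\breve\Ee$ for almost homogeneous $G$-threefolds. You instead compute the multiplicities explicitly by comparing the divisor of $a$ (resp.\ $\beta a-\alpha b$) on $X$ and on $X'$, separating the color part from the $G$-stable part to isolate the coefficient of $E^{x'_0}$; this separation is what gives you, in one stroke, both the multiplicity $1$ and the component count. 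Your ``compensation'' calculation (the curve ramification $\tilde d$ at $x_0$ being absorbed by the drop $\bar n\mapsto\overline{n/d}$ in the color multiplicity) is precisely the computational shadow of the paper's \'etaleness observation, which bypasses it entirely. Conversely, your color/$G$-stable separation is doing implicitly what the paper's $\varrho$-injectivity argument does explicitly.
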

\begin{proof}
Consider the commutative square
\begin{center}
\begin{tikzcd}
X' \arrow[r, "\pi_1", dashed] \arrow[d, phantom] \arrow[d, "f_1"] & \PP^1_k/\mu_{n/d} \arrow[d, "\varphi_1"] \\
X \arrow[r, "\pi", dashed]                                       & \PP^1_k/\mu_n,           
\end{tikzcd}
\end{center}
where $\pi_1,\pi$ are the rational quotients by $B$, and $\varphi_1$ is the geometric quotient of $\PP^1_k/\mu_{n/d}$ by $\mu_d$. In view of \ref{SecAppendixCyclic}, we have 
$$
\varphi_1^*(x)=
\begin{cases}
\tilde{d}x'_1$, if $x=x_0$ or $x=x_\infty,\\
x'_1+...+x'_{\tilde{d}},$ otherwise,$
\end{cases}
$$
where the $x'_i$ are pairwise distinct points of $\PP^1_k/\mu_{n/d}$. Moreover, denoting $\Sigma_{x}$ the set of $B$-stable prime divisors in the support of $f_1^*(E^x)$, the morphism $\pi_1$ defines an equivariant map of transitive $\mu_d$-sets
\begin{center}
$\Sigma_{x}\rightarrow \varphi^{-1}(x)$.
\end{center}
Using this and the fact that $f_1$ is étale, we obtain that
\begin{center}
$f_1^*(E^x)=\sum_{k=1}^{\mid\varphi^{-1}(x)\mid}E^{x'_{k,1}}+...+E^{x'_{k,l}}$,
\end{center}
where the $E^{x'_k,i}$ are pairwise distinct $B$-stable prime divisors in $X'$ satisfying respectively $\pi_1(E^{x'_k,i})=x'_k$, and $l$ is the cardinality of the orbit $\stab_{\mu_d}(x'_1).E^{x'_k,1}$. In fact, we have $l=1$ because for a given $k$, each $E^{x'_k,i}$ $i=1,...,l$ defines the same $B$-stable valuation (see Section \ref{SecCombinatorialMaterial} and notice that the map $\varrho:\Dd^B\rightarrow\breve{\Ee}$ is injective for almost homogeneous $G$-threefolds). Now, the statement follows from the observation that $q=f_1g_1$ and that $g_1$ is a torsor under a torus.
\end{proof}

\begin{lem}\label{Lem_Iteration_Cyclic}
Suppose that $X$ is a normal $G/\mu_n$-embedding. Then, $\clg(\hat{X})$ is free of rank $(\tilde{d}-1)N'$, where $d$ is the order of the torsion subgroup of $\clg(X)$, and $\tilde{d}$ is defined as in Proposition \ref{Prop_Ramification}.
\end{lem}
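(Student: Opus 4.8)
The plan is to identify $\clg(\hat X)$ with the cokernel of the pullback $f_1^*\colon\clg(X)\to\clg(X')$ along the torsion cover $f_1\colon X'\to X$ from the factorization $\hat X\xrightarrow{g_1}X'\xrightarrow{f_1}X$ of Construction \ref{ConsIteration}, and then to evaluate that cokernel by a snake-lemma argument applied to the localization sequences (\ref{Eq_ExactSequence1}) of $X$ and of the $G/\mu_{n/d}$-embedding $X'$.

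First I would record the exact sequence $\clg(X)\xrightarrow{f_1^*}\clg(X')\to\clg(\hat X)\to 0$. Since $g_1$ is a torsor under the torus $\TT_1=\Gamma_M$ with $M=\clg(X)/\clg(X)_{\rm tor}$, the pullback $g_1^*\colon\clg(X')\to\clg(\hat X)$ is surjective with kernel the image of the classifying homomorphism $M\to\clg(X')$; as $f_1^*$ annihilates $\clg(X)_{\rm tor}$ and induces precisely this classifying map on $M$, its image equals $\ker(g_1^*)$, whence the displayed sequence and the identification $\clg(\hat X)\cong\mathrm{coker}(f_1^*)$. I expect this to be the main obstacle: justifying rigorously that the torus torsor $g_1$ yields this four-term exactness, so that $\clg(\hat X)$ is generated by pullbacks from $X'$ with no spurious extra classes.

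Next I would invoke naturality of the localization sequences (\ref{Eq_ExactSequence1}) with respect to $f_1^*$, giving a commutative ladder
\begin{center}
\begin{tikzcd}
0 \arrow[r] & \ZZ^{N+N'} \arrow[r] \arrow[d, "\beta"] & \clg(X) \arrow[r] \arrow[d, "f_1^*"] & \widehat{\mu_n} \arrow[r] \arrow[d, "\gamma"] & 0 \\
0 \arrow[r] & \ZZ^{N+\tilde d N'} \arrow[r] & \clg(X') \arrow[r] & \widehat{\mu_{n/d}} \arrow[r] & 0,
\end{tikzcd}
\end{center}
in which $\gamma\colon\ZZ/n\twoheadrightarrow\ZZ/(n/d)$ is restriction of characters along $\mu_{n/d}\subset\mu_n$, hence surjective with $\ker\gamma\cong\ZZ/d$. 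The left map $\beta$ is read off from Proposition \ref{Prop_Ramification}, whose splitting already occurs at the level of $f_1$ (the torus torsor $g_1$ inducing no further splitting, by injectivity of $\varrho$): the $G$-stable divisors over $x_0,x_\infty$ have a single reduced preimage, while each of the $N'$ stable divisors over a remaining exceptional point $x'_i$ pulls back to the sum of its $\tilde d$ distinct preimages (a divisor $X^\infty$ dominating $\PP^1_k$, if present, behaves like the $x_0,x_\infty$ block and contributes nothing below). Thus $\beta$ is injective, equal to the identity on the $x_0,x_\infty$ block and to the diagonal $e_{ij}\mapsto\sum_{k=1}^{\tilde d}e_{ijk}$ on the remaining block, so that $\mathrm{coker}(\beta)\cong\bigl(\ZZ^{\tilde d}/\ZZ(1,\dots,1)\bigr)^{N'}\cong\ZZ^{(\tilde d-1)N'}$ is free.

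Finally I would run the snake lemma on this ladder. One has $\ker(f_1^*)=\clg(X)_{\rm tor}$, the classes trivialized by the étale cover $f_1$, and the induced map $\clg(X)_{\rm tor}\to\ker\gamma$ is an injection of groups both of order $d$ by (\ref{Eq_ExactSequence1}), hence an isomorphism; consequently the connecting homomorphism $\ker\gamma\to\mathrm{coker}(\beta)$ vanishes. Since $\mathrm{coker}(\gamma)=0$ and $\ker\beta=0$, the snake sequence collapses to an isomorphism $\mathrm{coker}(\beta)\xrightarrow{\sim}\mathrm{coker}(f_1^*)$. Combining with the first step yields $\clg(\hat X)\cong\ZZ^{(\tilde d-1)N'}$, free of rank $(\tilde d-1)N'$, as claimed. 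The remaining verifications — naturality of the sequences and the bookkeeping for $\beta$ — are routine once the exact sequence of the first step is in place.
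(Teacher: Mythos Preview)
Your argument is correct and takes a genuinely different route from the paper's.

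The paper works directly on $\hat{X}$: it first shows that the open $G\times\Gamma_{\clg(X)}$-orbit $\hat{X}_0\simeq G\times^{\mu_n}\Gamma_{\clg(X)}$ has trivial Picard group, and then applies the localization exact sequence \cite[2.2.4]{avezier_EqCoxRings} to $\hat{X}$ itself. This presents $\clg(\hat{X})$ as the quotient of the free group on the $\tilde{d}N'+N$ boundary prime divisors of $\hat{X}$ (counted via Proposition~\ref{Prop_Ramification}) by the subgroup of principal divisors of the $N+N'$ canonical sections $r'_{ij},r_{0j},r_{\infty j},r_\infty$, viewed as $G$-invariant units on $\hat{X}_0$; these relations span a rank-$(N+N')$ direct factor, so the quotient is free of the announced rank.

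Your approach instead routes through the intermediate $G/\mu_{n/d}$-embedding $X'$: you identify $\clg(\hat{X})$ with $\mathrm{coker}(f_1^*)$ via the torus-torsor sequence for $g_1$, and then compute this cokernel by the snake lemma applied to the ladder of localization sequences for $X$ and $X'$. The exact sequence you flag as the main obstacle is precisely the one the paper extracts from \cite[2.5.2 and 2.2~(1)]{avezier_EqCoxRings} in the proof of Lemma~\ref{Lem_TorsionSubgroup_Embeds_in_Hat}, so it is available; the identification $\ker(f_1^*)=\clg(X)_{\rm tor}$ follows by the same citations applied to the finite torsor $f_1$. Your method is cleaner diagrammatically and isolates the role of the torsion cover, while the paper's direct computation has the advantage of exhibiting explicit generators and relations for $\clg(\hat{X})$. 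One small point to tighten: a possible divisor $X^\infty$ dominating $\PP^1_k$ is not literally covered by Proposition~\ref{Prop_Ramification}; your claim that it stays irreducible under $f_1^*$ is correct (étaleness plus uniqueness of such a $G$-stable divisor in each embedding), but deserves the one-line justification you gestured at.
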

\begin{proof}
By \cite[2.8.6]{avezier_EqCoxRings}, the open $G\times\Gamma_{\clg(X)}$-orbit $\hat{X}_0$ in $\hat{X}$ is isomorphic to $G\times^{\mu_n}\Gamma_{\clg(X)}$, where $\mu_n$ is identified with a subgroup of $\Gamma_{\clg(X)}$ and acts on the latter by translation. Using \cite[2.5.2]{avezier_EqCoxRings} and that $G$ is semisimple and simply connected, we obtain isomorphisms
\begin{center}
 $\pic(\hat{X}_0)\simeq \pic^G(G\times^{\mu_n}\Gamma_{\clg(X)})\simeq \pic(\Gamma_{\clg(X)}/\mu_n) \simeq \pic(\GG^{N+N'}_m)=0$.
\end{center}
We deduce that $\clg(\hat{X})$ is generated by the classes of the prime divisors lying in $\hat{X}\setminus \hat{X}_0$. Consider the free $\ZZ$-module $K$ on these prime divisors. Using the notation introduced at the beginning of Section \ref{Sec_MainCoxSL2}, we claim that the relations in $K$ defining $\clg(\hat{X})$ are given by
\begin{itemize}
\item $\divi(r'_{ij})=q^*(X^{x_i}_j)=0$, $\forall i,j$,
\item $\divi(r_{0j})=q^*(X^{x_0}_j)=0$, $\forall j$,
\item $\divi(r_{\infty j})=q^*(X^{x_\infty}_j)=0$, $\forall j$,
\item $\divi(r_\infty)=q^*(X^{\infty})$, if $v_{X^{\infty}}\in \Vv(X)$.
\end{itemize}
Indeed, on one hand we have the exact sequence \cite[2.2.4]{avezier_EqCoxRings}
\begin{center}
$0\rightarrow \Oo(\hat{X})^{*G}\rightarrow \Oo(\hat{X}_0)^{*G}\xrightarrow{\divi} \bigoplus_{(D)_i} \ZZ D_i \xrightarrow{D\mapsto [\Oo_X(D)]}\clg^G(\hat{X})\simeq\clg(\hat{X})\rightarrow 0$,
\end{center}
where $(D_i)$ is the family of prime divisors lying in $\hat{X}\setminus \hat{X}_0$. In view of Proposition \ref{Prop_Ramification}, the cardinality of this family is $\tilde{d}N'+N$. On the other hand, using \cite[2.8.5]{avezier_EqCoxRings}, the $G$-invariant units in $\Oo(\hat{X})$ are constant, and the $G$-invariant units in $\Oo(\hat{X}_0)$ are Laurent monomials in the canonical sections associated to the $G$-stable prime divisors in $X$ (namely, the $r'_{ij}, r_{0j}, r_{\infty j}, r_\infty$). By Proposition \ref{Prop_Ramification} again, we see that the submodule of $K$ generated by the above relations is a direct factor of rank $N+N'$. It follows that $\clg(\hat{X})$ is free of rank $\tilde{d}N'+N-(N+N')=(\tilde{d}-1)N'$.
\end{proof}

\begin{rem}
Proposition \ref{Prop_ClassGroup_GenRelations} provides an effective method for the description of the class group of an almost homogeneous $G$-threefold $X$. Hence, the order $d$ of the torsion subgroup of $\clg(X)$ can be computed in practice. Also, for the computation of $\clg(\tilde{X})$ in the general case, recall that we have $\clg(\tilde{X})\simeq\clg(\tilde{Y})$ because $\tilde{X}\rightarrow\tilde{Y}$ is an almost principal $U$-bundle. Now it suffices to use Wrobel's computations of class groups of affine trinomial varieties in term of arithmetic data from the exponent vectors (\cite{Wrobel2}).
\end{rem}

\begin{lem}\label{Lem_TorsionSubgroup_Embeds_in_Hat}
With the notation of Construction \ref{ConsIteration}, there is a natural identification of $\clg(X')_{\rm tor}$ with a subgroup of $\clg(\hat{X})_{\rm tor}$.
\end{lem}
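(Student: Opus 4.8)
The plan is to exploit the factorization $q=f_1g_1$ from Construction \ref{ConsIteration}, and in particular the fact (established in the proof of \ref{Prop_Ramification}) that $g_1\colon\hat{X}\rightarrow X'$ is a torsor under the torus $\TT_1=\Gamma_M$, where $M=\clg(X)/\clg(X)_{\rm tor}$ is free. Since $g_1$ is faithfully flat with normal fibers, pullback of divisorial sheaves yields a well-defined group homomorphism $g_1^*\colon\clg(X')\rightarrow\clg(\hat{X})$. As any homomorphism carries torsion to torsion, it restricts to $g_1^*\colon\clg(X')_{\rm tor}\rightarrow\clg(\hat{X})_{\rm tor}$. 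This restriction is the natural map of the statement, so the entire content of the lemma is its injectivity.

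To prove injectivity I would start from a Weil divisor $D$ on $X'$ whose class is torsion, say $nD=\divi(h)$ with $h\in k(X')^*$, and assume $g_1^*[D]=0$, i.e. $g_1^*D=\divi(f)$ for some $f\in k(\hat{X})^*$. Because $g_1^*D$ is $\TT_1$-stable, for every $t\in\TT_1$ the divisor of $(t\cdot f)/f$ vanishes, so $(t\cdot f)/f$ is an invertible regular function on $\hat{X}$. The crux of the argument is that $\Oo(\hat{X})^*\simeq k^*$: indeed $\hat{X}$ sits inside the total coordinate space $\tilde{X}$ with complement of codimension $\geq 2$, and $\cox(X)$ has only constant units. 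Hence $(t\cdot f)/f\in k^*$, which exhibits $f$ as a $\TT_1$-semi-invariant of a well-defined weight $\chi\in X^*(\TT_1)=M$.

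It then remains to compare weights. Pulling back $nD=\divi(h)$ along $g_1$ gives $\divi(f^n)=\divi(g_1^*h)$, whence $f^n=c\,g_1^*h$ for some $c\in k^*$. Since $h\in k(X')=k(\hat{X})^{\TT_1}$, the function $g_1^*h$ is $\TT_1$-invariant, so comparing $\TT_1$-weights forces $n\chi=0$ in $M$; as $M$ is free this yields $\chi=0$. Thus $f$ is $\TT_1$-invariant and, $g_1$ being a $\TT_1$-torsor, descends to some $f_0\in k(X')^*$ with $g_1^*f_0=f$. Then $g_1^*D=\divi(f)=g_1^*\divi(f_0)$, and injectivity of pullback of Weil divisors along the dominant morphism $g_1$ gives $D=\divi(f_0)$, that is $[D]=0$.

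The routine verifications are that $g_1^*$ is a genuine homomorphism on class groups and that $k(\hat{X})^{\TT_1}=k(X')$ with descent of invariant functions, both standard for torus torsors. The single genuine input is the triviality $\Oo(\hat{X})^*\simeq k^*$, which is what rigidifies the weight $\chi$ and makes the comparison go through; I expect this to be the only step requiring care, and it follows from the codimension-$\geq 2$ inclusion $\hat{X}\hookrightarrow\tilde{X}$ together with the absence of non-constant units in $\cox(X)$.
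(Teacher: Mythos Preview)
Your argument is correct. Both your proof and the paper's rest on the same underlying fact: the kernel of the natural map $\clg(X')\rightarrow\clg(\hat{X})$ is the character lattice $\hat{\TT}_1=M$, which is free, so torsion passes through injectively. The difference is in packaging. The paper invokes the equivariant class group: citing \cite[2.5.2]{avezier_EqCoxRings} for the isomorphism $\clg^{\TT_1}(\hat{X})\simeq\clg(X')$ and \cite[2.2 (1)]{avezier_EqCoxRings} for the exact sequence
\[
0\rightarrow \hat{\TT}_1\rightarrow \clg^{\TT_1}(\hat{X})\rightarrow\clg(\hat{X})\rightarrow 0,
\]
the conclusion is immediate in one line. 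You instead unpack this by hand: you identify the obstruction to descending a trivialization of $g_1^*D$ as a weight $\chi\in M$, and kill it using the torsion hypothesis together with $\Oo(\hat{X})^*\simeq k^*$. Your route is more self-contained and makes explicit why the torus (as opposed to an arbitrary diagonalizable group) matters, at the cost of reproving a special case of the standard exact sequence for equivariant class groups. The paper's route is shorter and situates the lemma within the general framework already set up, but relies on that external machinery.
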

\begin{proof}
Using \cite[2.5.2]{avezier_EqCoxRings} and \cite[2.2 (1)]{avezier_EqCoxRings}, we obtain an exact sequence 
\begin{center}
$0\rightarrow \hat{\TT}_1\rightarrow \clg^{\TT_1}(\hat{X})\simeq\clg(X')\rightarrow\clg(\hat{X})\rightarrow 0$,
\end{center}
from which we deduce that the torsion subgroup of $\clg(X')$ embeds in that of $\clg(\hat{X})$.
\end{proof}

\begin{lem}\label{Lem_OrderPicX_Tor_AtMostTwo}
With the notation of Construction \ref{ConsIteration}, suppose that $X$ is a normal $G/F_{\DD_n}$-embedding. Then, $\clg(X')_{\rm tor}$ is identified with a subgroup of $\ZZ/n\ZZ$.
\end{lem}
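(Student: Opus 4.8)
The plan is to transfer the problem, via Lemma~\ref{Lem_TorsionSubgroup_Embeds_in_Hat}, to a computation on the characteristic space $\hat{X}$ of $X$, and there to imitate the argument of Lemma~\ref{Lem_Iteration_Cyclic}, the only new input being the non-abelian structure of $F=F_{\DD_n}$. First I would record the relevant group theory. Writing $F=\langle a,b\mid a^{2n}=1,\ b^2=a^n,\ bab^{-1}=a^{-1}\rangle$, one has $[a,b]=a^{-2}$, whence the derived subgroup is $D(F)=\langle a^2\rangle\simeq\mu_n$, cyclic of order $n$, and $F/D(F)$ is $\ZZ/4\ZZ$ or $(\ZZ/2\ZZ)^2$ according to the parity of $n$. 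The decisive point is that the conjugation action of $F/D(F)$ on $D(F)=\mu_n$ is by inversion (since $ba^2b^{-1}=a^{-2}$), so that the induced action on $\widehat{D(F)}=\hat{\mu}_n\simeq\ZZ/n\ZZ$ is through $\{\pm1\}$. If $\clg(X)_{\rm tor}=0$ then $X'=X$ and the assertion is empty, so I may assume otherwise.

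By Lemma~\ref{Lem_TorsionSubgroup_Embeds_in_Hat} the group $\clg(X')_{\rm tor}$ embeds in $\clg(\hat{X})_{\rm tor}$, so it suffices to prove that the latter is a subgroup of $\ZZ/n\ZZ$. I would compute it as in Lemma~\ref{Lem_Iteration_Cyclic}. By \cite[2.8.6]{avezier_EqCoxRings} the open $G\times\Gamma_{\clg(X)}$-orbit is $\hat{X}_0\simeq G\times^{F}\Gamma$, where $\Gamma:=\Gamma_{\clg(X)}$ and $F$ acts by translation through the homomorphism $F\to\Gamma$ dual to the surjection $\clg(X)\twoheadrightarrow\hat{F}$ of the localization sequence~(\ref{Eq_ExactSequence1}). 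Here the kernel of $F\to\Gamma$ is exactly $D(F)=\mu_n$, so the action factors through $\hat{F}=F/D(F)$, which acts \emph{freely} with quotient $\Gamma/\hat{F}=\Gamma_{\ker(\clg(X)\to\hat F)}=\GG_m^{N+N'}$. Consequently $\hat{X}_0\simeq(G/\mu_n)\times^{\hat{F}}\Gamma$ is, as a $G$-variety, the fibre bundle over $\GG_m^{N+N'}$ with fibre $G/\mu_n$ associated with the $\hat{F}$-torsor $\Gamma\to\GG_m^{N+N'}$, the structure group $\hat{F}$ acting on $G/\mu_n$ by right translation.

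Since the base $\GG_m^{N+N'}$ has trivial Picard group and only monomial units, the projection identifies the torsion part of $\pic(\hat{X}_0)\simeq\pic^{G}(G\times^{F}\Gamma)$ with the subgroup of $\pic^{G}(G/\mu_n)$ invariant under the monodromy action of the structure group $\hat{F}$. Now $\pic^{G}(G/\mu_n)=\hat{\mu}_n\simeq\ZZ/n\ZZ$ by \cite[4.5.1.2]{coxrings}, and the right-translation action of $\hat{F}$ on $\pic^{G}(G/\mu_n)$ is precisely the $\{\pm1\}$-action identified in the first paragraph. Hence $\pic(\hat{X}_0)_{\rm tor}=(\ZZ/n\ZZ)^{\{\pm1\}}=(\ZZ/n\ZZ)[2]$, the $2$-torsion of $\ZZ/n\ZZ$, which is a subgroup of $\ZZ/n\ZZ$ (of order $\gcd(2,n)$). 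Finally, exactly as in Lemma~\ref{Lem_Iteration_Cyclic}, the relations coming from the $G$-stable boundary divisors span a torsion-free direct factor of the free module on the boundary divisor classes; the localization sequence \cite[2.2.4]{avezier_EqCoxRings} therefore yields an injection $\clg(\hat{X})_{\rm tor}\hookrightarrow\pic(\hat{X}_0)_{\rm tor}\subseteq\ZZ/n\ZZ$, and composing with Lemma~\ref{Lem_TorsionSubgroup_Embeds_in_Hat} gives the claim.

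The main obstacle is the Picard-group computation in the third paragraph: one must justify the fibre-bundle description of $\hat{X}_0$ over $\GG_m^{N+N'}$, argue that the base contributes only to the free part of $\pic(\hat{X}_0)$ so that the torsion is the $\hat{F}$-invariant part of $\pic^{G}(G/\mu_n)$, and then verify—just as in the cyclic case treated in Lemma~\ref{Lem_Iteration_Cyclic}—that the boundary relations form a torsion-free direct factor, so that all of the torsion of $\clg(\hat{X})$ is already detected on the open orbit. Everything else is the elementary group theory of $F_{\DD_n}$ recorded at the outset, together with the inversion action that cuts $\ZZ/n\ZZ$ down to its $2$-torsion.
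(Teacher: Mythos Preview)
Your strategy coincides with the paper's: reduce to $\clg(\hat X)_{\rm tor}$ via Lemma~\ref{Lem_TorsionSubgroup_Embeds_in_Hat}, identify the open orbit $\hat X_0\simeq(G/\mu_n)\times^{\hat F}\Gamma$ (using that $D(F_{\DD_n})=\mu_n$ is the kernel of $F\to\Gamma$), bound $\pic(\hat X_0)$, and then use the localization sequence together with the observation that the boundary relations span a saturated sublattice. The group theory and the identification of $\hat X_0$ are fine.

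The gap is your ``monodromy'' computation of $\pic(\hat X_0)$. You assert that restriction to a fibre identifies $\pic(\hat X_0)_{\rm tor}$ with $(\ZZ/n\ZZ)^{\hat F}$, but this needs two separate facts: that the restriction map is \emph{injective} (this is the substance of the lemma and is not automatic), and that every $\hat F$-invariant class of $\pic^G(G/\mu_n)$ extends (this involves an $H^2$ obstruction and is stronger than required). The paper bypasses both issues. Writing $\pic(\hat X_0)\simeq\pic^{\hat F}\!\big((G/\mu_n)\times\Gamma\big)$, it considers the forgetful map $\phi$ to $\pic\!\big((G/\mu_n)\times\Gamma\big)\simeq\ZZ/n\ZZ$ and shows $\ker\phi=H^1_{\rm alg}\big(\hat F,\Oo((G/\mu_n)\times\Gamma)^*\big)$. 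Because $\Oo(G/\mu_n)^*=k^*$, the unit group reduces to $\Oo(\Gamma)^*$, so $\ker\phi$ embeds in $\pic^{\hat F}(\Gamma)\simeq\pic(\Gamma/\hat F)=\pic(\GG_m^{N+N'})=0$. This yields $\pic(\hat X_0)\hookrightarrow\ZZ/n\ZZ$ directly, without any monodromy analysis or claim of equality with the invariants.

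For the localization step, your reference to Lemma~\ref{Lem_Iteration_Cyclic} is slightly misleading: there the pullback of each $G$-stable divisor under $q$ is a \emph{single} prime divisor, whereas here $q$ factors as a torus-torsor followed by a finite-group torsor, so $q^*(D)$ is a sum of pairwise distinct prime divisors with disjoint supports across different $D$. This still makes the image of $\divi$ a direct summand, but the justification is the one the paper gives, not a literal repetition of the cyclic argument.
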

\begin{proof}
As $X'\rightarrow X$ is a quotient presentation, we can suppose that $X,X'$ are smooth, and consider Picard groups instead of class groups. By \cite[2.8.6]{avezier_EqCoxRings}, the open $G\times\Gamma_{\pic(X)}$-orbit $\hat{X}_0$ in $\hat{X}$ can be identified with $G/\mu_n\times^{\mu_2\times\mu_2}\Gamma_{\pic(X)}$, where $\mu_2\times\mu_2$ is identified with a subgroup of $\Gamma_{\pic(X)}$ and acts on the latter by translation. By \cite[2.5.2]{avezier_EqCoxRings}, we have $\pic(\hat{X}_0)\simeq\pic^{\mu_2\times\mu_2}(G/\mu_n\times\Gamma_{\pic(X)})$, and we show that the forgetful morphism
\begin{center}
$\phi:\pic^{\mu_2\times\mu_2}(G/\mu_n\times\Gamma_{\pic(X)})\rightarrow \pic(G/\mu_n\times\Gamma_{\pic(X)})\simeq\ZZ/n\ZZ$
\end{center} 
has a trivial kernel. By \cite[Lemma 2.2]{KKV}, this kernel is identified with the group of classes of algebraic cocycles $H^1_{alg}(\mu_2\times\mu_2,\Oo(G/\mu_n\times\Gamma_{\pic(X)})^*)$. Now, it follows from \cite[4.1.3]{LinearizationGBrion} that $\Oo(G/\mu_n\times\Gamma_{\pic(X)})^*\simeq\Oo(\Gamma_{\pic(X)})^*$. This last fact allows to view $\ker\phi$ as a subgroup of
\begin{center}
$\pic^{\mu_2\times\mu_2}(\Gamma_{\pic(X)})\simeq\pic(\Gamma_{\pic(X)}/\mu_2\times\mu_2)=0$.
\end{center}
As a consequence, $\ker\phi$ is trivial and we obtain an injective morphism $\pic(\hat{X}_0)\rightarrow \ZZ/n\ZZ$. 

On the other hand, the localization exact sequence \cite[2.2.4]{avezier_EqCoxRings} reads in our situation
\begin{center}
$0\rightarrow \Oo(\hat{X})^{*G}\rightarrow \Oo(\hat{X}_0)^{*G}\xrightarrow{\divi} \bigoplus_{(D)_i} \ZZ D_i \xrightarrow{D\mapsto [\Oo_X(D)]}\pic(\hat{X})\rightarrow \pic(\hat{X}_0) \rightarrow 0$,
\end{center}
where $(D_i)_i$ denotes the family of prime divisors lying in the complement of $\hat{X}_0$ in $\hat{X}$. By \cite[2.8.5]{avezier_EqCoxRings}, the $G$-invariant units in $\Oo(\hat{X})$ are constant, and the $G$-invariant units in $\Oo(\hat{X}_0)$ are Laurent monomials in the canonical sections associated to the $G$-stable prime divisors in $X$. Because $q$ is the composition of torsor by a torus followed by a torsor by a finite diagonalizable group, the divisor of such a canonical section $r$ (seen as a regular function on $\hat{X}$) is a sum of pairwise distinct elements of $(D_i)_i$. Moreover, the intersection of the supports of any two such divisors is empty. It follows that the cokernel of the morphism $\Oo(\hat{X}_0)^{*G}\xrightarrow{\divi} \bigoplus_{(D)_i} \ZZ D_i$ is a free abelian group. This implies that $\pic(\hat{X})_{\rm tor}$ embeds as a subgroup of $\pic(\hat{X}_0)$, thus of $\ZZ/n\ZZ$. By Lemma \ref{Lem_TorsionSubgroup_Embeds_in_Hat}, the same holds for $\pic(X')_{\rm tor}$.
\end{proof}

\begin{lem}\label{Lem_Contradiction_QuotienPresentation}
With the notation of Construction \ref{ConsIteration}, suppose that $\clg(X)_{\rm tor}$ is cyclic. Then $\clg(X')_{\rm tor}$ is not cyclic of even order.
\end{lem}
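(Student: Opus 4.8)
The plan is to argue by contradiction, after translating the statement into the language of the normal series $F=F_0\supseteq F_1\supseteq F_2$ produced by Construction \ref{ConsIteration}. Recall that $\clg(X)_{\rm tor}\simeq X^*(F/F_1)$ and $\clg(X')_{\rm tor}\simeq X^*(F_1/F_2)$, so the hypothesis reads ``$F/F_1$ is cyclic'' and I assume for contradiction that $F_1/F_2$ is cyclic of even order. The basic structural input I would exploit is that $-I$ is the unique element of order two in any finite subgroup of $\SL_2$; consequently the abelianization $H^{ab}$ of such a subgroup is cyclic unless $H$ is a binary dihedral group $F_{\DD_m}$ with $m$ even, in which case $H^{ab}\simeq\ZZ/2\times\ZZ/2$. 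Note already that for $F\in\{\mu_n,F_\TT,F_\OO,F_\II\}$ the group $F^{ab}$ is cyclic, so the cyclic hypothesis on $\clg(X)_{\rm tor}$ is automatic and the content of the lemma is concentrated in the binary dihedral case.

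First I would reduce to controlling the $2$-torsion of $\clg(\hat X)$. By Lemma \ref{Lem_TorsionSubgroup_Embeds_in_Hat} there is an embedding $\clg(X')_{\rm tor}\hookrightarrow\clg(\hat X)_{\rm tor}$, so it suffices to show that this last group has odd order. To compute it I would run the open-orbit-and-localization argument used in the proofs of Lemmas \ref{Lem_Iteration_Cyclic} and \ref{Lem_OrderPicX_Tor_AtMostTwo}: the open $G\times\Gamma_{\clg(X)}$-orbit of $\hat X$ is $G\times^{F}\Gamma_{\clg(X)}$, the localization sequence identifies $\clg(\hat X)_{\rm tor}$ with a subgroup of $\pic(\hat X_0)$ (the relevant cokernel of $\divi$ being free, exactly as in the two lemmas), and the forgetful map realizes $\pic(\hat X_0)$ inside $\pic(G/D(F))\simeq\widehat{D(F)}$. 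This gives the uniform bound $\clg(X')_{\rm tor}\hookrightarrow\widehat{D(F)}\simeq D(F)^{ab}$, which specializes to Lemma \ref{Lem_Iteration_Cyclic} when $F=\mu_n$ (then $D(F)=1$) and to Lemma \ref{Lem_OrderPicX_Tor_AtMostTwo} when $F=F_{\DD_n}$ (then $D(F)=\mu_n$).

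Next I would dispose of all cases except the binary dihedral one. For $F=\mu_n$ and $F=F_\II$ (perfect) one has $D(F)^{ab}$ trivial, hence $\clg(X')_{\rm tor}=0$. For $F=F_\TT$ and $F=F_\OO$ one has $D(F)^{ab}\simeq\ZZ/2\times\ZZ/2$, respectively $\ZZ/3$, but the residual conjugation action of $F^{ab}\simeq\ZZ/3$ (resp. $\ZZ/2$) on $\widehat{D(F)}$ has no nonzero invariants of even order: it cyclically permutes the three involutions of $\ZZ/2\times\ZZ/2$, respectively inverts $\ZZ/3$. Since only classes invariant under this action can be $F^{ab}$-linearized, the part surviving in $\pic(\hat X_0)$ has odd order, and $\clg(X')_{\rm tor}$ has odd order too, contradicting the assumption.

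The step I expect to be the main obstacle is $F=F_{\DD_n}$. Here $\widehat{D(F)}\simeq\widehat{\mu_n}\simeq\ZZ/n$ acquires an element of order two exactly when $n$ is even, namely the character $\psi$ detected on the central involution $-I=a^{n}$, with $\psi(a^2)=-1$. The key local computation is that $\psi$ does \emph{not} extend to a character of $F_{\DD_n}$: an extension $\chi$ would have to satisfy both $\chi(a)^2=\psi(a^2)=-1$ and, since $bab^{-1}=a^{-1}$ forces $\chi(a)=\chi(a)^{-1}$, also $\chi(a)^2=1$, which is impossible. Thus the order-two class is not $F^{ab}$-linearizable and should be excluded from $\pic(\hat X_0)$, leaving an odd-order group. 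The delicate point the proof must settle is that the associated linearization obstruction is genuinely controlled here: it is governed by the action of $F^{ab}$ on $\Gamma_{\clg(X)}$, and the freeness used in Lemma \ref{Lem_OrderPicX_Tor_AtMostTwo} to kill $\ker\phi$ requires $F^{ab}\hookrightarrow\Gamma_{\clg(X)}$ to be injective, i.e. $\clg(X)_{\rm tor}$ to be the \emph{full} group $\widehat{F^{ab}}\simeq\ZZ/2\times\ZZ/2$. The cyclic hypothesis is precisely what excludes this full case; combined with the list of index-two subgroups of $F_{\DD_n}$ (the cyclic $\mu_{2n}$ and the two binary dihedral $F_{\DD_{n/2}}$) and the non-extension of $\psi$ above, it should force $\clg(X')_{\rm tor}$ to have odd order. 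Reconciling the linearization obstruction with the possible shapes of $F_1$, and thereby showing the even $2$-torsion can only appear when $\clg(X)_{\rm tor}$ is non-cyclic, is the crux of the argument.
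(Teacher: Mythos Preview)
Your route through $\clg(\hat X)_{\rm tor}$ and a case analysis on $F$ is genuinely different from the paper's argument, and it is not complete. You yourself flag the $F_{\DD_n}$ case as ``the crux'' and leave it unresolved: you observe that the unique $2$-torsion character of $D(F_{\DD_n})=\mu_n$ does not extend to $F_{\DD_n}$, but you do not determine whether it is $F^{ab}$-linearizable on $G/\mu_n\times\Gamma_{\clg(X)}$, which is what actually controls membership in $\pic(\hat X_0)$. There is also a misconception in your analysis of $\ker\phi$: the embedding $F^{ab}\hookrightarrow\Gamma_{\clg(X)}$ is always injective, since it is dual to the surjection $\clg(X)\twoheadrightarrow\hat F$ from the localization sequence; it does \emph{not} require $\clg(X)_{\rm tor}$ to equal the full $\widehat{F^{ab}}$. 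So the mechanism by which the cyclic hypothesis on $\clg(X)_{\rm tor}$ enters your argument is not the one you identify, and your sketch does not exhibit where it is actually used.

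The paper's proof is much shorter and entirely case-free; it never passes through $\hat X$ or $D(F)$. It works directly on $X'$ with the action of the cyclic group $\Gamma:=\Gamma_{\clg(X)_{\rm tor}}$. Assuming $\clg(X')_{\rm tor}$ cyclic of even order, its unique order-$2$ class $[\mathcal L]$ is fixed by every automorphism of $\clg(X')_{\rm tor}$, hence by pullback along a generator $g_1$ of $\Gamma$. One lifts $g_1$ to an automorphism $\tilde g_1$ of the total space $L$ commuting with the $\GG_m$-action on fibers; since $\Gamma$ is cyclic, the single relation $\tilde g_1^{|\Gamma|}=\mathrm{Id}$ can be arranged by rescaling (as $\tilde g_1^{|\Gamma|}$ is a nonzero scalar because $\mathcal O(X')^*\simeq k^*$). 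This produces a $\Gamma$-linearization of $\mathcal L$. But the exact sequence $1\to\hat\Gamma\to\pic^\Gamma(X')\xrightarrow{\phi}\pic(X')$, together with $\pic^\Gamma(X')_{\rm tor}\simeq\pic(X)_{\rm tor}\simeq\hat\Gamma$, forces $\mathrm{im}\,\phi$ to be torsion-free, contradicting that the torsion class $[\mathcal L]$ lies in it. The cyclicity hypothesis is used exactly once, at the linearization step: for a cyclic group there is a single relation to satisfy, and it can always be killed by a scalar.
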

\begin{proof}
As before we can suppose that $X,X'$ are smooth, and consider Picard groups instead of class groups. By contradiction, suppose that $\pic(X')_{\rm tor}$ is cyclic of even order. Then, there exists a unique subgroup of order two generated by a non-trivial element $[\Ll]\in\pic(X')_{\rm tor}$. Consider the natural action of $\Gamma:=\Gamma_{\pic(X)_{\rm tor}}\simeq\mu_n$ on $X'$. Let $L$ be the line bundle over $X'$ associated with $\Ll$, and let $g_1$ denote a generator of $\Gamma(k)$ viewed as an automorphism of the variety $X'$. As the pullback by $g_1$ induces an automorphism of $\pic(X')_{\rm tor}$, we have $g_1^*\Ll\simeq \Ll$. By \cite[3.4.1 (ii)]{LinearizationGBrion}, there exists an automorphism $\tilde{g}_1$ of $L$ viewed as a variety such that $\tilde{g}_1$ commutes with the $\GG_m$-action on fibers, and restricts on $X'$ to $g_1$. By \cite[3.4.1 (ii)]{LinearizationGBrion} again, $\tilde{g}^n_1$ is the multiplication in the fibers of $L$ by a non-zero scalar (recall that $\Oo(X')^*\simeq k^*$ because $X'$ is almost homogeneous under $G$). Hence, up to compose $\tilde{g}_1$ with an automorphism of $L$ viewed as a line bundle, we can suppose that $\tilde{g}_1^n=\Id_L$. By abuse, we also let $\Gamma$ denote the subgroup of the automorphism group of the variety $L$ spanned by $\tilde{g}_1$. This is a finite group acting on $L$ compatibly with the $\GG_m$-action on the fibers and the $\Gamma$-action on the base. By definition, we have built a $\Gamma$-linearization of $\Ll$, and we now prove that this yields a contradiction. Using \cite[2.3]{KKV} and the fact that $\Oo(X')^*\simeq k^*$, we obtain an exact sequence
\begin{center}
$1 \rightarrow\hat{\Gamma}\rightarrow\pic^{\Gamma}(X')\xrightarrow{\phi}\pic(X')$,
\end{center}
where the image of the morphism $\hat{\Gamma}\rightarrow\pic^{\Gamma}(X')$ is the subgroup of the (classes of) linearizations of $\Oo_{X'}$, and $\phi$ is defined by forgetting the linearization. On the other hand, we have $\pic^{\Gamma}(X')_{\rm tor}\simeq\pic(X)_{\rm tor}\simeq\hat{\Gamma}$ (\cite[2.5.2]{avezier_EqCoxRings}). It follows that the image of $\phi$ is a free abelian subgroup of $\pic(X')$. This is a contradiction as $[\Ll]$ is of order two in $\pic(X')$ and linearizable.
\end{proof}

\begin{prop}\label{Prop_BoundedIteration}
We have the following upper bounds for the length $m$ of the iteration of Cox rings:
\begin{itemize}
\item $m=0$ (i.e. $X$ is factorial) exactly when $X=G$ or $X=G/F_{\II}$,
\item $m\leq 1$ if $F$ is binary icosahedral or cyclic of order $\leq 2$,
\item $m\leq 2$ if $F$ is cyclic of order $\geq 3$,
\item $m\leq 3$ if $F$ is binary dihedral or binary tetrahedral,
\item $m\leq 4$ if $F$ is binary octahedral.
\end{itemize}
Moreover, $\hat{X}^{(m)}$ is the characteristic space of a normal almost homogeneous $G$-threefold $X^{'(m)}$ with torsion-free class group.
\end{prop}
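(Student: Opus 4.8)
The plan is to read off the length $m$ from the chain of finite subgroups $F=F_0\supseteq F_1\supseteq\cdots$ produced by Construction \ref{ConsIteration}: each $X^{'(i)}$ is a normal $G/F_i$-embedding with $\clg(X^{'(i)})_{\rm tor}\simeq X^*(F_i/F_{i+1})$, and $\hat X^{(i+1)}$ is the characteristic space of $X^{'(i)}$. The chain terminates at the first torsion-free stage $X^{'(m)}$, whose characteristic space $\hat X^{(m)}$ is then factorial; this is the \emph{moreover} assertion. Since every $F_i/F_{i+1}$ is abelian we have $F_{i+1}\supseteq D(F_i)$, so the chain is steered by the derived series of $F$, the relevant terms being $D(F_{\DD_n})=\mu_n$, $D(F_{\TT})=Q_8=F_{\DD_2}$, $D(F_{\OO})=F_{\TT}$ and $D(Q_8)=\mu_2$, with abelianizations $F_{\DD_n}/\mu_n$ of order $4$, $F_{\TT}/Q_8\simeq\ZZ/3$, $F_{\OO}/F_{\TT}\simeq\ZZ/2$ and $Q_8/\mu_2\simeq(\ZZ/2)^2$.

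The engine of the argument is a reduction to the cyclic case. If the chain reaches a cyclic group $F_j=\mu_\ell$, then $X^{'(j)}$ is a $G/\mu_\ell$-embedding and Lemma \ref{Lem_Iteration_Cyclic} shows that $\clg(\hat X^{(j+1)})$ is free; as the Cox ring of a variety with free class group is a unique factorization domain, $\hat X^{(j+2)}$ is factorial, giving $m\leq j+2$, sharpened to $m\leq j+1$ when $\ell\leq 2$ (there the rank vanishes, so $\hat X^{(j+1)}$ itself is factorial). Likewise, as soon as $\clg(X^{'(j)})$ is torsion-free one has $m\leq j+1$. It therefore suffices to bound the first index at which the chain becomes cyclic or torsion-free.

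The easy cases are immediate. If $F$ is trivial or $F_{\II}$ then $F$ is perfect, so $X^*(F)=0$ and the sequence \ref{Eq_ExactSequence1} makes $\clg(X)$ free (compare \ref{Prop_Desc_CLGroup_easyCase}); hence $m\leq 1$, with $m=0$ exactly when $\clg(X)=0$, i.e. $N+N'=0$, i.e. $X=G$ or $X=G/F_{\II}$. If $F=\mu_n$ the group is already cyclic ($j=0$), so $m\leq 2$, and $m\leq 1$ for $n\leq 2$. For $F=F_{\TT}$ the only nontrivial branch is $F_1=Q_8$, fed by the cyclic torsion $\clg(X)_{\rm tor}\simeq\ZZ/3$; then $\clg(X^{'(1)})_{\rm tor}\hookrightarrow X^*(Q_8/\mu_2)\simeq(\ZZ/2)^2$, and Lemma \ref{Lem_Contradiction_QuotienPresentation} rules out the intermediate value $\ZZ/2$, leaving $\{0,(\ZZ/2)^2\}$: the first gives $m\leq 2$, the second forces $F_2=\mu_2$ and hence $m\leq 3$. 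For $F=F_{\OO}$ the branch $F_1=F_{\TT}$ runs this one level higher, producing in the worst case $F_2=Q_8$ and then, again by Lemma \ref{Lem_Contradiction_QuotienPresentation}, $F_3=\mu_2$, so $m\leq 4$.

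The subtle case is $F=F_{\DD_n}$, where $F_1\supseteq\mu_n$ and, for $n$ even, the abelianization $(\ZZ/2)^2$ lets $F_1$ be again binary dihedral. When $F_1$ is cyclic ($\mu_n$ or the rotation subgroup $\mu_{2n}$) the reduction gives $m\leq 3$. When $F_1=F_{\DD_{n/2}}$ the incoming torsion $\clg(X)_{\rm tor}\simeq\ZZ/2$ is cyclic, so Lemma \ref{Lem_OrderPicX_Tor_AtMostTwo} makes $\clg(X^{'(1)})_{\rm tor}$ cyclic, Lemma \ref{Lem_Contradiction_QuotienPresentation} excludes even order, and \ref{Eq_ExactSequence1} confines it to the $2$-group $X^*(F_{\DD_{n/2}}/\mu_{n/2})$; these constraints force $\clg(X^{'(1)})_{\rm tor}=0$, whence $m\leq 2$. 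Thus $m\leq 3$ in all cases. The crux of the proof is exactly this quaternionic step: a priori a cyclic torsion of even order could slip into $(\ZZ/2)^2$ and spawn a spurious $\mu_4$-stage---which would only yield $m\leq 4$ for $F_{\TT}$ and $m\leq 5$ for $F_{\OO}$---and Lemma \ref{Lem_Contradiction_QuotienPresentation} is precisely the input that excludes this and secures the stated bounds.
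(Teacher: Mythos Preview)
Your proof is correct and follows essentially the same approach as the paper: you run Construction \ref{ConsIteration} along the derived series of $F$, invoke Lemma \ref{Lem_Iteration_Cyclic} once the chain reaches a cyclic subgroup, and use Lemmas \ref{Lem_OrderPicX_Tor_AtMostTwo} and \ref{Lem_Contradiction_QuotienPresentation} to force the torsion at the $Q_8=F_{\DD_2}$ and $F_{\DD_{n/2}}$ stages to be either trivial or $(\ZZ/2)^2$. Your handling of the $F_{\DD_{n/2}}$ branch (cyclic by \ref{Lem_OrderPicX_Tor_AtMostTwo}, odd by \ref{Lem_Contradiction_QuotienPresentation}, contained in a $2$-group by \eqref{Eq_ExactSequence1}, hence trivial) is a slightly slicker packaging of the paper's order-by-order case analysis, but the content is identical.
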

\begin{proof}
Except for the upper bounds on $m$, the statement is clear from Construction \ref{ConsIteration}. We have $m=0$ if and only if $X$ is factorial (i.e. $\clg(X)=0$). In view of the exact sequence (\ref{Eq_ExactSequence1}), this amount to $X=G/F$ with $\hat{F}=0$, whence $F$ is trivial or icosahedral. If $F$ is trivial or icosahedral, then $\clg(X)$ is torsion-free by \ref{Prop_Desc_CLGroup_easyCase}. This in turn implies that $\hat{X}$ is factorial by \cite[1.4.1.5]{coxrings}, whence $m\leq 1$ in this case. If $F=\mu_2$, then $m=1$ by virtue of Lemma \ref{Lem_Iteration_Cyclic}. Also, if $F=\mu_n$, $n\geq 3$, then $m\leq 2$ by Lemma \ref{Lem_Iteration_Cyclic} and \cite[1.4.1.5]{coxrings}.

Now suppose that $F=F_\OO$. Then, either $X$ has torsion-free class group and $m=1$, or we obtain a normal $G/F_\TT$-embedding $X'$. Indeed, there is no other possibility as $F_\TT$ is the derived subgroup of $F_\OO$ and $F_\OO/F_\TT\simeq\mu_2$ is a simple group. Then, either $X'$ has torsion-free class group and $m=2$, or we obtain a normal $G/F_{\DD_2}$-embedding $X^{'(2)}$. Indeed, there is no other possibility as $F_{\DD_2}$ is the derived subgroup of $F_\TT$ and $F_\TT/F_{\DD_2}\simeq\mu_3$ is a simple group. The derived subgroup of $F_{\DD_2}$ is $\mu_2$ and $F_{\DD_2}/\mu_2\simeq \mu_2\times\mu_2$. In view of Lemma \ref{Lem_Contradiction_QuotienPresentation}, we now have only two possibilities
\begin{enumerate}
\item $\clg(X^{'(2)})$ is torsion-free, whence $m=3$,
\item $\clg(X^{'(2)})\simeq \ZZ/2\ZZ\times \ZZ/2\ZZ$.
\end{enumerate}
In the second case, we obtain a normal $G/\mu_2$-embedding $X^{'(3)}$, and Lemma \ref{Lem_Iteration_Cyclic} gives that $\hat{X}^{(4)}$ is factorial, thus $m=4$ in this case.

In the case $F=F_\TT$, we have $m\leq 3$ by the last paragraph. It remains to treat the case $F=F_{\DD_n}$, $n\geq 2$. We have $D(F_{\DD_n})\simeq \mu_n$ and $F_{\DD_n}/\mu_n\simeq\mu_2\times\mu_2$ when $n$ is even, and $F_{\DD_n}/\mu_n\simeq\mu_4$ when $n$ is odd. In this last case, the proper subgroups of $F_{\DD_n}$ containing its derived subgroup are cyclic. Hence, either $\clg(X)$ is torsion-free and $m=1$, or we obtain a normal embedding $X'$ of $G$ modulo a cyclic group, whence $m\leq 3$. If $n$ is even, the subgroups of $F_{\DD_n}$ containing its derived subgroup are $F_{\DD_n}$, two copies of $F_{\DD_{n/2}}$, $\mu_{2n}$, and $\mu_n$. Hence, either $\clg(X)$ is torsion-free ($m=1$), or we obtain a normal embedding $X'$ of $G$ modulo a cyclic group ($m\leq 3$), or we obtain a normal $G/F_{\DD_{n/2}}$-embedding $X'$. In this last case, $\clg(X')_{\rm tor}$ must be trivial, thus $m=2$. Indeed, the order of $\clg(X')_{\rm tor}$ cannot be $2$ by Lemma \ref{Lem_Contradiction_QuotienPresentation}. Suppose by contradiction that this order is $4$. By Lemma \ref{Lem_Contradiction_QuotienPresentation} again, we must have $\clg(X')_{\rm tor}\simeq\ZZ/2\ZZ\times\ZZ/2\ZZ$, but this is impossible in view of Lemma \ref{Lem_OrderPicX_Tor_AtMostTwo}.
\end{proof}

\subsection{Presentation of $\cox(X)$ by generators and relations}
\label{SecPresentationCox}
In this section, we develop a general strategy for the description of $\cox(X)$ by generators and relations. In the sequel, we always suppose that $X$ doesn't admit a $G$-stable divisor dominating $\PP^1_k$, and admits at least two exceptional points. These assumptions are aimed to simplify the notation, the cases left aside can be treated with the same method. To begin with, an easy consequence of Theorem \ref{ThmUinvCoxG} is the

\begin{prop}\label{Prop_HighestWeightTheory}
The Cox ring of $X$ is generated as a $k$-algebra by the simple $G$-modules spanned by the canonical sections of the exceptional divisors.
\end{prop}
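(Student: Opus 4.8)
The plan is to read this off the presentation of $\cox(X)^U$ recalled at the beginning of Section \ref{Sec_MainCoxSL2}, using the fact that an $\SL_2$-algebra is reconstructed from its $U$-invariants. Concretely, I would first isolate the following reconstruction principle: if $R=\bigoplus_w R_w$ is a $\clg(X)$-graded rational $\SL_2$-algebra whose grading is preserved by $G$, and $R'\subseteq R$ is a $G$-stable $k$-subalgebra with $(R')^U=R^U$, then $R'=R$. This holds because for $\SL_2$ the multiplicity of the simple module $V_d$ in any rational $G$-module $M$ equals $\dim(M^U)_{d\omega}$, the dimension of the weight-$d\omega$ space of $U$-invariants. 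Applying this in each degree $w$ to the inclusion $R'_w\subseteq R_w$ of finite-dimensional $G$-modules having equal $U$-invariants forces all isotypic components to coincide, whence $R'_w=R_w$ for every $w$.

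Next I would invoke Theorem \ref{ThmUinvCoxG}, in the refined form stated at the start of this section, which presents $\cox(X)^U$ as generated by the $B$-semi-invariants $a,b,(s_i)_i,(s'_i)_i,(r_{ij})_{ij},(r'_{ij})_{ij}$. Each such generator is a highest weight vector, so the $G$-module it spans is a single simple module. Moreover $s_i,s'_i$ are the canonical sections of the exceptional colors $E^{x_i},E^{x'_i}$, while $r_{ij},r'_{ij}$ are the (necessarily $G$-invariant, as $\hat{G}$ is trivial) canonical sections of the $G$-stable exceptional divisors $X^{x_i}_j,X^{x'_i}_j$; thus all of these lie among the canonical sections of exceptional divisors. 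The only generators \emph{not} of this shape are $a$ and $b$. I would then let $M$ be the sum of the simple $G$-modules spanned by the canonical sections of the exceptional divisors, and let $R'$ be the $G$-stable subalgebra they generate, so that $s_i,s'_i,r_{ij},r'_{ij}\in R'$ by construction.

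It remains to show $a,b\in R'$: once this is known, $(R')^U$ contains every generator of $\cox(X)^U$, hence $(R')^U=\cox(X)^U$, and the reconstruction principle yields $R'=\cox(X)$, which is the claim. Here the standing hypothesis that $X$ has at least two exceptional points enters. Choosing two of them, with distinct homogeneous coordinates $[\alpha_1:\beta_1]\neq[\alpha_2:\beta_2]$, the corresponding relations of Theorem \ref{ThmUinvCoxG} take the form $\beta_p a-\alpha_p b=m_p$ for $p=1,2$, where each right-hand side $m_p$ is a monomial in the $s$'s and $r$'s, hence an element of $R'$. Since $[\alpha_1:\beta_1]\neq[\alpha_2:\beta_2]$, the coefficient vectors $(\beta_1,-\alpha_1)$ and $(\beta_2,-\alpha_2)$ are linearly independent, so this $2\times 2$ system can be inverted, expressing $a$ and $b$ as $k$-linear combinations of $m_1,m_2\in R'$; thus $a,b\in R'$.

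The argument is essentially formal once the reconstruction principle is recorded, and I do not anticipate a real obstacle. The only place the hypotheses are used is the inversion of the linear system, which is exactly why ``at least two exceptional points'' is needed; the edge cases excluded by the standing assumptions of the section (an exceptional divisor dominating $\PP^1_k$, or fewer than two exceptional points) would merely require carrying along the extra generator $s_\infty$, which appears in no relation, or a direct inspection, and are explicitly set aside.
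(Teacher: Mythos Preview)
Your argument is correct and is precisely the intended one: the paper offers no detailed proof beyond declaring the proposition ``an easy consequence of Theorem \ref{ThmUinvCoxG}'', and you have supplied exactly that easy consequence. The reconstruction principle you record (a $G$-stable subalgebra with the same $U$-invariants is everything) is the standard method-of-$U$-invariants step, and your elimination of $a$ and $b$ via any two of the listed relations is exactly how the standing hypothesis of at least two exceptional points is used; your accounting of which generators are canonical sections of exceptional divisors matches the paper's conventions.
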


This result provides us with homogeneous generators for $\cox(X)$, namely the elements of $k$-bases of these simple $G$-modules. The more difficult task is now to describe the ideal of relations between them. We use as basic tools some elementary facts from the representation theory of $G$. For each $n\geq 0$, denote $V_n$ the space of binary forms of degree $n$. These spaces are naturally $G$-modules, $G$ acting by linear change of variables. Moreover, they are up to isomorphism the simple $G$-modules (see \cite{SpringerInvariantTheory}). Also, recall the Clebsch-Gordan decomposition
\begin{center}
$V_n\otimes_k V_m\simeq V_{n+m}\oplus V_{n+m-2}\oplus V_{n+m-4}\oplus ... \oplus V_{n-m}$, $\quad n\geq m$.
\end{center}
For a $B$-stable effective divisor $E$ in $X$, let $V_{E}\subset\cox(X)$ denote the simple $G$-module spanned by the canonical section associated with $E$.

\subsubsection{Normal $G/\mu_n$-embeddings, $n\geq 1$}

In this section, $X$ is a normal $G/\mu_n$-embedding $(n\geq 1)$. Remark that if $n\leq 2$, we can still choose two distinct exceptional points $x_0, x_\infty\in\PP^1_k$ which play the same role as in the case $n\geq 3$. Indeed, we can suppose up to a $G$-equivariant automorphism of $X$ that the pairwise distinct exceptional points on $B\backslash G/\mu_n\simeq\PP^1_k$ are
\begin{center}
$x_0=[0:1],x_\infty=[1:0],x_1=[\alpha_1:\beta_1],...,x_r=[\alpha_r:\beta_r],$
\end{center}
with respect to the homogeneous coordinates $g_3^{\bar{n}},g_4^{\bar{n}}$ (\ref{SecAppendixCyclic}). The last proposition provides a surjective morphism of graded $k$-algebras
\begin{center}
$\varphi:S:=\sym_k(V_{E^{x_0}}\oplus V_{E^{x_\infty}}\oplus (\bigoplus_{i=1}^r V_{E^{x_i}}))\otimes_k k[(r_{ij})_{i\in \lbrace 0,\infty,1,...,r\rbrace;j}]\rightarrow \cox(X)$,
\end{center}
where $k[(r_{ij})_{i\in \lbrace 0,\infty,1,...,r\rbrace;j}]\subset\cox(X)$ is a polynomial $k$-algebra, $V_{E^{x_0}}\simeq V_{E^{x_\infty}}\simeq V_1$ and $V_{E^{x_i}}\simeq V_{\bar{n}}$. We view $\cox(X)$ as a graded $k$-subalgebra of the coordinate algebra of the open $G\times\Gamma_{\clg(X)}$-orbit $\hat{X}_0$ in $\hat{X}$, and use the description of this orbit provided by \cite[2.8.6]{avezier_EqCoxRings}
\begin{center}
$\hat{X}_0\simeq G\times^{\mu_n}\Gamma_{\clg(X)}$.
\end{center}
This implies an isomorphism of graded $k$-algebras
\[
\Oo(\hat{X}_0)\simeq \bigoplus_{(k\omega,[D])\in\hat{T}\times\clg(X),\, k\textrm{ mod }n=[D]_{\mid G/\mu_n}}\Oo(G)_{k\omega}^{(T)}e^{[D]},
\]
where $e^{[D]}$ denote the character of $\Gamma_{\clg(X)}$ associated with $[D]$, and $\clg(G/\mu_n)$ is identified with $\ZZ/n\ZZ$. The canonical sections
\begin{center}
$s_0,s_\infty,s_i, r_{ij}$
\end{center}
are respectively identified with
\begin{center}
$g_3e^{[E^{x_0}]},g_4e^{[E^{x_\infty}]},(\beta_i g_3^{\bar{n}}-\alpha_i g^{\bar{n}}_4)e^{[E^{x_i}]},e^{[X^{x_i}_j]}$.
\end{center}
We consider respectively the following $k$-bases of $T$-eigenvectors
\begin{itemize}
\item $(s_0,t_0):=(g_3e^{[E^{x_0}]},g_1e^{[E^{x_0}]})$
\item $(s_\infty,t_\infty):=(g_4e^{[E^{x_\infty}]},g_2e^{[E^{x_\infty}]})$
\end{itemize}
of the simple $G$-modules $V_{E^{x_0}}$ and $V_{E^{x_\infty}}$. To start with, we have the simple case where the only exceptional points are $x_0$ and $x_\infty$:

\begin{prop}\label{Prop_GenRel_Cox_Mun_rEquals0}
If $r=0$, the ideal $\ker\varphi$ is principal, generated by the $G$-invariant relation
\begin{center}
$s_\infty t_0-s_0 t_\infty=\prod_jr_{0j}^{m_{0j}}\prod_jr_{\infty j}^{m_{\infty j}}$,
\end{center}
where $(m_{0j})_{j}$,$(m_{\infty j})_{j}$ are the families of non-negative integers defined by 
\begin{center}
$[E^{x_0}]+[E^{x_\infty}]=\sum_{j}m_{0j}[X^{x_0}_j]+\sum_{j}m_{\infty j}[X^{x_\infty}_j]$.
\end{center}
\end{prop}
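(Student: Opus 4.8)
The plan is to prove that the single element
\[
\rho := s_\infty t_0 - s_0 t_\infty - \prod_j r_{0j}^{m_{0j}}\prod_j r_{\infty j}^{m_{\infty j}}
\]
generates $\ker\varphi$ by showing that the induced surjection $\bar\varphi\colon S/(\rho)\to\cox(X)$ is an isomorphism. First I would check that $\rho\in\ker\varphi$, i.e.\ that the displayed relation holds in $\cox(X)$. Working inside $\Oo(\hat{X}_0)$ via the given identifications, one computes
\[
s_\infty t_0 - s_0 t_\infty = (g_1g_4 - g_2g_3)\,e^{[E^{x_0}]+[E^{x_\infty}]} = e^{[E^{x_0}]+[E^{x_\infty}]},
\]
using $\det g = 1$, while $\prod_j r_{0j}^{m_{0j}}\prod_j r_{\infty j}^{m_{\infty j}} = e^{\sum_j m_{0j}[X^{x_0}_j]+\sum_j m_{\infty j}[X^{x_\infty}_j]}$. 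Hence the relation is equivalent to the class-group identity defining the $m_{ij}$. The existence of \emph{non-negative} integers $m_{ij}$ is the one genuinely geometric input: the element $e^{[E^{x_0}]+[E^{x_\infty}]}$ is $G$-invariant and nowhere vanishing on the open orbit $\hat{X}_0$, so its divisor is effective and supported on the $G$-stable prime divisors $X^{x_0}_j, X^{x_\infty}_j$, which yields both the inequalities $m_{ij}\ge 0$ and the identity.

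Next I would record that $\rho$ is $G$-invariant: under the Clebsch--Gordan decomposition $V_{E^{x_0}}\otimes_k V_{E^{x_\infty}}\simeq V_2\oplus V_0$, the combination $s_\infty t_0 - s_0 t_\infty$ spans the trivial summand $V_0$, and the subtracted monomial is $G$-invariant as a product of the $G$-invariant sections $r_{ij}$. Consequently $(\rho)$ is a $G$-stable ideal and $\bar\varphi$ is a well-defined $G$-equivariant surjection. Then I would prove that $\rho$ is prime in the polynomial algebra $S = k[s_0,t_0,s_\infty,t_\infty,(r_{ij})]$: regarding $\rho$ as a polynomial of degree one in $t_0$, its leading coefficient $s_\infty$ is prime and does not divide the constant term $s_0 t_\infty + \prod_j r_{0j}^{m_{0j}}\prod_j r_{\infty j}^{m_{\infty j}}$, so $\rho$ is primitive and therefore irreducible by Gauss's lemma. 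Thus $S/(\rho)$ is an integral domain.

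The proof then concludes by a dimension count. Writing $p$ for the number of $G$-stable prime divisors of $X$ (equivalently the number of variables $r_{ij}$), one has $\dim S = 4 + p$, hence $\dim S/(\rho) = 3 + p$. On the other hand $\dim\cox(X) = \dim\tilde{X} = \dim X + \dim\Gamma_{\clg(X)} = 3 + p$, where $\dim\Gamma_{\clg(X)}=p$ because the cokernel $\hat{F}$ of the exact sequence (\ref{Eq_ExactSequence1}) is finite. Since $\bar\varphi$ is a surjection of finitely generated integral $k$-algebras of the same Krull dimension, its kernel is a prime ideal of the domain $S/(\rho)$ whose quotient has full dimension; as such rings are catenary and equidimensional, this prime has height zero and hence is $(0)$. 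Therefore $\bar\varphi$ is an isomorphism and $\ker\varphi = (\rho)$, as claimed.

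The main obstacle is the verification that $\rho$ actually lies in $\ker\varphi$ --- more precisely, producing the non-negative integers $m_{ij}$, which rests on identifying $e^{[E^{x_0}]+[E^{x_\infty}]}$ with a $G$-invariant section whose zero divisor is carried by the $G$-stable divisors. Once this and the irreducibility of $\rho$ are in hand, the equality of dimensions is exactly tight and the domain argument is immediate. In particular no analysis of $U$-invariants beyond the polynomiality of $\cox(X)^U$ recorded above is needed, although that polynomiality furnishes an alternative computation of $\dim\cox(X)$ and thereby an independent route to the final step.
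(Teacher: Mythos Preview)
Your proof is correct and follows essentially the same approach as the paper's: both rest on the dimension count $\dim S = 4+p$ versus $\dim\cox(X)=3+p$ (with $p$ the number of $G$-stable prime divisors), the explicit computation $s_\infty t_0-s_0 t_\infty=e^{[E^{x_0}]+[E^{x_\infty}]}$, and the irreducibility of $\rho$. The only difference is organizational: the paper first observes that $\ker\varphi$ is a height-one prime in the UFD $S$, hence principal with a $G$-invariant generator (since $\SL_2$ has no nontrivial characters), and then identifies the generator; you instead first exhibit $\rho$, prove it irreducible directly via Gauss's lemma, and then use the dimension count to conclude that $S/(\rho)\to\cox(X)$ is injective. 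Your version is more self-contained in that it avoids the reference \cite[2.8.5]{avezier_EqCoxRings} by arguing directly that the zero divisor of the $G$-invariant element $e^{[E^{x_0}]+[E^{x_\infty}]}\in\cox(X)$ is effective and supported on the boundary.
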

\begin{proof}
In this situation, $S$ is a polynomial $k$-algebra of dimension $4+\sharp((r_{0j})_j)+\sharp((r_{\infty j})_j)$, and the dimension of $\cox(X)$ is $3+\sharp((r_{0j})_j)+\sharp((r_{\infty j})_j)$. It follows that $\ker\varphi$ is a principal ideal generated by a $G$-invariant relation. The determinant $s_\infty t_0-s_0 t_\infty=e^{[E^{x_0}]+[E^{x_\infty}]}$ is a non-zero $G$-invariant homogeneous element in $\cox(X)$. Using \cite[2.8.5]{avezier_EqCoxRings}, we obtain a $G$-invariant irreducible relation as in the above statement which necessarily generates $\ker\varphi$.
\end{proof}

We now treat the case where $r\geq 1$ (i.e. $X$ admits at least three exceptional points). For this, we first show that $X$ can be assumed to satisfy the following technical conditions:
\begin{itemize}
\item the special fiber is a normal variety,
\item the class group of $X$ is torsion-free.
\end{itemize}
Recall that the special fiber is a normal variety if and only if
\begin{center}
$n\leq 2$, or $(X^{x_0}_j)_j\neq\emptyset$ and $(X^{x_\infty}_j)_j\neq\emptyset\quad$ (Propositions \ref{PropSpecialFiberTrivial} and \ref{PropSpecialFiberCyclic}).
\end{center}
Consider the $G$-stable open subvariety $V$ consisting of the orbits of codimension $\leq 1$. We have $\cox(V)\simeq \cox(X)$ because $V$ has a complement of codimension $\geq 2$ in $X$. Thus, we can replace $X$ by $V$. Also, consider an almost homogeneous $G$-threefold $Y$ having three orbits, namely the open orbit and two $G$-stable prime divisors $Y^{0}$ and $Y^{\infty}$. We can choose $Y$ so that $v_{Y^{0}}, v_{Y^{\infty}}\notin \Vv(X)$ and $\pi(Y^{0})=x_0$, $\pi(Y^{\infty})=x_\infty$. Using the valuative criterion of separation (\cite[App. B]{TimashevBook}), it is immediate to verify that $X$ and $Y$ glue together into an almost homogeneous $G$-threefold $X_1$. By \cite[2.8.1]{avezier_EqCoxRings}, we have an isomorphism
\begin{center}
$\cox(X)\simeq\cox(X_1)/(r_0-1, r_\infty-1)$,
\end{center}
where $r_0,r_\infty$ are the canonical sections respectively associated to $Y^0, Y^\infty$. Also, $X_1$ has by construction at least three exceptional points and an associated special fiber which is a normal variety. 

Now, if $X_1$ has a class group with a non-trivial torsion subgroup. Then, we can consider a smooth complete almost homogeneous $G$-threefold $X'_1$ containing the smooth locus of $X_1$ as an open $G$-stable subvariety (\cite[2.8.4]{avezier_EqCoxRings}). By \cite[3.1.7]{avezier_EqCoxRings}, $\pic(X'_1)$ is free of finite rank, and it is directly checked that $X_1'$ has at least three exceptional points, and an associated special fiber which is a normal variety. As above, we have an isomorphism
\begin{center}
$\cox(X_1)\simeq\cox(X_1')/((r_i-1)_i)$,
\end{center}
where the $r_i$ are the canonical sections associated to the $G$-stable prime divisors lying in $X_1'\setminus (X_1)_{\rm sm}$.

Possibly replacing $X$ by $X_1$ or $X_1'$, we can suppose that $X$ has a torsion free class group, at least three exceptional points, and a special fiber which is a normal variety. In Proposition \ref{Prop_RelationCoxCyclic}, we prove that certain $G$-submodules of $S$ generate the ideal $\ker\varphi$, we start by defining these $G$-submodules.
For $k,l\in\lbrace 0,\infty,1,...,r\rbrace$, consider the natural surjective morphisms of $G$-modules
$$
\begin{cases}
\varphi_{kl}:V_{E^{x_k}}\otimes_k V_{E^{x_l}}\rightarrow V_{E^{x_k}}V_{E^{x_l}}\subset \cox(X)_{[E^{x_k}]+[E^{x_l}]},\,k< l\\
\varphi_{kk}:\sym_k^2(V_{E^{x_k}})\rightarrow V_{E^{x_k}}^2\subset \cox(X)_{2[E^{x_k}]},
\end{cases}
$$
obtained via restriction of $\varphi$. Using the Clebsch-Gordan decomposition, we obtain via a direct computation that any $B$-semi-invariant in a product $V_{E^{x_k}}V_{E^{x_l}}$ is a non-zero scalar multiple of an element of the form
\begin{center}
$g_3^{n_{0,kl}}g_4^{n_{\infty,kl}} e^{[E^{x_k}]+[E^{x_l}]} \in\cox(X)_{[E^{x_k}]+[E^{x_l}]}\subset \Oo(G)e^{[E^{x_k}]+[E^{x_l}]}$.
\end{center}
As $\cox(X)^U$ is generated as a $k$-algebra by the elements 
\begin{center}
$(s_i)_{i\in \lbrace 0,\infty,1,...,r\rbrace},(r_{ij})_{i\in \lbrace 0,\infty,1,...,r\rbrace;j}$,
\end{center}
it follows from the form of this $B$-semi-invariant that it equals a monomial
\begin{center}
$s_0^{n_{0,kl}}s_\infty^{n_{\infty,kl}}\prod_{i\in \lbrace 0,\infty,1,...,r\rbrace;j} r_{ij}^{a_{i,klj}}$,
\end{center}
where the $(a_{i,klj})_{i\in \lbrace 0,\infty,1,...,r\rbrace;j}$ is the family of non-negative integers defined by
\begin{center}
$[E^{x_k}]+[E^{x_l}]=n_{0,kl}[E^{x_0}]+n_{\infty,kl}[E^{x_\infty}]+\sum_{i\in \lbrace 0,\infty,1,...,r\rbrace;j}a_{i,klj}[X^{x_i}_j]$.
\end{center}
The table below lists the $B$-weights occurring in the products $V_{E^{x_k}}V_{E^{x_l}}$. For each $B$-weight $m\omega$ in the table, the third column provides an explicit $B$-semi-invariant $x_{kl,m\omega}$. To shorten the notation, we let 
\begin{center}
$\mathbf{r}^{\mathbf{a}_{kl,m\omega}}:=\prod_{i\in \lbrace 0,\infty,1,...,r\rbrace;j} r_{ij}^{a_{i,klj}}$.
\end{center}

\begin{center}
\begin{tabular}{|m{9em}|c|c|} 
 \hline
 $G$-module & $B$-weight: $m\omega$ & $B$-semi-invariant: $x_{kl,m\omega}$ \\ 
 \hline\hline
  \multirow{2}{6em}{$V_{E^{x_0}}V_{E^{x_\infty}}$} & $2\omega$ & $s_0s_\infty$ \\ 
	& $0$ & $\mathbf{r}^{\mathbf{a}_{0\infty,0}}$ \\
 \hline
	 $V_{E^{x_0}}^2$& $2\omega$ & $s_0^2$\\
 \hline
	 $V_{E^{x_\infty}}^2$& $2\omega$ & $s_\infty^2$\\
 \hline 
\multirow{2}{6em}{$V_{E^{x_0}}V_{E^{x_i}}$ \linebreak ${\scriptstyle (1\leq i \leq r)}$} & $(\bar{n}+1)\omega$ & $s_0s_i$ \\ 
	& $(\bar{n}-1)\omega$ & $s_\infty^{\bar{n}-1} \mathbf{r}^{\mathbf{a}_{0i,(\bar{n}-1)\omega}}$ \\
 \hline
\multirow{2}{6em}{$V_{E^{x_\infty}}V_{E^{x_i}}$ \linebreak ${\scriptstyle (1\leq i \leq r)}$} & $(\bar{n}+1)\omega$ & $s_\infty s_i$ \\ 
	& $(\bar{n}-1)\omega$ & $s_0^{\bar{n}-1} \mathbf{r}^{\mathbf{a}_{\infty i,(\bar{n}-1)\omega}}$ \\
	 \hline
\multirow{2}{6em}{$V_{E^{x_i}}^2\, {\scriptstyle (1\leq i \leq r)}$} & $2{\bar{n}}\omega$ & $s_i^2$ \\ 
	& $m_p\omega,m_p=2({\bar{n}}-2p),p=1,...,\lfloor \frac{{\bar{n}}}{2}\rfloor$ & $s_0^{{\bar{n}}-2p}s_\infty^{{\bar{n}}-2p} \mathbf{r}^{\mathbf{a}_{ii,m_p\omega}}$	\\ 
 \hline
\multirow{2}{6em}{$V_{E^{x_i}}V_{E^{x_j}}$\linebreak ${\scriptstyle (1\leq i<j\leq r,\,\alpha_i\beta_j+\alpha_j\beta_i\neq 0)}$} & $2{\bar{n}}\omega$ & $s_is_j$ \\ 
	& $m_p\omega,m_p=2({\bar{n}}-p),p=1,...,{\bar{n}}$ & $s_0^{{\bar{n}}-p}s_\infty^{{\bar{n}}-p} \mathbf{r}^{\mathbf{a}_{ij,m_p\omega}}$ \\
 \hline
\multirow{2}{6em}{$V_{E^{x_i}}V_{E^{x_j}}$\linebreak ${\scriptstyle (1\leq i<j\leq r,\,\alpha_i\beta_j+\alpha_j\beta_i= 0)}$} & $2{\bar{n}}\omega$ & $s_is_j$ \\ 
	& $m_p\omega,m_p=2({\bar{n}}-(2p+1)),p=0,...,\lfloor \frac{{\bar{n}}-1}{2}\rfloor$ & $s_0^{{\bar{n}}-(2p+1)}s_\infty^{{\bar{n}}-(2p+1)} \mathbf{r}^{\mathbf{a}_{ij,m_p\omega}}$ \\[2pt]
 \hline
\end{tabular}
\end{center}
\hfill\break
For $k,l\in\lbrace 0,\infty,1,...,r\rbrace$, $0\leq k\leq l\leq \infty$, and $m\omega$ a $B$-weight occurring in $V_{E^{x_k}}V_{E^{x_l}}$, let $y_{kl,m\omega}$ be the unique $B$-semi-invariant in $S$ sent to $x_{kl,m\omega}$ by $\varphi_{kl}$. Then, consider the following $G$-submodules of $\ker\varphi$:
\begin{center}
$M_{kl}:=(\bigoplus_{m\omega}<G.(x_{kl,m\omega}-y_{kl,m\omega})>)\oplus\ker\varphi_{kl}$,
\end{center}
where $x_{kl,m\omega}$ is naturally identified with an element of $S$. Also, in view of the description of $\cox(X)^U$ by generators and relations (Section \ref{Sec_PresGenRel_CoxU_Cyclic}), define for $i=1,...,r$ the following $G$-submodules of $\ker\varphi$:
\begin{center}
$N_i:=<G.(\beta_i s_0^{\bar{n}}\otimes\prod_jr_{0j}^{h_{0j}}- \alpha_i s_\infty^{\bar{n}}\otimes\prod_jr_{\infty j}^{h_{\infty j}}-s_i\otimes\prod_j (r_{ij})^{h_{ij}})>\simeq V_{\bar{n}}$.
\end{center}

\begin{prop}\label{Prop_RelationCoxCyclic}
The ideal $\ker\varphi$ is generated by the $G$-modules
$M_{kl}$ and $N_i$.
\end{prop}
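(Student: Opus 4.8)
The plan is to reduce the statement to a computation of $U$-invariants, where the presentation of $\cox(X)^U$ from Theorem \ref{ThmUinvCoxG} is available, and then transport the result back to the full $G$-algebra via the exactness of the $U$-invariants functor. Write $I:=\ker\varphi$ and let $J\subseteq I$ be the $G$-stable ideal generated by the submodules $M_{kl}$ and $N_i$. Since $\varphi$ is a surjection of $G$-algebras, $I$ is $G$-stable; as $G$ is reductive in characteristic zero, the functor $(-)^U$ is exact (\cite[D.5]{TimashevBook}), so applying it to $0\to I\to S\to\cox(X)\to 0$ gives $I^U=\ker(\varphi^U:S^U\to\cox(X)^U)$. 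The backbone is the elementary remark that for $G$-stable ideals $J\subseteq I$ one has $J=I$ if and only if $J^U=I^U$: indeed $(I/J)^U=I^U/J^U$ by exactness, and a nonzero $G$-module always carries a nonzero highest weight vector, so $(I/J)^U=0$ forces $I/J=0$. Let $f_\beta$ denote the highest weight vectors of the $M_{kl}$ and $N_i$: each $N_i\cong V_{\bar n}$ contributes $\nu_i:=\beta_i s_0^{\bar n}\prod_j r_{0j}^{h_{0j}}-\alpha_i s_\infty^{\bar n}\prod_j r_{\infty j}^{h_{\infty j}}-s_i\prod_j r_{ij}^{h_{ij}}$, while $M_{kl}$ contributes the vectors $x_{kl,m\omega}-y_{kl,m\omega}$ together with the highest weight vectors of $\ker\varphi_{kl}$. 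Since each $f_\beta$ is $U$-invariant and lies in $J$, we have $S^U f_\beta\subseteq J^U$; hence, writing $I_0$ for the $S^U$-ideal generated by the $f_\beta$, it suffices to prove $\ker\varphi^U=I_0$, for then $I^U\subseteq J^U\subseteq I^U$ and $J=I$.

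Inside $S^U$ sits the polynomial subring $T:=k[s_0,s_\infty,s_1,\dots,s_r,r_{ij}]$ on the leading coefficients and the $G$-invariant variables $r_{ij}$; these are algebraically independent, being leading coefficients of pairwise non-isomorphic simple summands. By Theorem \ref{ThmUinvCoxG} and Proposition \ref{Prop_RAP0} (condition $(\star)$ holds since $X$ is almost homogeneous, Remark \ref{Rem_ThmUinvCoxG}), and after the normalization of Section \ref{Sec_PresGenRel_CoxU_Cyclic} that eliminates $a,b$ through $x_0,x_\infty$, the restriction $\varphi^U|_T:T\to\cox(X)^U$ is surjective with kernel exactly $(\nu_1,\dots,\nu_r)\subseteq I_0$. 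Consequently, the equality $\ker\varphi^U=I_0$ would follow if I show that every $h\in\ker\varphi^U$ is congruent modulo $I_0$ to an element $\bar h\in T$: for then $\bar h\in\ker(\varphi^U|_T)=(\nu_i)\subseteq I_0$, whence $h\in I_0$.

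The heart of the argument is therefore to reduce an arbitrary $U$-invariant of $S$ into $T$ modulo $I_0$. The degree-two invariants are immediate: each $y_{kl,m\omega}$ is congruent to the explicit monomial $x_{kl,m\omega}\in T$ by the very definition of $M_{kl}$, and the table records all of these. The main obstacle is that $S^U$ is \emph{not} generated by its degree-$\leq 2$ part: already for a single summand $V_{\bar n}$ with $\bar n\geq 4$ the covariant algebra $\sym(V_{\bar n})^U$ has higher-degree generators, e.g. the leading coefficient of the transvectant $(f,(f,f)^{(2)})^{(1)}$ of a binary quartic, which are plainly not reachable from the $M_{kl}$ alone. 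I would resolve this using the relations $N_i$: as an identity of $G$-modules modulo $N_i$, the module $V_{E^{x_i}}$ multiplied by $\prod_j r_{ij}^{h_{ij}}$ is identified with $\beta_i\,\sym^{\bar n}(V_{E^{x_0}})\prod_j r_{0j}^{h_{0j}}-\alpha_i\,\sym^{\bar n}(V_{E^{x_\infty}})\prod_j r_{\infty j}^{h_{\infty j}}$, where $\sym^{\bar n}V_1\cong V_{\bar n}$ is irreducible. Since transvection is a covariant bilinear operation, every covariant built from the $f_i$ reduces, after multiplication by a suitable $r$-monomial and modulo $(N_i)$, to a covariant of the two linear forms $f_0,f_\infty$ alone. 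The joint covariant algebra of two linear forms in general position (the determinant $D=(f_0,f_\infty)^{(1)}$ is a nonzero invariant) is generated in degree $\leq 2$, by $s_0,s_\infty$ and $D$; and $D$ is congruent to the monomial $\mathbf r^{\mathbf a_{0\infty,0}}$ by $M_{0\infty}$, which is exactly the relation of Proposition \ref{Prop_GenRel_Cox_Mun_rEquals0}. Hence every covariant reduces into $k[s_0,s_\infty,r_{ij}]\subseteq T$.

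The delicate point is the spurious factor $\prod_j r_{ij}^{h_{ij}}$ produced when invoking $N_i$: the reduction naturally yields a congruence of the form $z\cdot\prod r\equiv(\text{element of }T)$ rather than one for $z$ itself. I would handle this by working in the pieces homogeneous for the $\clg(X)$-grading and inducting on the total degree in the generators $s_1,\dots,s_r$, each use of $N_i$ strictly lowering this degree by trading an $s_i$ for powers of $s_0,s_\infty$, so that no actual division by the $r_{ij}$ is needed; alternatively one localizes at the non-zero-divisors $r_{ij}$, reduces there, and descends using that $\cox(X)$ is a normal domain in which the $r_{ij}$ are regular. Once every element of $\ker\varphi^U$ is reduced into $T$ modulo $I_0$, the second paragraph closes the argument, giving $\ker\varphi^U=I_0$, and the first paragraph then yields that the $M_{kl}$ and $N_i$ generate $\ker\varphi$. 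As a consistency check on the mechanism, the case $r=0$ recovers Proposition \ref{Prop_GenRel_Cox_Mun_rEquals0}, the single principal relation being precisely $M_{0\infty}$.
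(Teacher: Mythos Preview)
Your high-level strategy coincides with the paper's: both reduce to $U$-invariants via exactness of $(-)^U$ and aim to show that $(S/J)^U$ is generated as a $k$-algebra by the images of the $s_i$ and $r_{ij}$, after which the known presentation of $\cox(X)^U$ finishes the argument. The implementations, however, diverge substantially.

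The paper does \emph{not} attempt a direct reduction of arbitrary covariants into $T$. Instead it exploits the preparatory hypotheses imposed just before the proposition (special fiber normal, $\clg(X)$ torsion-free) together with Lemmas~\ref{Lem_BSemiInv_NotMax} and~\ref{Lem_FonctionsConstants_3exceptionalpoints}. Concretely: modulo the ideal $(r_{ij})$ the modules $N_i$ vanish (each term carries an $r$-factor by the normality assumption), and by Lemma~\ref{Lem_BSemiInv_NotMax} every $x_{kl,m\omega}$ with $m\omega$ not maximal also vanishes; hence in $S/(J,(r_{ij}))$ each product $V_{E^{x_k}}V_{E^{x_l}}$ is its Cartan product. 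A result of Kostant then forces \emph{all} products of simple submodules to be Cartan or zero, so $(S/(J,(r_{ij})))^U$ is generated by the $s_i$. Lemma~\ref{Lem_FonctionsConstants_3exceptionalpoints} provides a positive $\ZZ$-grading with the $r_{ij}$ in positive degree, and the graded Nakayama lemma lifts the generation statement to $(S/J)^U$. You invoke none of these ingredients.

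Your alternative route through transvectants and the covariant algebra of two linear forms is attractive, but the step you flag as ``delicate'' is a genuine gap. Using $N_i$ only yields a congruence $h\cdot R\equiv t\pmod{I_0}$ with $t\in T$ and $R$ an $r$-monomial; to conclude $h\in I_0$ you need either that $R$ is a non-zero-divisor in $S^U/I_0$ or that $I_0$ is $R$-saturated, and neither is established. Your induction sketch does not avoid this: applying $N_i$ always introduces the factor $\prod_j r_{ij}^{h_{ij}}$, so lowering the $s_1,\dots,s_r$-degree is purchased precisely at the cost of multiplying by $R$, not ``trading'' without it. The localization variant likewise requires descent from $S^U[r_{ij}^{-1}]/I_0$ to $S^U/I_0$, which presupposes exactly the regularity of the $r_{ij}$ modulo $I_0$ that you have not proved (appealing to $\cox(X)$ being a domain is circular, since that is what the identification $S^U/I_0\cong\cox(X)^U$ would give). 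The paper's passage to the special fiber followed by graded Nakayama is precisely the device that replaces this missing saturation argument.
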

\begin{proof}
Consider the natural $\clg(X)$-grading on $S$ induced by the canonical projection $\wdiv(X)\rightarrow \clg(X)$, and let $I$ be the homogeneous ideal generated by the $G$-modules $M_{kl}$ and $N_i$. We prove that the surjective morphism of graded $k$-algebras
\begin{center}
$\bar{\varphi}:S/I\rightarrow \cox(X)$
\end{center}
induced by $\varphi$ is an isomorphism. This follows from the claim that $(S/I)^U$ is generated as a $k$-algebra by the images of the $s_i$, $r_{ij}$, $i\in\lbrace 0,\infty,1,...,r\rbrace$. Indeed, the algebra $(S/I)^U$ can then be presented as the quotient of the polynomial $k$-algebra in the elements $s_i$, $r_{ij}$ modulo an ideal $J$ of relations. In view of the presentation of $\cox(X)^U$ and of the morphism
\begin{center}
$\bar{\varphi}_U:(S/I)^U\rightarrow \cox(X)^U$
\end{center}
induced by $\bar{\varphi}$, the ideal $J$ is contained in the ideal generated by $\oplus_{i=1}^r N_i^U$. As we have the reverse inclusion by definition of $I$, it follows that the morphism $\bar{\varphi}_U$ is an isomorphism, thus $\bar{\varphi}$ is so.

We now prove the above claim. Remark that because the special fiber is a normal variety, the $G$-modules $N_i$ become zero modulo $((r_{ij})_{i\in \lbrace 0,\infty,1,...,r\rbrace;j})$. The $k$-algebra $S/(I,(r_{ij})_{ij})$ is generated by the simple $G$-modules $V_{E^{x_i}}$, $i\in \lbrace 0,\infty,1,...,r\rbrace$, and it follows from Lemma \ref{Lem_BSemiInv_NotMax}, the preceding remark, and the definition of the $G$-modules $M_{kl}$, that the product of any two of them in $S/(I,(r_{ij})_{ij})$ is their Cartan product. As a consequence of a result of Kostant (see e.g. \cite[4.1 Lemme]{BrionRepGroupesExc}), the product of any two simple $G$-modules in $S/(I,(r_{ij})_{ij})$ is again their Cartan product or zero. This implies that  $(S/(I,(r_{ij})_{ij}))^U$ is generated as a $k$-algebra by the images of the $s_i$, $i\in\lbrace 0,\infty,1,...,r\rbrace$. Let $A$ be the $k$-subalgebra of $(S/I)^U$ generated by the images of the $s_i,r_{ij}$, $i\in\lbrace 0,\infty,1,...,r\rbrace$. Viewing $A$ and $(S/I)^U$ as $\clg(X)$-graded $k[(r_{ij})_{ij}]$-modules, the above description of $(S/(I,(r_{ij})_{ij}))^U$ yields
\begin{center}
$(S/I)^U=A+((r_{ij})_{ij})(S/I)^U$.
\end{center}
By Lemma \ref{Lem_FonctionsConstants_3exceptionalpoints}, the regular functions on $X$ are constant. This implies that $S/I$ (hence $(S/I)^U$ and $A$) can be endowed with a coarser positive grading such that the elements $r_{ij}$ have non-zero degree. Indeed, $S/I$ is naturally graded by the cone of effective divisors in $\clg(X)_\QQ$, but this cone contains no line as $\Oo(X)\simeq k$. Thus, we can choose convenient positive integers $a_{ij}$ such that the linear map $\clg(X)\rightarrow\ZZ$ defined by $[X^{x_i}_j]\mapsto a_{ij}$ is positive on generators. Now, the graded Nakayama lemma yields $(S/I)^U=A$, whence the claim. 
\end{proof}

\begin{cor}\label{Prop_IdealRelation_GEmbedding}
Suppose that $n\leq 2$. Then, the ideal $\ker\varphi$ is generated by the $G$-invariant relations
\begin{center}
$s_k t_l-s_l t_k=(\alpha_k\beta_l-\alpha_l\beta_k)\prod_{i\in \lbrace 0,\infty,1,...,r\rbrace;j}r^{m_{klij}}_{ij}$, $k,l\in\lbrace 0,\infty,1,...,r\rbrace$, $0\leq k< l\leq \infty$,
\end{center}
where $(m_{klij})_{ij}$ are the families of non-negative integers defined by 
\begin{center}
$[E^{x_k}]+[E^{x_l}]=\sum_{ij}m_{klij}[X^{x_i}_j]$,
\end{center}
and the simple $G$-modules $N_i$.
\end{cor}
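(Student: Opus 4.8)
The plan is to derive this as the specialization of Proposition \ref{Prop_RelationCoxCyclic} to the case $\bar n=1$, which is exactly what $n\leq 2$ forces: by definition $\bar n=n$ for $n$ odd and $\bar n=n/2$ for $n$ even, so $\bar n=1$ for both $n=1$ and $n=2$. The decisive consequence is that every exceptional module is the standard representation, $V_{E^{x_i}}\simeq V_{\bar n}=V_1$ for all $i\in\{0,\infty,1,\dots,r\}$. I would first extend the chosen bases $(s_0,t_0)$, $(s_\infty,t_\infty)$ to a basis $(s_i,t_i)$ of $T$-eigenvectors of each $V_{E^{x_i}}$, normalized so that $s_i$ is the canonical section; inside $\Oo(\hat X_0)$ one has, up to sign, $s_i=(\beta_i g_3-\alpha_i g_4)e^{[E^{x_i}]}$ and $t_i=(\beta_i g_1-\alpha_i g_2)e^{[E^{x_i}]}$. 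After the reductions already performed (torsion-free class group, and normality of the special fiber, which for $n\leq 2$ is automatic by Proposition \ref{PropSpecialFiberTrivial}), Proposition \ref{Prop_RelationCoxCyclic} applies when $r\geq 1$; the degenerate case $r=0$ is Proposition \ref{Prop_GenRel_Cox_Mun_rEquals0}, whose single relation is precisely the $k=0,l=\infty$ instance of the displayed family.

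So assume $r\geq 1$ and analyze the modules $M_{kl}$ of Proposition \ref{Prop_RelationCoxCyclic} under $\bar n=1$. Reading off the table, every intermediate weight range collapses once $\bar n=1$: the Clebsch--Gordan decompositions reduce to $V_{E^{x_k}}\otimes V_{E^{x_l}}=V_2\oplus V_0$ for $k\neq l$ and $\sym_k^2(V_{E^{x_k}})=V_2$ for $k=l$, with no further summands. I would then check that each restriction $\varphi_{kl}$ (resp. $\varphi_{kk}$) is injective, hence $\ker\varphi_{kl}=0$: the highest weight vector maps to the nonzero Cartan product $s_ks_l$ (resp. $s_k^2$), and for $k\neq l$ the weight-zero vector maps to $s_kt_l-s_lt_k$; since $V_2$ and $V_0$ are nonisomorphic their images sit in distinct isotypic components and stay independent. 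Consequently $M_{kk}=0$, while for $k<l$ the only surviving generator of $M_{kl}$ is the weight-zero relation $x_{kl,0}-y_{kl,0}$, the highest-weight counterpart $x_{kl,2\omega}-y_{kl,2\omega}=s_ks_l-s_ks_l$ vanishing identically in $S$.

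It remains to put the weight-zero relation into the displayed form. A direct determinant computation, expanding $s_kt_l-s_lt_k$ and using $\det(g)=g_1g_4-g_2g_3=1$, gives $s_kt_l-s_lt_k=(\alpha_k\beta_l-\alpha_l\beta_k)\,e^{[E^{x_k}]+[E^{x_l}]}$ (absorbing the overall sign into the normalization of the $t_i$), so this vector is nonzero precisely when $x_k\neq x_l$. On the other hand $r_{ij}$ is identified with $e^{[X^{x_i}_j]}$, so the class-group identity $[E^{x_k}]+[E^{x_l}]=\sum_{ij}m_{klij}[X^{x_i}_j]$ yields $e^{[E^{x_k}]+[E^{x_l}]}=\prod_{ij}r_{ij}^{m_{klij}}$. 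Hence $y_{kl,0}=s_kt_l-s_lt_k$ and $x_{kl,0}=(\alpha_k\beta_l-\alpha_l\beta_k)\prod_{ij}r_{ij}^{m_{klij}}$, and the generator of $M_{kl}$ is exactly the asserted $G$-invariant relation. Since the modules $N_i$ carry over unchanged (now $N_i\simeq V_1$), the corollary follows.

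The step I expect to be the main obstacle is the bookkeeping of the middle paragraph: verifying from the table that for $\bar n=1$ no intermediate weight vectors survive, so that each $M_{kl}$ contributes a single $G$-invariant, establishing injectivity of $\varphi_{kl}$, and matching both the scalar $(\alpha_k\beta_l-\alpha_l\beta_k)$ and the monomial $\prod_{ij}r_{ij}^{m_{klij}}$ via the determinant and the identification $r_{ij}=e^{[X^{x_i}_j]}$. The representation theory is elementary, but reconciling the sign and normalization conventions so that the relation appears in precisely the stated form requires care.
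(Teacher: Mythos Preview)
Your proof is correct and follows essentially the same approach as the paper: specialize Proposition~\ref{Prop_RelationCoxCyclic} to $\bar n=1$, observe that $M_{kk}=0$ while each $M_{kl}$ with $k<l$ collapses to the $G$-invariant line spanned by $s_kt_l-s_lt_k-(\alpha_k\beta_l-\alpha_l\beta_k)\mathbf{r}^{\mathbf{a}_{kl,0}}$, and carry the $N_i$ along unchanged. The paper's proof is a single sentence to this effect; you have simply written out in detail the verification (Clebsch--Gordan collapse, injectivity of $\varphi_{kl}$, and the determinant computation) that the paper leaves implicit.
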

\begin{proof}
It suffices to notice that in this case, the $M_{kk}$ are trivial, and the $M_{kl}$, $k<l$ are the $G$-invariant lines spanned by the elements $s_k t_l-s_l t_k-(\alpha_k\beta_l-\alpha_l\beta_k)\mathbf{r}^{\mathbf{a}_{kl,0}}$.
\end{proof}

\begin{lem}\label{Lem_BSemiInv_NotMax}
Let $Z$ be a normal $G/\mu_n$-embedding ($n\geq 1$) whose associated special fiber is a normal variety, and admitting three or more exceptional points. Then, the special fiber is a horospherical variety.
\end{lem}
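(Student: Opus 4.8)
The plan is to identify the coordinate algebra of the special fiber as $R:=\cox(Z)/\mathfrak{a}$, where $\mathfrak{a}$ is the ideal generated by the canonical sections of the $G$-stable prime divisors of $Z$, and then to prove that $R$ is horospherical by controlling the multiplication of the generating simple $G$-modules. By Proposition \ref{Prop_HighestWeightTheory}, $R$ is generated as a $k$-algebra by the images of the simple modules $V_{E^{x_k}}$ attached to the exceptional colors, so by Kostant's lemma (\cite[4.1 Lemme]{BrionRepGroupesExc}) it suffices to show that for all exceptional points $x_k, x_l$ the product $V_{E^{x_k}}\cdot V_{E^{x_l}}$ computed in $R$ is either zero or the Cartan product, i.e. the top Clebsch--Gordan summand $V_{\lambda_k+\lambda_l}$, where $\lambda_k$ is the highest weight of $V_{E^{x_k}}$ (equal to $\omega$ for $k\in\{0,\infty\}$ and to $\bar{n}\omega$ otherwise).

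Next I would reduce this to the vanishing of the non-maximal isotypic pieces. The product $V_{E^{x_k}}\cdot V_{E^{x_l}}$ is a $G$-equivariant quotient of $V_{E^{x_k}}\otimes V_{E^{x_l}}$, hence the sum of the Cartan summand (whose $B$-highest weight vector is the image of $s_k s_l$) and strictly lower summands. Each lower summand is a simple module generated by its $B$-semi-invariant, so it vanishes in $R$ as soon as that semi-invariant does. Using the embedding $\cox(Z)^U\hookrightarrow\Oo(\hat{Z}_0)\simeq\Oo(G\times^{\mu_n}\Gamma_{\clg(Z)})$ of \cite[2.8.6]{avezier_EqCoxRings}, these lower semi-invariants are exactly the monomials $x_{kl,m\omega}=s_0^{c_0}s_\infty^{c_\infty}\mathbf{r}^{\mathbf{a}_{kl,m\omega}}$ listed in the table of Section \ref{SecPresentationCox} (the same Clebsch--Gordan computation applies verbatim to $Z$). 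Everything therefore comes down to showing that each such monomial carries a positive power of some canonical section of a $G$-stable prime divisor, that is $\mathbf{a}_{kl,m\omega}\neq 0$, so that it maps to $0$ in $R$.

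This last claim is where the hypotheses enter, and it is the main obstacle. Since $Z$ has at least three exceptional points, normality of the special fiber forces, through Proposition \ref{PropSpecialFiberCyclic} (and Proposition \ref{PropSpecialFiberTrivial} when $n\leq 2$), that $(X^{x_0}_j)_j\neq\emptyset$ and $(X^{x_\infty}_j)_j\neq\emptyset$, so that $\prod_j r_{0j}^{h_{0j}}$ and $\prod_j r_{\infty j}^{h_{\infty j}}$ are non-constant. The heuristic is that a lower semi-invariant shares the $\clg(Z)$-degree $[E^{x_k}]+[E^{x_l}]$ of $s_k s_l$ but has strictly smaller $B$-weight; as the generators $s_0,s_\infty,s_i$ have $B$-weight that grows with their degree while the $r_{ij}$ have $B$-weight zero and nonzero degree, lowering the weight at fixed degree is only possible by trading $s$-factors for $r$-factors, whence $\mathbf{a}_{kl,m\omega}\neq 0$. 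I would make this precise by constructing the $\QQ$-linear functional $\psi\colon\clg(Z)\to\QQ$ determined by $\psi([E^{x_0}])=\psi([E^{x_\infty}])=1$, $\psi([E^{x_i}])=\bar{n}$ for $i\geq 1$, and $\psi([X^{x_i}_j])=0$, and checking that it is well defined on the relations of Proposition \ref{Prop_ClassGroup_GenRelations}. The first family is respected at once, since each $[\pi^*(x)]$ has $\psi$-value $\bar{n}$; the delicate verification is that $\psi$ annihilates the principal-divisor relation $\sum_{D}l_D[D]=0$, which (parametric colors having $\Ee$-coordinate zero) reduces to the identity $l_{E^{x_0}}+l_{E^{x_\infty}}+\bar{n}\sum_{i\geq 1}l_{E^{x_i}}=0$ among the $\Ee$-coordinates of the exceptional colors recorded in the colored equipment of $G/\mu_n$ (Appendix \ref{SecAppendix}).

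Granting $\psi$, every monomial in $R^U$ of degree $[E^{x_k}]+[E^{x_l}]$ has $B$-weight $\psi([E^{x_k}]+[E^{x_l}])\,\omega=\lambda_k+\lambda_l$, so no lower semi-invariant can be a pure monomial in the $s$'s; equivalently every lower $x_{kl,m\omega}$ involves some $r_{ij}$ and hence dies in $R$. Combined with the remark that $s_k s_l\neq 0$ in $R$ exactly when both factors survive (when one does not, $V_{E^{x_k}}$ maps to $0$ and the product is zero), this shows that every product of generating simple modules is the Cartan product or zero, and horosphericity follows from the first step. For $n\leq 2$ the argument is lighter, since all $V_{E^{x_k}}\simeq V_1$ and the single lower summand is the invariant piece, which is a pure $r$-monomial by \cite[2.8.5]{avezier_EqCoxRings}. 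I expect the evaluation of $\psi$ on the principal-divisor relation to be the only genuinely technical ingredient, the normality hypothesis being precisely what guarantees that the $r$-factors it produces are non-trivial.
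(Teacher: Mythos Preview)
Your reduction is the same as the paper's: it suffices to show that for every non-maximal $B$-weight $m\omega$ in $V_{E^{x_k}}V_{E^{x_l}}$, the exponent vector $\mathbf{a}_{kl,m\omega}$ is nonzero, so that the corresponding $B$-semi-invariant dies modulo the $r_{ij}$'s. The problem is your tool for proving this: the functional $\psi$ does not exist.

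Here is the obstruction. The element $s_\infty t_0 - s_0 t_\infty\in\cox(Z)_{[E^{x_0}]+[E^{x_\infty}]}$ is $G$-invariant, hence by \cite[2.8.5]{avezier_EqCoxRings} a monomial in the $r_{ij}$; thus there is always a relation
\[
[E^{x_0}]+[E^{x_\infty}]=\sum_{i,j} a_{ij}\,[X^{x_i}_j]\quad\text{in }\clg(Z),\qquad a_{ij}\in\ZZ_{\geq 0}.
\]
Your $\psi$ sends the left side to $1+1=2$ and the right side to $0$, so $\psi$ can only be well defined if all $a_{ij}=0$, i.e.\ if $E^{x_0}+E^{x_\infty}$ is principal. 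But that is precisely the first of the divisors whose non-principality you are trying to establish; in other words, the existence of $\psi$ would \emph{already assume} the conclusion for the pair $(0,\infty)$, and in fact for every pair simultaneously. One sees the same failure from your proposed check: with the values from Appendix~\ref{SecAppendixCyclic} one has $l_{E^{x_0}}=l_{E^{x_\infty}}=-\tfrac{\bar n-1}{2}$, $l_{E^{x_i}}=0$ for $i\geq 1$, and (contrary to your parenthetical) the distinguished parametric color $D^{x_d}$ has $\Ee$-coordinate $1$; using $[D^{x_d}]=[\pi^*(x_0)]$ and hence $\psi([D^{x_d}])=\bar n$, one gets $\psi\bigl(\sum_D l_D[D]\bigr)=-(\bar n-1)+\bar n=1\neq 0$. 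So the ``delicate verification'' fails. (The same issue is already present in your $n\leq 2$ paragraph: the invariant piece is an $r$-monomial, but you still need it to be non-constant, i.e.\ $[E^{x_k}]+[E^{x_l}]\neq 0$.)

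The paper therefore proceeds differently at this point: it translates $\mathbf{a}_{kl,m\omega}\neq 0$ into the statement that the divisor $E^{x_k}+E^{x_l}-c_0E^{x_0}-c_\infty E^{x_\infty}$ (with $(c_0,c_\infty)$ read off from the table) is not principal, and checks this directly, case by case, by writing down the general $B$-semi-invariant rational function and comparing coefficients in the hyperspace. It is exactly in this computation that the two hypotheses (three or more exceptional points, normal special fiber) are used to rule out the would-be principal divisors.
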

\begin{proof}
In view of the table above, it suffices to show that the total degrees of the monomials $\mathbf{r}^{\mathbf{a}_{kl,m\omega}}$ are non-zero. This is equivalent to prove that the divisors
\begin{itemize}
\item $E^{x_0}+E^{x_\infty}$,
\item $E^{x_0}+E^{x_i}-(\bar{n}-1)E^{x_\infty}$, $i=1,...,r$,
\item $E^{x_\infty}+E^{x_i}-(\bar{n}-1)E^{x_0}$, $i=1,...,r$,
\item $2E^{x_i}-(\bar{n}-2p)(E^{x_0}+E^{x_\infty})$, $1\leq i\leq r$, $p=1,...,\lfloor \frac{\bar{n}}{2}\rfloor$,
\item $E^{x_i}+E^{x_j}-(\bar{n}-p)(E^{x_0}+E^{x_\infty})$, $1\leq i<j\leq r$, $p=1,...,\bar{n}$.
\end{itemize}
are not principal. We verify this for the divisor $E^{x_1}+E^{x_2}-k(E^{x_0}+E^{x_\infty})$, $0\leq k\leq \bar{n}-1$, where $\bar{n}=n$ is assumed to be odd. We skip the proof for the other cases which are treated similarly. We look for a $B$-semi-invariant rational function $gf^\gamma_\omega$, $\gamma\in\ZZ$, where $f_\omega$ is defined in \ref{SecAppendixCyclic}, and $g\in k(g_3^n/g_4^n)^*$, whose divisor is the above one. Necessarily, zeroes and poles of $g$ are located on the points $x_0,x_\infty,x_1,x_2,x_d$, and their orders are respectively $\alpha,\alpha,1,1,\beta$. In order to simplify the notation, we suppose that for each of the points $x_0,x_\infty,x_1,x_2$, there is exactly one $G$-stable prime divisor sent to this point, the general case being treated the same way. Coordinates on the hyperspace are defined as in Section \ref{SecAppendixCyclic}, and we set $v_{X^{x_i}}=(x_i,h_i,l_i)$, $i\in\lbrace 0,\infty,1,2\rbrace$. This gives
\begin{multline*}
\divi(gf^\gamma_\omega)=\alpha(n(E^{x_0}+E^{x_\infty})+h_0X^{x_0}+h_\infty X^{x_\infty})+E^{x_1}+h_1X^{x_1}+E^{x_2}+h_2X^{x_2}+\beta D^{x_d}\\+\gamma(D^{x_d}-\frac{n-1}{2}(E^{x_0}+E^{x_\infty})+l_0X^{x_0}+l_\infty X^{x_\infty}+l_1X^{x_1}+l_2X^{x_2}).
\end{multline*}
It follows that we must have $\beta=-\gamma$, and $\alpha n-\gamma\frac{n-1}{2}=-k$, the general solution of this last equation being of the form
\begin{center}
$(\alpha,\gamma)=(-k,-2k)+u(\frac{n-1}{2},n)$, $u\in\ZZ$.
\end{center}
Recall that we must have $2\alpha+2+\beta=0$, so that $u=2$. Now, this principal divisor reads
\begin{center}
$\divi(gf^\gamma_\omega)=E^{x_1}+E^{x_2}-k(E^{x_0}+E^{x_\infty})+(\alpha h_0+\gamma l_0)X^{x_0}+(\alpha h_\infty+\gamma l_\infty)X^{x_\infty}+(h_1+\gamma l_1)X^{x_1}+(h_2+\gamma l_2)X^{x_2}$.
\end{center}
It follows that we must impose $\frac{l_1}{h_1}=-\frac{1}{\gamma}=-\frac{1}{2(n-k)}$. This condition cannot be satisfied as soon as $k<n-1$, because we must have $\frac{l_1}{h_1}\leq -\frac{1}{2}$. Thus, we suppose that $k=n-1$, and we look at the condition $\frac{l_0}{h_0}=-\frac{\alpha}{\gamma}=0$ which cannot be satisfied.
\end{proof}

\begin{lem}\label{Lem_FonctionsConstants_3exceptionalpoints}
Let $Z$ be a normal $G/\mu_n$-embedding ($n\geq 1$) whose associated special fiber is a normal variety and with three or more exceptional points. Then, $\Oo(Z)\simeq k$.
\end{lem}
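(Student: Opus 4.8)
The plan is to reduce the statement to the assertion that $\Oo(Z)^U=k$, and then to argue directly with the general form \eqref{Eq_GeneralForm_PrincipalDiv} of a $B$-semi-invariant principal divisor, exactly in the spirit of Lemma \ref{Lem_BSemiInv_NotMax}. For the reduction, recall that $\Oo(Z)$ is a rational $\SL_2$-module and that each simple module $V_m$ has a one-dimensional space of $U$-invariants; hence $\dim_k\Oo(Z)^U$ counts the simple summands of $\Oo(Z)$, and since the summand containing the constants is the trivial one, $\Oo(Z)^U=k$ forces $\Oo(Z)=k$. (Here there is no issue with units: as $G$ is semisimple with a dense orbit, $\Oo(Z)^{*}=k^{*}$.) Since $\Oo(Z)^U$ is a $T$-module, any nonconstant element has a nonconstant homogeneous, i.e. $B$-semi-invariant, component; so it suffices to rule out a nonconstant $f\in\Oo(Z)^{(B)}$. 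Its $B$-weight is dominant, hence of the form $k\alpha_0\omega$ with $k\geq 0$, and using the chosen splitting I would write $f=g\,f_{\alpha_0\omega}^{\,k}$ with $g\in k(\PP^1_k)^{*}$.

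Next I would analyze the effectivity $\divi(f)\geq 0$ coefficientwise via \eqref{Eq_GeneralForm_PrincipalDiv}. A parametric $B$-stable divisor is sent to $\varepsilon_x=(x,1,0)$, so its $\Ee$-coordinate vanishes, $l_D=0$; consequently the coefficient of the parametric divisor over a point $z$ equals $v_z(g)$, forcing $v_z(g)\geq 0$ there. Thus $g$ may only have poles at the exceptional points. The key point is that \emph{every} exceptional point carries a $G$-stable prime divisor $X^{x}_j$: for the exceptional points of the embedding this holds by definition, while for the two $\mu_n$-fixed points $x_0,x_\infty$ (case $n\geq 3$) it follows from the normality of the special fiber combined with the hypothesis of three or more exceptional points, via Proposition \ref{PropSpecialFiberCyclic} (for $n\leq 2$ the special fiber is automatically normal by Proposition \ref{PropSpecialFiberTrivial}, and each of the $\geq 3$ exceptional points again carries such a divisor). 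Since a $G$-stable prime divisor has strictly negative $\Ee$-coordinate $l_{X^{x}_j}<0$ while $h_{xj}>0$, the inequality $v_x(g)\,h_{xj}+k\,l_{X^{x}_j}\geq 0$ yields $v_x(g)\geq -k\,l_{X^{x}_j}/h_{xj}$, which is $>0$ whenever $k>0$.

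Combining these gives the contradiction. If $k>0$, then $v_z(g)\geq 0$ at every point of $\PP^1_k$ (so $g$ has no poles) while $v_x(g)>0$ at any exceptional point, whence $\deg\divi_{\PP^1_k}(g)=\sum_z v_z(g)>0$, contradicting that a principal divisor on $\PP^1_k$ has degree zero. If $k=0$, then $f=\pi^{*}g$ and $\divi(\pi^{*}g)=\sum_z v_z(g)\,\pi^{*}(z)\geq 0$ with all $\pi^{*}(z)\geq 0$ forces $v_z(g)\geq 0$ everywhere, so the degree-zero function $g$ is constant. In either case $f$ is constant, proving $\Oo(Z)^U=k$ and hence $\Oo(Z)\simeq k$. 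I expect the main obstacle to be the careful justification of the two sign facts about $\Ee$-coordinates — that $l_D=0$ for parametric divisors and $l_{X^{x}_j}<0$ for $G$-stable ones — together with the precise invocation of Proposition \ref{PropSpecialFiberCyclic} to secure $G$-stable divisors over the fixed points; this rests on the explicit description of the colored hyperspace of $\SL_2/\mu_n$ in the Appendix, which is the same combinatorial input underlying Lemma \ref{Lem_BSemiInv_NotMax}. (Alternatively, one may phrase the conclusion as the strong convexity of the cone spanned by the classes of $B$-stable prime divisors in $\clg(Z)_{\QQ}$, which is precisely the form in which the lemma is used in Proposition \ref{Prop_RelationCoxCyclic}.)
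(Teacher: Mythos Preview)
Your approach is essentially the paper's own --- both arguments are, at bottom, the degree-zero identity $\sum_{z\in\PP^1_k} v_z(g)=0$ combined with lower bounds on each $v_z(g)$ coming from effectivity of $\divi(f)$ along one $B$-stable prime divisor over each point. The paper phrases this as ``summing the inequalities $v_D(u)\geq 0$'' over a set consisting of one divisor per point; you phrase it as a direct degree count. Your shortcut $k\geq 0$ from dominance of the $B$-weight replaces the paper's first sum over all colors, which is a genuine simplification.

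There is, however, a real gap. Your claim that every parametric color satisfies $l_D=0$ is false: with the splitting fixed in Section~\ref{SecAppendixCyclic}, the distinguished color $D^{x_d}$ has $l_{D^{x_d}}>0$ (equal to $1$ in the Appendix's normalization). This is precisely the ``$+1$'' appearing in the paper's inequality $m\bigl(\tfrac{l_0}{h_0}+\tfrac{l_\infty}{h_\infty}+\tfrac{l_1}{h_1}+1\bigr)\geq 0$. Consequently the effectivity bound at $x_d$ only gives $v_{x_d}(g)\geq -k$, so $g$ may have a pole there, and your contradiction ``$\sum_z v_z(g)>0$'' does not follow as written. No choice of splitting removes this: some parametric color must carry the nonzero $\Ee$-coordinate of $f_{\alpha_0\omega}$.

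The fix is immediate once you track $x_d$ explicitly. At each of the (at least three) exceptional points the $G$-stable divisor satisfies $l/h\leq -\tfrac12$ (Appendix~\ref{SecAppendixCyclic}), so effectivity gives $v_x(g)\geq k/2$ there; at all other non-distinguished points $v_z(g)\geq 0$; at $x_d$ you get $v_{x_d}(g)\geq -k$. Summing,
\[
0=\sum_{z} v_z(g)\;\geq\; -k + 3\cdot\frac{k}{2} \;=\; \frac{k}{2},
\]
forcing $k\leq 0$, hence $k=0$ and then $g$ constant as you argue. This is exactly the content of the paper's second sum, so after this correction the two proofs coincide.
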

\begin{proof}
It suffices to prove that every $B$-semi-invariant regular function $u\in\Oo(X)$ is necessarily constant. Let $\alpha:=1$ if $n$ is odd, and $\alpha:=2$ otherwise. We can write $u=gf_{\alpha\omega}^m$, where $g\in k(g_3^{\bar{n}}/g_4^{\bar{n}})^*$, and $f_{\alpha\omega}$ is defined in Section \ref{SecAppendixCyclic}. Moreover the divisor of $u$ is effective by assumption, so that we have $v_D(u)\geq 0$, $\forall D\in\wdiv(X)^B$. By summing these inequalities over the set of colors we obtain the inequality
\begin{center}
$m(-\frac{1}{2}+\frac{1}{2{\bar{n}}}-\frac{1}{2}+\frac{1}{2{\bar{n}}}+1)\geq 0$,
\end{center}
which implies that $m\geq 0$. In view of Proposition \ref{PropSpecialFiberCyclic}, $Z$ admits at least three $G$-stable prime divisors $X^{x_0},X^{x_\infty},X^{x_1}$ sent to pairwise distinct exceptional points $x_0,x_\infty,x_1$. Consider the set of $B$-stable prime divisors consisting of $X^{x_0},X^{x_\infty},X^{x_1}$, and all the colors except $E^{x_0},E^{x_\infty}$, and $E^{x_1}$. By summing the inequalities over this set we obtain the inequality
\begin{center}
$m(\frac{l_0}{h_0}+\frac{l_\infty}{h_\infty}+\frac{l_1}{h_1}+1)\geq 0$,
\end{center}
where $(x_0,l_0,h_0),(x_\infty,l_\infty,h_\infty)$, and $(x_1,l_1,h_1)$ are the respective coordinates in the hyperspace associated with $X$ of the valuations $v_{X^{x_0}},v_{X^{x_\infty}}$ and $v_{X^{x_1}}$. Because $\frac{l_0}{h_0}+\frac{l_\infty}{h_\infty}+\frac{l_1}{h_1}+1<0$ (see \ref{SecAppendixCyclic}), the above inequality is equivalent to $m\leq 0$, and we obtain that $m=0$. Thus, we have that $g\in k^*$ because the divisor of $g$ has to be effective. Indeed, any non-constant rational function on $\PP^1_k/\mu_n$ admits at least one pole. We conclude that $u$ is constant.
\end{proof}

\begin{rem}
Suppose that $n\leq 2$. From the above description of $\cox(X)$, we have
\begin{center}
$\tilde{X}$ is a complete intersection $\iff$ $\sharp(x_i)_i\leq 2$ $\iff$ $\tilde{X}$ is a hypersurface.
\end{center}
Furthermore these equivalent conditions characterize when the good quotient morphism $\tilde{X}\xrightarrow{//G}\AAA^N$ is faithfully flat. Indeed, this morphism has equidimensional fibers if and only if $\sharp(x_i)_i\leq 2$ (see Section \ref{PropSpecialFiberTrivial}). Moreover, $\tilde{X}$ is Cohen-Macaulay under these assumptions, so that the quotient morphism is flat (\cite[III, Ex 10.9]{Hartshorne}).
\end{rem}

\begin{ex}
Consider a normal $G$-embedding $X$ with four exceptional points 
\begin{center}
$x_1=[1:0],x_2=[0:1],x_3=[1:1],x_4=[2:1]\in\PP^1_k$,
\end{center}
to which are sent the exceptional divisors
\begin{center}
$E^{x_1}$, $X^{x_1}$, $E^{x_2}$, $X^{x_2}$, $E^{x_3}$, $X^{x_3}$, $E^{x_4}$, and $X^{x_4}$.
\end{center}
We choose the section $\omega\mapsto f_\omega$ of the exact sequence
\begin{center}
$1\rightarrow k(\PP^1_k)^*\rightarrow k(X)^{(B)}\rightarrow\ZZ\omega\rightarrow 1$,
\end{center}
such that $\divi(f_\omega)$ is the color $E^{x_1}$ on $G$. This provides coordinates on $\breve{\Ee}$ for which we have
\begin{center}
$v_{X^{x_1}}=(x_1,2,-1), v_{X^{x_2}}=(x_2,3,-5),v_{X^{x_3}}=(x_3,1,-1)$, and $v_{X^{x_4}}=(x_4,5,-4)$.
\end{center}
By \ref{Prop_ClassGroup_GenRelations}, the generators $([E^{x_1}],[X^{x_1}],...,[E^{x_4}]$, $[X^{x_4}])$ of the class group give the following presentation matrix
\[
P:=\begin{bmatrix}
    -1 & -2 & 1 & 3 & 0 & 0 & 0 & 0\\
    -1 & -2 & 0 & 0 & 1 & 1 & 0 & 0\\
    -1 & -2 & 0 & 0 & 0 & 0 & 1 & 5\\
    1 & -1  & 0 & -5 &0 & -1& 0 & -4
    \end{bmatrix}.
\]
From this presentation of  $\clg(X)$ we obtain

\begin{multicols}{2}
\begin{itemize}[label=$\bullet$]
\item $[E^{x_1}]=[X^{x_1}]+5[X^{x_2}]+[X^{x_3}]+4[X^{x_4}]$
\item $[E^{x_2}]=3[X^{x_1}_1]+2[X^{x_2}]+[X^{x_3}]+4[X^{x_4}]$
\item $[E^{x_3}]=3[X^{x_1}_1]+5[X^{x_1}]+4[X^{x_4}]$
\item $[E^{x_4}]=3[X^{x_1}_1]+5[X^{x_1}]+[X^{x_3}]-[X^{x_4}]$
\end{itemize}
\end{multicols}
\noindent Finally, we find
\begin{center}
$\cox(X)=k[s_1,t_1,s_2,t_2,s_3,t_3,s_4,t_4, r_1,r_2,r_3,r_4]$,
\end{center}
with ideal of relations generated by the following

\begin{multicols}{3}
\begin{itemize}[label=$\bullet$]
\item $s_1t_2-s_2t_1-r^4_1r^7_2r^2_3r^{8}_4$
\item $s_1t_3-s_3t_1-r^4_1r^{10}_2r_3r^{8}_4$
\item $s_1t_4-s_4t_1-r^4_1r^{10}_2r^2_3r^{3}_4$
\item $s_2t_3-s_3t_2+r^6_1r^7_2r_3r^{8}_4$
\item $s_2t_4-s_4t_2+2r^6_1r^7_2r^2_3r^{3}_4$
\item $s_3t_4-s_4t_3+r^6_1r^{10}_2r_3r^{3}_4$
\item $s_2r_2^3+s_1r_1^2-s_3r_3$
\item $t_2r_2^3+t_1r_1^2-t_3r_3$
\item $s_2r_2^3+2s_1r_1^2-s_4r_4^5$
\item $t_2r_2^3+2t_1r_1^2-t_4r_4^5$
\end{itemize}
\end{multicols}
\end{ex}

\begin{ex}
Consider a normal $G/\mu_3$-embedding $X$ with three pairwise distinct exceptional points $x_0=[0:1],x_\infty=[1:0],x_1=[\alpha:\beta]\in\PP^1_k$, to which are sent the following exceptional divisors:
\begin{center}
$E^{x_0}$, $X^{x_0}$, $E^{x_\infty}$, $X^{x_\infty}$, $E^{x_1}$, and $X^{x_1}$.
\end{center}
We choose coordinates on the hyperspace as described in Section \ref{SecAppendixCyclic}, and the $G$-valuations associated to the $G$-invariant exceptional divisors have the following coordinates in $\breve{\Ee}$:
\begin{center}
$v_{X^{x_0}}=(x_0,1,-1), v_{X^{x_\infty}}=(x_\infty,1,-1)$, and $v_{X^{x_1}}=(x_1,1,-1)$.
\end{center}
The class group of $X$ is free of rank $3$ and the special fiber is a normal variety, so the preceding proposition applies. We consider the following $k$-bases for the simple $G$-modules generating $\cox(X)$:
\begin{itemize}
\item $V_{E^{x_0}}:(s_0,t_0):=(s_0,g_1e^{[E^{x_0}]})$,
\item $V_{E^{x_\infty}}:(s_\infty,t_\infty):=(s_\infty,g_2e^{[E^{x_\infty}]})$,
\item $V_{E^{x_1}}:(s_1,t_1,u_1,v_1):=(s_1,(\beta_1 g_3^2g_1-\alpha_1g_4^2g_2)e^{[E^{x_1}]},(\beta_1 g_3g_1^2-\alpha_1g_4g_2^2)e^{[E^{x_1}]},(\beta_1 g_1^3-\alpha_1g_2^3)e^{[E^{x_1}]})$,
\item $V_{X^{x_i}}$, $i=0,\infty,1:(r_{i}):=(e^{[X^{x_i}]})$.
\end{itemize}
We have
\begin{center}
$\cox(X)\simeq\sym_k(V_{E^{x_0}}\oplus V_{E^{x_\infty}}\oplus V_{E^{x_1}})\otimes_k k[r_0,r_\infty,r_1] \mod I$,
\end{center}
where $k[r_0,r_\infty,r_1]$ is a polynomial $k$-algebra, and $I$ is generated by the simple $G$-modules of relations given in the following table:
\begin{center}
\begin{tabular}{|c|c|c|} 
 \hline
 $G$-module & $B$-semi-invariant & $B$-weight\\ 
 \hline\hline
  $V_0\simeq M_{0\infty}\subset V_{E^{x_0}}\otimes_k V_{E^{x_\infty}}\simeq V_0\oplus V_2$ & $t_0 s_\infty-s_0 t_\infty-r_0r_\infty r_1^2$ & 0\\
 \hline
  $V_2\simeq M_{01}\subset V_{E^{x_0}}\otimes_k V_{E^{x_1}}\simeq V_4\oplus V_2$ & $t_1 s_0-s_1 t_0-\alpha s_\infty^2\otimes r_0r_\infty^2 r_1$ & $2\omega$\\
 \hline
  $V_2\simeq M_{\infty 1}\subset V_{E^{x_\infty}}\otimes_k V_{E^{x_1}}\simeq V_4\oplus V_2$& $t_1 s_\infty-s_1 t_\infty-\beta s_0^2\otimes r_0^2r_\infty r_1$ & $2\omega$\\
  \hline
  $V_2\simeq M_{11}\subset \sym_k^2(V_{E^{x_1}})\simeq V_6\oplus V_2$ & $t_1^2-s_1u_1-\alpha\beta s_0s_\infty\otimes r_0^3r_\infty^3 r_1^2$ & $2\omega$\\
 \hline
   $N_1\simeq V_3$ & $\beta s_0^3\otimes r_0- \alpha s_\infty^3\otimes r_{\infty}-s_1\otimes r_1$ & $3\omega$\\
 \hline
\end{tabular}
\end{center}
\end{ex}

\subsubsection{Example of affine almost homogeneous $G$-threefolds}
\label{SecSL2VarAffine}

Consider the case where $X$ is affine. In \cite{Popov}, Popov classifies these varieties up to isomorphism by means of numerical invariants. These results can be reinterpreted in term of the combinatorial description of these varieties. Indeed, the colored hypercone defining $X$ must have all the colors among its generators, which is only possible if $F\simeq \mu_n$ (\cite[5.2]{TimashevClassification}). We use coordinates on the hyperspace as defined in Section \ref{SecAppendixCyclic}, and define $u:=2$ if $n$ is even, and $u:=1$ otherwise. The combinatorial description of the normal affine $G/\mu_n$-embeddings shows that $X$ admits a unique $G$-orbit of codimension $1$ corresponding to a $G$-stable prime divisor $X^{x_0}$, which is sent to $x_0$ by the rational quotient $\pi:X\dashrightarrow \PP^1_k$ by $B$. Moreover, the condition
\begin{center}
$v_{X^{x_0}}=(x_0,h,l)\in\PP^1_k\times\ZZ_{>0}\times \frac{1}{u}\ZZ$, with $-\frac{1}{2}-\frac{1}{2\bar{n}}<\frac{l}{h}\leq-\frac{1}{2}$, and $h\wedge ul=1$
\end{center}
is satisfied. Remark that if $n\leq 2$, we can still choose two distinct points $x_0, x_\infty\in\PP^1_k$ which play the same role as in the case $n\geq 3$. In general, $X$ admits a $G$-fixed point, except in the case where $l/h=-1/2$. Concretely, this exceptional embedding is realized as the $G$-linearized line bundle
\begin{center}
$G\times^{T}\AAA^1_k\rightarrow G/T$,
\end{center}
associated to the $T$-action on $\AAA^1_k$ defined by the character $\omega^n$. The generators $([E^{x_d}],[E^{x_0}],[X^{x_0}],[E^{x_\infty}])$ of the class group give the presentation matrix
\[
P:=\begin{bmatrix}
    -1 & \bar{n} & h & 0\\
    -1 & 0 & 0 & \bar{n} \\
    u & -u\frac{\bar{n}-1}{2} & ul & -u\frac{\bar{n}-1}{2}
    \end{bmatrix}.
\]
Transforming this matrix to its Smith normal form by means of elementary operations on the rows and columns yields an isomorphism

\begin{equation}\label{EqIsoClassGroup}
\clg(X)\simeq \ZZ\times \ZZ/d\ZZ,
\end{equation}
where $d:=n\wedge h$ if $n$ is odd or $h+2l$ is even, and $d:=(\bar{n}+h)\wedge (\bar{n}-h)$ otherwise. Panyushev computed this class group in \cite[Thm 2]{Panyushev} taking as input Popov's numerical invariants.

By Proposition \ref{Prop_GenRel_Cox_Mun_rEquals0}, the Cox ring is generated by the elements $s_0, t_0, s_\infty, t_\infty, r_0$, and the relations are generated by a relation of the form
\begin{center}
$r_0^m=s_\infty t_0-s_0t_\infty$.
\end{center}
On the other hand, computing $L_1+L_2+\frac{2}{u}L_3$ on the rows of $P$, we obtain
\begin{center}
$[E^{x_0}]+[E^{x_\infty}]=-(h+2l)[X^{x_0}]$.
\end{center}
It follows that the sought relation is $r_0^{-(h+2l)}=s_\infty t_0-s_0t_\infty$. This presentation of the Cox ring is similar to the one obtained by Batyrev and Haddad in \cite{Haddad}.

\subsubsection{Comparison with the results of Batyrev and Haddad}

In \cite{Popov}, Popov associates to the isomophism class of $X$ a unique pair $(h_P,n)$ where $h_P\in ]0,1]$ is a rational number called the \textit{height} of $X$, and $n$ is the order of the cyclic group stabilizing a point in the open orbit. With \cite[III.4.3]{KraftBook}, the height has the following useful interpretation: the algebra of $U$-invariant regular functions on $X$ identifies with the algebra of the monoid
\begin{center}
$M_{h,n}=\lbrace (i,j)\in\ZZ^2_{>0}$ $; j\leq h_Pi$ and $n|i-j \rbrace$.
\end{center}
More precisely, by considering the injective graded morphism
\begin{center}
$\varphi:\Oo(X)^U\hookrightarrow \Oo(G/\mu_n)^U=\bigoplus_{i,j\geq0,n|i-j}\vect_k(g_3^ig_4^j)$
\end{center}
given by the restriction of functions, we identify elements of the monoid $M_{h,n}$ with the monomials in the image of $\varphi$. As in the preceding section, consider the $G$-valuation $v_{X^{x_0}}=(x_0,h,l)\in\breve{\Ee}$. By normality of $X$, a monomial $g_3^ig_4^j\in\Oo(G/\mu_n)^U$ belongs to $\Oo(X)^U$ if and only if
\begin{align*}
  v_{X^{x_0}}(g_3^ig_4^j)\geq 0 & \iff v_{X^{x_0}}(\frac{g_3^ig_4^j}{f_{u\omega}^{(i+j)/u}}f_{u\omega}^{(i+j)/u}) \geq 0\\
    	& \iff hv_{x_0}(\frac{g_3^ig_4^j}{f_{u\omega}^{(i+j)/u}}) +l(i+j)\geq 0\\
    & \iff \frac{h}{\bar{n}}(i+u\frac{\bar{n}-1}{2}\frac{i+j}{u}) +l(i+j)\geq 0\\
    & \iff j\leq\alpha i, \textrm{ where } \alpha:= \frac{h(\bar{n}+1)+2l\bar{n}}{h(1-\bar{n})-2l\bar{n}}.
\end{align*}
Remark that together with the condition $-\frac{1}{2}-\frac{1}{2\bar{n}}<\frac{l}{h}\leq-\frac{1}{2}$, we verify that $0<\alpha\leq 1$, with $\alpha=1$ exactly when $l/h=-1/2$. By unicity of the cone in $\ZZ^2$ defined  by the monomials of $\Oo(X)^U$, we have $\alpha=h_P$.

In \cite{Haddad}, Batyrev and Haddad consider in the affine space $\AAA^5_k$ endowed with coordinates $y,t_1,t_2,t_3,t_4$ the hypersurface $H_b$ defined by the equation
\begin{center}
$y^b=t_1t_4-t_2t_3$,
\end{center}
where $b:=\frac{q-p}{k}$, $k:=(q-p)\wedge n$, and $p/q=h_P$ with $p\wedge q=1$. Then, they set $a:=\frac{n}{k}$, and let $\GG_m\times\mu_a$ act on $\AAA^5_k$ by allocating the following weights to coordinates
\begin{center}
$\deg(y)=(k,0), \deg(t_1)=(-p,-1), \deg(t_2)=(-p,-1), \deg(t_3)=(q,1)$, and $\deg(t_4)=(q,1)$.
\end{center}
This action stabilizes $H_b$ and realizes it as the total coordinate space of $X$ (\cite[2.6]{Haddad}).

We now verify that this coincides with the presentation of the Cox ring given in the last section. For simplicity, we assume that $n$ is odd. By identification, we have
$$
\begin{cases}
p=\frac{1}{2(n\wedge h)}(h(n+1)+2nl)\\
q=\frac{1}{2(n\wedge h)}(h(1-n)-2nl)
\end{cases}.
$$
Then, the following identity holds
\begin{align*}
  b=\frac{q-p}{k} & = \frac{-(h+2l)n}{(n\wedge h)((q-p)\wedge n)}\\
    	& =\frac{-(h+2l)n}{(n\wedge h)(\frac{-n(h+2l)}{n\wedge h}\wedge n)}\\
    & = \frac{-(h+2l)n}{n(-(h+2l)\wedge n\wedge h)}\\
    & = \frac{-(h+2l)}{-2l\wedge n\wedge h}\\
    & =-(h+2l).
\end{align*}
Turning to the grading of the Cox ring, recall that the isomorphism
\begin{center}
$\clg(X)\simeq \ZZ\times \ZZ/(n\wedge h)\ZZ$
\end{center}
has been obtained by considering the four generators $[E^{x_d}], [E^{x_0}], [X^{x_0}], [E^{x_\infty}]$ of $\clg(X)$, and looking for a $\ZZ$-basis of the free $\ZZ$-module on these generators adapted to the submodule of relations. Let $u,v\in \ZZ$ be such that
\begin{center}
$-qu+kv=1$.
\end{center}
By the above isomorphism, the elements $[E^{x_0}], [X^{x_0}]$, and $[E^{x_\infty}]$ are sent respectively to $(-p,ub-v),(k,u)$, and $(q,v)$, where the second coordinate is computed in $\ZZ/(n\wedge h)\ZZ$. This gives us the corresponding degrees for the generators of the Cox ring.
This degrees correspond to the degrees of Batyrev and Haddad up to an automorphism $f$ of $\ZZ\times \ZZ/(n\wedge h)\ZZ$. Indeed, it suffices to set $f((1,0))=(1,k^{-1}u)$ and $f((0,1))=(0,k^{-1})$.

\section{Appendix: Colored equipment of $k(G/F)$}

\label{SecAppendix}

In this appendix, we recall from \cite{TimashevClassification} the colored equipment of $k(G/F)$ for $F$ either cyclic or one of the binary polyhedral groups in $G=\SL_2$. There is a natural bijection between the monoid of effective $B$-stable divisors in $G/F$, and the monoid $\Oo(G)^{(B\times F)}/k^*$. Indeed, it suffices to consider the well-defined morphism
\begin{center}
$\Oo(G)^{(B\times F)}/k^*\rightarrow\wdiv(X)^B, \bar{f}\mapsto\divi_{G/F}(f)$.
\end{center}
Via this bijection, $B$-stable prime divisors in $G/F$ correspond to indecomposable elements of $\Oo(G)^{(B\times F)}/k^*$. Remark that we can again see these elements as indecomposable homogeneous elements of $\cox(G/F)^{(B)}/k^*$. There exists a linear system $\Oo(G)^{(B\times F)}_{(n_0\omega,\lambda_0)}$ of dimension two over $k$ that defines the morphism $\pi_{|G/F}$, and a non-empty open subset of $\PP(\Oo(G)^{(B\times F)}_{(n_0\omega,\lambda_0)})\simeq \PP^1_k$ that gives the indecomposable elements of $\Oo(G)^{(B\times F)}/k^*$ corresponding to the parametric colors of $G/F$. The finite set of exceptional colors of $G/F$ is obtained by taking divisors of zeroes of the non-invertible $\clg(G/F)$-prime elements appearing in the decompositions of the decomposable elements of $\Oo(G)^{(B\times F)}_{(n_0\omega,\lambda_0)}\subset \cox(G/F)$.

\begin{defn}\label{DefRegSemiInv}\cite[16.1]{TimashevBook}
The $\clg(G/F)$-prime elements of $\Oo(G)^{(B\times F)}_{(n_0\omega,\lambda_0)}\subset \cox(G/F)$ are called the \textit{regular semi-invariants}. The others elements $\Oo(G)^{(B\times F)}_{(n_0\omega,\lambda_0)}$ are called the \textit{exceptional semi-invariants}. The \textit{subregular semi-invariants} are the non-invertible $\clg(G/F)$-prime $B$-semi-invariants appearing in the decomposition of the exceptional semi-invariants as a product of $\clg(G/F)$-prime elements.
\end{defn}

Any $G$-valuation or color $v$ of $k(G/F)$ is located in the hyperspace $\breve{\Ee}$ by a triple 
\begin{center}
$(x,h,l)\in \PP^1_k\times\QQ_+\times\QQ$,
\end{center}
where $x$ and $h$ are defined by the restriction of $v$ to $k(\PP^1_k)$, and the coordinate $l$ is obtained by evaluating $v$ at a fixed generator of the weight group $\Lambda(G/F)$ viewed in $k(G/F)^{(B)}$ via the choice of a section $\lambda\mapsto f_\lambda$ (\ref{SecCombinatorialMaterial}).

\subsection{$F$ is cyclic of order $n\geq 1$}
\label{SecAppendixCyclic}
We identify the character group of $F=\mu_n$ with $\ZZ/n\ZZ$. If $n\leq 2$, there is no exceptional semi-invariant. If $n\geq 3$, there are, up to non-zero scalar multiple, two subregular semi-invariants $g_3$ and $g_4$ whose respective weights are $(\omega,1\mod n),(\omega, -1\mod n)$. Let
$$\bar{n}=
\begin{cases}
n,\quad\quad n \textrm{ odd},\\
n/2,\quad n \textrm{ even}.
\end{cases}
$$
We have $\Oo(G)^{(B\times \mu_n)}_{(n_0\omega,\lambda_0)}=\Oo(G)^{(B\times \mu_n)}_{(\bar{n}\omega, \bar{n}\mod n)}$ generated by the two exceptional semi-invariant $g_3^{\bar{n}}$ and $g_4^{\bar{n}}$. There are two exceptional points $x_0, x_\infty$ and two exceptional colors $\pi^*(x_0)=\bar{n}E^{x_0}$, $\pi^*(x_\infty)=\bar{n}E^{x_\infty}$. Let $A$ be an arbitrary regular semi-invariant, and $x_d$ be the corresponding \textit{distinguished point} of $\PP^1_k/\mu_n\simeq\PP^1_k$. We define a section of $\Lambda(G/\mu_n)\rightarrow k(G/\mu_n)^{(B)}$ by the choice of the generator 
$$
\begin{cases}
f_\omega:=\frac{A}{(g_3g_4)^{(n-1)/2}}, \textrm{ of weight }\omega,\quad\quad (n \textrm{ odd)},\\
f_{2\omega}:=\frac{A^2}{(g_3g_4)^{(n/2-1)}}, \textrm{ of weight }2\omega,\,\quad (n \textrm{ even)}.
\end{cases}
$$
In order to have a uniform description of the hyperspace, the third coordinate $l_v$ of a $G$-valuation (or color) $v$ is defined by $l_v:=v(f_\omega)$ if $n$ is odd, and $l_v:=v(f_{2\omega})/2$ if $n$ is even. The elements of $\Vv_x\subset\Ee_x$ are the vectors $(x,h,l)$ in $\breve{\Ee}$ whose coordinates satisfy the inequalities $2l+h\leq 0$ for $x\neq x_d$ and $2l-h\leq 0$ for $x= x_d$. The colors are sent to the vectors $\varepsilon_x$ for $x\neq x_d,x_0,x_\infty$, to $(x_d,1,1)$ for $x_d$, and to $(x_0, \bar{n}, -(\bar{n}-1)/2)$ (resp. $(x_\infty, \bar{n}, -(\bar{n}-1)/2)$) for $x_0$ (resp. for $x_\infty$).

\subsection{$F$ is binary tetrahedral}\label{SecAppendixBinaryTetra}
The character group of $F_{\TT}$ identifies with the cyclic group of order $3$, for which we choose a generator $\zeta$. Up to a constant, there are three subregular semi-invariants $f_v,f_e,f_f$ whose respective weights are $(4\omega,\zeta),(6\omega, 1), (4\omega,\zeta^{-1})$. We have $\Oo(G)^{(B\times F_{\TT})}_{(n_0\omega,\lambda_0)}=\Oo(G)^{(B\times F_{\TT})}_{(12\omega, 1)}$ generated by the three exceptional semi-invariant $f_v^3,f_e^2,f_f^3$ with the relation $f_v^3+f_e^2+f_f^3=0$. This defines three exceptional points $x_v,x_e,x_f$, and three exceptional colors $\pi^*(x_v)=3E^{x_v}$, $\pi^*(x_e)=2E^{x_e}$, and $\pi^*(x_f)=3E^{x_f}$. We define a section of  $\Lambda(G/F_{\TT})\rightarrow k(G/F_{\TT})^{(B)}$ by the choice of the generator $f_{2\omega}:=\frac{f_vf_f}{f_e}$. The elements of  $\Vv_x\subset\Ee_x$ are the vectors $(x,h,l)$ of $\breve{\Ee}$ whose coordinates satisfy the inequalities $l+h\leq 0$ for $x\neq x_f, x_v$, and $l\leq 0$ for $x= x_f$ or $x=x_v$. The colors are sent to the vectors  $\varepsilon_x$ for $x\neq x_v,x_e,x_f$, and to $(x_v,3,1)$, $(x_e,2,-1)$, and $(x_f,3,1)$ for $\pi^{*}(x_v)$, $\pi^{*}(x_e)$ and $\pi^{*}(x_f)$.

\subsection{$F$ is binary octahedral}

The character group of $F_{\OO}$ identifies with the cyclic group of order $2$. Up to a constant, there are three subregular semi-invariants $f_v,f_e,f_f$ whose respective weights are $(8\omega,\zeta),(12\omega, 1), (6\omega,\zeta^{-1})$. We have $\Oo(G)^{(B\times F_{\OO})}_{(n_0\omega,\lambda_0)}=\Oo(G)^{(B\times F_{\OO})}_{(24\omega, 1)}$ generated by the three exceptional semi-invariant $f_v^3,f_e^2,f_f^4$ with the relation $f_v^3+f_e^2+f_f^4=0$. This defines three exceptional points $x_v,x_e,x_f$, and three exceptional colors $\pi^*(x_v)=3E^{x_v}$, $\pi^*(x_e)=2E^{x_e}$, and $\pi^*(x_f)=4E^{x_f}$. We define a section of  $\Lambda(G/F_{\OO})\rightarrow k(G/F_{\OO})^{(B)}$ by the choice of the generator $f_{2\omega}:=\frac{f_vf_f}{f_e}$. The elements of  $\Vv_x\subset\Ee_x$ are the vectors $(x,h,l)$ of $\breve{\Ee}$ whose coordinates satisfy the inequalities $l+h\leq 0$ for $x\neq x_f, x_v$, and $l\leq 0$ for $x= x_f$ or $x=x_v$. The colors are sent to the vectors  $\varepsilon_x$ for $x\neq x_v,x_e,x_f$, and to $(x_v,3,1)$, $(x_e,2,-1)$, and $(x_f,4,1)$ for $\pi^{*}(x_v)$, $\pi^{*}(x_e)$ and $\pi^{*}(x_f)$.

\subsection{$F$ is binary icosahedral}
\label{SecAppendixBinaryIcosa}
The character group of $F_{\II}$ is trivial. Up to a constant, there are three subregular semi-invariants $f_v,f_e,f_f$ whose respective weights are $(12\omega,\zeta),(30\omega, 1), (20\omega,\zeta^{-1})$. We have $\Oo(G)^{(B\times F_{\II})}_{(n_0\omega,\lambda_0)}=\Oo(G)^{(B\times F_{\II})}_{(60\omega, 1)}$ generated by the three exceptional semi-invariant $f_v^5,f_e^2,f_f^3$ with the relation $f_v^5+f_e^2+f_f^3=0$. This defines three exceptional points $x_v,x_e,x_f$, and three exceptional colors $\pi^*(x_v)=5E^{x_v}$, $\pi^*(x_e)=2E^{x_e}$, and $\pi^*(x_f)=3E^{x_f}$. We define a section of $\Lambda(G/F_{\II})\rightarrow k(G/F_{\II})^{(B)}$ by the choice of the generator $f_{2\omega}:=\frac{f_vf_f}{f_e}$. The elements of  $\Vv_x\subset\Ee_x$ are the vectors $(x,h,l)$ of $\breve{\Ee}$ whose coordinates satisfy the inequalities $l+h\leq 0$ for $x\neq x_f, x_v$, and $l\leq 0$ for $x= x_f$ or $x=x_v$. The colors are sent to the vectors $\varepsilon_x$ for $x\neq x_v,x_e,x_f$, and to $(x_v,5,1)$, $(x_e,2,-1)$, and $(x_f,3,1)$ for $\pi^{*}(x_v)$, $\pi^{*}(x_e)$ and $\pi^{*}(x_f)$.

\subsection{$F$ is binary dihedral of order $4n$, $n>1$}

The group $F_{\DD_{n}}$ is generated by the elements \[
h=\begin{bmatrix}
    \zeta & 0 \\
    0 & \zeta^{-1}
    \end{bmatrix} \textrm{ et }
   r= \begin{bmatrix}
    0 & -1 \\
    1 & 0
    \end{bmatrix},
\]
where $\zeta$ is a primitive $2n^{th}$ root of unity. A character of $F_{\DD_{n}}$ is determined by a pair of values $h\mapsto\zeta^k$, $r\mapsto i^l$, where $i^2=-1$. The subregular semi-invariants are $f_f=g_3g_4$, $f_e=g_3^n-(-ig_4)^n$, and $f_v=g_3^n+(ig_4)^n$ of respective weights $(2\omega,(1,-1)),(n\omega,(-1,-i^n))$, and $(n\omega,(-1,i^n))$. We have $\Oo(G)^{(B\times F_{\DD_{n}})}_{(n_0\omega,\lambda_0)}=\Oo(G)^{(B\times F_{\DD_{n}})}_{(2n\omega, (1,(-1)^n))}$ generated by the three exceptional semi-invariants $f_f^n,f_e^2,f_v^2$ with the relation $4(-i)^nf_f^n+f_e^2-f_v^2=0$. This defines three exceptional points $x_v,x_e,x_f$, and three exceptional colors $\pi^*(x_v)=2E^{x_v}$, $\pi^*(x_e)=2E^{x_e}$, and $\pi^*(x_f)=nE^{x_f}$. We define a section of $\Lambda(G/F_{\DD_{n}})\rightarrow k(G/F_{\DD_{n}})^{(B)}$ by the choice of the generator $f_{2\omega}:=\frac{f_vf_f}{f_e^{n-1}}$. The elements of  $\Vv_x\subset\Ee_x$ are the vectors $(x,h,l)$ of $\breve{\Ee}$ whose coordinates satisfy the inequalities $l+h\leq 0$ for $x\neq x_f, x_v$, and $l\leq 0$ for $x= x_f$ or $x=x_v$. The colors are sent to the vectors $\varepsilon_x$ for $x\neq x_v,x_e,x_f$, and to $(x_v,2,1)$, $(x_e,2,1)$, and $(x_f,n,1-n)$ for $\pi^{*}(x_v)$, $\pi^{*}(x_e)$ and $\pi^{*}(x_f)$.

\nocite{*}
\bibliographystyle{apa}
\bibliography{biblio/biblio}

\end{document}